\documentclass[12pt]{amsart}
\usepackage{amssymb,amsfonts,amsmath,amsopn,amstext,amscd,color,
latexsym,mathrsfs,stackrel,url,verbatim}
\usepackage[all]{xy}
\usepackage{mathabx}
\usepackage{hyperref}
\usepackage[utf8]{inputenc}

\input xy
\xyoption{all}

\usepackage[active]{srcltx}

\oddsidemargin0cm \evensidemargin0cm \textwidth16cm
\setlength{\parindent}{0cm}
\theoremstyle{remark}

\newtheorem{para}{\bf}[section]

\theoremstyle{definition}

\newtheorem{dfn}[para]{Definition}

\theoremstyle{plain}

\newtheorem{thm}[para]{Theorem}

\newtheorem{lemma}[para]{Lemma}
\newtheorem{cor}[para]{Corollary}
\newtheorem{prop}[para]{Proposition}

\newcommand{\vpi}{{\varpi}}

\newcommand{\Ga}{\Gamma}

\newcommand{\bbA}{{\mathbb A}}

\newcommand{\bbD}{{\mathbb D}}

\newcommand{\bbF}{{\mathbb F}}
\newcommand{\bbG}{{\mathbb G}}

\newcommand{\bbN}{{\mathbb N}}

\newcommand{\bbP}{{\mathbb P}}
\newcommand{\bbQ}{{\mathbb Q}}
\newcommand{\bbR}{{\mathbb R}}

\newcommand{\bbU}{{\mathbb U}}

\newcommand{\bbX}{{\mathbb X}}
\newcommand{\bbY}{{\mathbb Y}}
\newcommand{\bbZ}{{\mathbb Z}}

\newcommand{\bL}{{\bf L}}

\newcommand{\frg}{{\mathfrak g}}

\newcommand{\frn}{{\mathfrak n}}
\newcommand{\fro}{{\mathfrak o}}

\newcommand{\frt}{{\mathfrak t}}

\newcommand{\frU}{{\mathfrak U}}

\newcommand{\frX}{{\mathfrak X}}
\newcommand{\frY}{{\mathfrak Y}}

\newcommand{\cA}{{\mathcal A}}
\newcommand{\cB}{{\mathcal B}}

\newcommand{\cE}{{\mathcal E}}
\newcommand{\cF}{{\mathcal F}}
\newcommand{\cG}{{\mathcal G}}
\newcommand{\cH}{{\mathcal H}}
\newcommand{\cI}{{\mathcal I}}

\newcommand{\cL}{{\mathcal L}}
\newcommand{\cM}{{\mathcal M}}

\newcommand{\cO}{{\mathcal O}}
\newcommand{\cP}{{\mathcal P}}
\newcommand{\cQ}{{\mathcal Q}}

\newcommand{\cT}{{\mathcal T}}
\newcommand{\cU}{{\mathcal U}}

\newcommand{\cX}{{\mathcal X}}
\newcommand{\cY}{{\mathcal Y}}

\newcommand{\sD}{{\mathscr D}}

\newcommand{\Q}{{\mathbb Q}}

\newcommand{\ot}{\otimes}

\newcommand{\ocE}{\overline{\cE}}
\newcommand{\ocO}{\overline{\cO}}
\newcommand{\ocF}{\overline{\cF}}

\newcommand{\Hom}{{\rm Hom}}

\newcommand{\Lie}{{\rm{Lie}}}

\newcommand{\Spec}{{\rm Spec\,}}


\newcommand{\sta}{\stackrel}

\newcommand{\hra}{\hookrightarrow}
\newcommand{\hla}{\hookleftarrow}
\newcommand{\im}{{\rm im}}
\newcommand{\midc}{{\; | \;}}
\newcommand{\ord}{{\rm ord}}
\newcommand{\ra}{\rightarrow}
\newcommand{\la}{\leftarrow}

\newcommand{\osD}{{\overline{\sD}}} 
\newcommand{\hsD}{{\widehat{\sD}}} 

\newcommand{\hf}{{\hat{f}}}
\newcommand{\hg}{{\hat{g}}}

\newcommand{\hi}{{\hat{\imath}}}

\newcommand{\bul}{\bullet}

\newcommand{\car}{\stackrel{\simeq}{\longrightarrow}}

\newcommand{\varep}{\varepsilon}

\newcommand{\Lam}{\Lambda}
\newcommand{\rig}{\rightarrow}
\newcommand{\trig}{\twoheadrightarrow}
\newcommand{\thra}{\twoheadrightarrow}
\newcommand{\hrig}{\hookrightarrow}
\newcommand{\GG}{{\mathcal G}}\newcommand{\TT}{{\mathcal T}}\newcommand{\NN}{{\mathcal N}}


%


%

\newcommand{\uk}{\underline{k}}

\newcommand{\ut}{\underline{t}}

\newcommand{\uder}{\underline{\partial}}

\newcommand{\unu}{\underline{\nu}}





\pagestyle{myheadings} \markboth{Intermediate extensions and crystalline distribution algebras}{Intermediate extensions and crystalline distribution algebras}

\begin{document}
\title{Intermediate extensions and crystalline distribution algebras}
\author{Christine Huyghe}
\address{IRMA, Universit\'e de Strasbourg, 7 rue Ren\'e Descartes, 67084 Strasbourg cedex, France}
\email{huyghe@math.unistra.fr}
\author{Tobias Schmidt}
\address{IRMAR, Universit\'e de Rennes 1, Campus Beaulieu, 35042 Rennes cedex, France}
\email{Tobias.Schmidt@univ-rennes1.fr}

\thanks{}

\begin{abstract} Let $G$ be a connected split reductive group over a complete discrete valuation ring of mixed characteristic.
We use the theory of intermediate extensions due to Abe-Caro and arithmetic Beilinson-Bernstein localization to classify irreducible modules over the crystalline distribution algebra of $G$ in terms of overconvergent isocrystals on locally closed subspaces in the (formal) flag variety of $G$. We treat the case of ${\rm SL}_2$ as an example. 
\end{abstract}

\maketitle

\tableofcontents

\newpage 

\section{Introduction}
Let $\fro$ denote a complete discrete valuation ring of mixed characteristic $(0,p)$, with fraction field $K$ and perfect residue field $k$.
Let $G$ be a connected split reductive group over $\fro$ with $K$-Lie algebra $\frg=Lie(G)\otimes\Q$.

\vskip5pt

 In \cite{HS1} we have introduced and studied the crystalline distribution algebra $D^\dagger(\GG)$ associated to the $p$-adic completion $\GG$ of $G$. It is a certain weak completion of the classical universal enveloping algebra $U(\frg)$.
The interest in the algebra $D^\dagger(\GG)$ comes at least from two sources. On the one hand, it has the universal property to act as global arithmetic differential operators (in the sense of Berthelot \cite{BerthelotDI}) on any formal $\fro$-scheme which has a $\GG$-action. On the other hand, $D^\dagger(\GG)$ is canonically isomorphic to Emerton's analytic distribution algebra $D^{an}(G^\circ)$ as introduced in \cite{EmertonA}. Here, $G^{\circ}$ equals the rigid-analytic generic fibre of the formal completion of $G$ along its unit section. Analytic distribution algebras can be useful tools in the locally analytic representation theory of $p$-adic Lie groups, as we briefly recall. Let $G(K)$ be the group of $K$-valued points of $G$ and let $G(n)^{\circ}$ be the $n$-th rigid-analytic congruence subgroup of $G$ 
(with $G(0)^{\circ}=G^{\circ}$). Any irreducible admissible locally analytic $G(K)$-representation $V$ has an infinitesimal character and a level. The latter equals the least natural number $n\geq 0$ such that $V_{G(n)^{\circ}-{\rm an}}\neq 0$, i.e. such that $V$ contains a nonzero $G(n)^{\circ}$-analytic vector. 
The dual space $(V_{G(n)^{\circ}-{\rm an}})'$ is naturally a module over the ring $D^{an}(G(n)^\circ)$. Let ${\rm Rep}^n_{\theta}(G(K))$ be the full subcategory of admissible representations $V$ of character $\theta$, which are generated by their $G(n)^{\circ}$-analytic vectors. Let ${\rm Coh}(D^{an}(G(n)^\circ)_{\theta})$ be the category of coherent modules over the central reduction $D^{an}(G(n)^\circ)_{\theta}$. The formation $V\mapsto (V_{G(n)^{\circ}-{\rm an}})'$ is a faithful and exact functor 
$${\rm Rep}_{\theta}^n(G(K))\longrightarrow {\rm Coh}(D^{an}(G(n)^\circ)_{\theta}),\hskip5pt $$
which detects irreducibility: if the module $(V_{G(n)^{\circ}-{\rm an}})'$ is irreducible, then $V$ is an 
irreducible object in ${\rm Rep}^n_{\theta}(G(K))$. 

\vskip5pt

In this article, we only consider the simplest case: representations of level zero and with trivial infinitesimal character $\theta_0$. We then propose to study the irreducible modules over the ring $D^\dagger(\GG)_{\theta_0}$. Our approach will be geometric through some crystalline version of localization, 
similar to the classical procedure of localizing $U(\frg)$-modules. Recall that, in the classical setting of $U(\frg)$-modules, a combination of the Beilinson-Bernstein localization theorem over the flag variety of $\frg$ together with the formalism of intermediate extensions \cite{BB81,BK81,Hotta} produces a geometric classification of many irreducible modules, namely those 
 which localize to $D$-modules which are {\it holonomic}. 
 
 \vskip5pt
 
 Let in the following $B\subset G$ be a Borel subgroup scheme. In \cite{HS2} we have established an analogue of the Beilinson-Bernstein theorem for arithmetic differential operators on the formal completion $\cP$ of the flag scheme $P=G/B$ of $G$: one has a canonical isomorphism $H^0(\cP, \sD_{\cP}^{\dagger})\simeq D^\dagger(\GG)_{\theta_0}$ and the global sections functor furnishes an equivalence between the category of coherent arithmetic $\sD_{\cP}^{\dagger}$-modules and coherent $D^\dagger(\GG)_{\theta_0}$-modules respectively. An explicit quasi-inverse is given by the adjoint functor 
 ${\mathscr Loc}(M)=  \sD_{\cP}^{\dagger} \otimes_ {D^\dagger(\GG)} M$. This allows to pass back and forth between modules over  $D^\dagger(\GG)_{\theta_0}$ and sheaves on $\cP$.
 
 \vskip5pt 
 
 On the other hand, Abe-Caro have recently developed a theory of weights in $p$-adic cohomology \cite{AbeCaro} building on the six functor formalism 
 for Caro's overholonomic complexes \cite{Caro_ovhol}. On the way, they also defined an intermediate extension functor for arithmetic $\sD$-modules and investigated some of its properties. We then use a combination of Abe-Caro's theory, specialized to the flag variety, 
 and localization to obtain classification results for irreducible $D^\dagger(\GG)$-modules, in analogy to the classical setting of $U(\frg)$-modules.
 
 \vskip5pt 
 
 Our main result is the following: we call a nonzero $D^\dagger(\GG)_{\theta_0}$-module $M$ {\it geometrically $F$-holonomic}, if its
 localization ${\mathscr Loc}(M)$ has a Frobenius structure and is holonomic. We then consider the parameter set of pairs $(Y,\cE)$ where $Y\subset \cP_s$ 
is a connected smooth locally closed subvariety of the special fibre $\cP_s$ , $X$ its Zariski closure, and $\cE$ is an irreducible overconvergent $F$-isocrystal on the couple $\bbY=(Y,X)$. Two pairs $(Y,\cE)$ and  $(Y',\cE')$ are equivalent if $X=X'$ and the two isocrystals $\cE, \cE'$ coincide on an open dense subset of $X$. Given such a pair $(Y,\cE)$ we put $$ \cL(Y,\cE):= v_{!+}(\cE)$$ 
where $v: \bbY\rightarrow\bbP=(\cP_s,\cP_s)$ is the immersion of couples associated with $Y$ and $v_{!+}$ is the corresponding intermediate extension functor. We then have, cf. Thm. \ref{classRep}: 

\vskip5pt 

{\bf Theorem 1.}
{\it The correspondence $(Y,\cE)\mapsto H^0(\cP,\cL(Y,\cE))$ induces a bijection 
$$\{\text{equivalence classes of pairs $(Y,\cE)$}\}\car \{\text{irreducible $F$-holonomic $D^{\dagger}(\GG)_{\theta_0}$-modules} \}/ {\simeq} $$ }

For example, each couple $\bbY$ is equipped with the constant overconvergent $F$-isocrystal $\cO_{\bbY}$.
If $Z$ is a divisor in $\cP_s$ and $\cU=\cP \backslash Z$ with 
$\bbY=(\cU_s,\cP_s)$, then $\cO_{\bbY}=\cO_{\cP,\Q}(^\dagger Z)$, i.e. functions on $\cU$ with overconvergent singularities along $Z$.
 In general, if $Y$ 
admits a formal lift with connected rigid-analytic generic fibre, then $\cO_{\bbY}$ is irreducible and corresponds therefore to an 
irreducible $F$-holonomic $D^{\dagger}(\GG)_{\theta_0}$-module. 

\vskip5pt 

We expect that many $D^{\dagger}(\GG)_{\theta_0}$-modules, in particular those which come from admissible $G(K)$-representations, are in fact geometrically $F$-holonomic. As an example, we treat the case of highest weight modules (but there are many more, already in dimension one, cf. Theorem 3 below). We show that the central block of the classical BGG category $\cO_0$ embeds, via the base change $U(\frg)\rightarrow D^{\dagger}(\GG)$,
fully faithfully into the category of coherent $D^{\dagger}(\GG)$-modules (cf. Thm. \ref{ffembedding}). It is well-known that the irreducible modules in $\cO_0$ are parametrized by the Weyl group elements $w\in W$ via $L(w) := L(-w(\rho)-\rho)$ where $\rho$ denotes half the sum over the positive roots and where 
$L(-w(\rho)-\rho)$ denotes the unique irreducible quotient of the Verma module with highest weight $-w(\rho)-\rho$. We write $$L^{\dagger}(w) := D^{\dagger}(\GG) \otimes_{U(\frg)} L(w)$$ for its crystalline counterpart. On the other hand,  let $$Y_w:=BwB/B \subset P=G/B$$ be the Bruhat cell in $P$ associated with $w\in W$ and let $X_w$ be its Zariski-closure, a Schubert scheme. We have the couple $\bbY_w=(Y_{w,s},X_{w,s})$ and the immersion $v: \bbY_w\rightarrow \bbP.$ Our second main result is the following, cf. Thm. \ref{thm_final}:

\vskip5pt 

{\bf Theorem 2.}
{\it One has a canonical isomorphism of $\sD^{\dagger}_{\cP}$-modules
$$ {\mathscr Loc} (L^{\dagger}(w)) \simeq v_{!+}(\cO_{\bbY_w}).$$
In particular, the modules $L^{\dagger}(w)$ are geometrically $F$-holonomic for all $w\in W$. }

 \vskip5pt
 
 This result is in analogy with the classical result identifying the localization of the irreducible $U(\frg)$-module $L(w)$ with the intermediate extension of the constant connection on the complex Bruhat cell associated with $w$, cf. \cite{BB81,BK81}.  
 \vskip5pt 
 
 In the final section, we discuss somewhat detailed the example $G={\rm SL}_2$. In this case, $P$ equals the projective line over $\fro$ and {\it any}
 irreducible $\sD^{\dagger}_{\cP}$-module is holonomic. Moreover, theorem $1$ gives a classification in terms of irreducible 
overconvergent $F$-isocrystals $\cE$ on either a closed point $y$ of $\bbP_{k}^1$ or an open complement $Y$ of finitely many closed points $Z=\{y_1,...,y_n\}$ of $\bbP_{k}^1$. In the first case, the point is a complete invariant. For example, the point $y=\infty$ corresponds to the 
highest weight module $L^{\dagger}(-2\rho)$. In the second case, the empty divisor $Z=\varnothing$ corresponds to the trivial representation. 
For a non-empty $Z$, we may suppose that all its points $y_1,...,y_n$ are $k$-rational with $y_1=\infty$. There are then two extreme cases $$ Y=\bbA^1_k \hskip10pt \text{and} \hskip10pt Y= \bbP_{k}^1\setminus \bbP^1(k),$$
the affine line and Drinfeld's upper half plane, respectively. We illustrate the two by means of two "new" examples. 
In the case $Y=\bbA^1_k$ we assume that $K$ contains the $p$-th roots of unity $\mu_p$ and we choose an element $\pi\in\fro$ with ${\ord}_p (\pi) = 1/(p-1)$. We let ${\mathscr L}_{\pi}$ be the coherent $\sD^{\dagger}_{\cP}$-module defined by the {\it Dwork overconvergent $F$-isocrystal} on $\bbY$ associated with $\pi$. On the other hand, we let 
 $ \frn = K.e$ be the nilpotent radical of $Lie(B)$, where $e=\bigl( \begin{smallmatrix}
  0&1\\ 0&0
\end{smallmatrix} \bigr)$. Let $\eta: \frn \rightarrow K$ be a nonzero character and consider Kostant's {\it standard Whittaker module} 
 
 $$W_{\theta_0, \eta}:= U(\frg) \otimes_{ Z(\frg) \otimes U(\frn) } K_{\theta_0,\eta} $$

with character $\eta$ and infinitesimal character $\theta_0$ 
\cite[(3.6.1)]{KostantWhittaker}. It is an irreducible $U(\frg)$-module \cite[Thm. 3.6.1]{KostantWhittaker}, but {\it not} a highest weight module, i.e. it 
does not lie in $\cO_0$. We write $$W^{\dagger}_{\theta_0,\eta}:=D^{\dagger}(\GG)\otimes_{U(\frg) } W_{\theta_0,\eta}$$
for its crystalline counterpart. Our third main result is the following, cf. \ref{Dwork}: 
\vskip5pt

{\bf Theorem 3.} 
{\it  Consider the character $\eta$ defined by $\eta(e):=\pi$. 
There is a canonical isomorphism $${\mathscr Loc} (W^{\dagger}_{\theta_0,\eta} ) \car {\mathscr L}_{\pi} $$
as left $\sD^{\dagger}_{\cP}$-modules. In particular, $W^{\dagger}_{\theta_0,\eta}$ is geometrically $F$-holonomic.}
\vskip5pt 

The theorem shows, in particular, that the Dwork isocrystal ${\mathscr L}_{\pi}$ is {\it algebraic} in the sense that it comes from 
an algebraic $\sD_{\bbP^1_K}$-module, namely ${\rm Loc}(W_{\theta_0,\eta})$, by extension of scalars 
$\overline{\sD}_{\bbP^1_K}\rightarrow \sD^{\dagger}_{\cP}$. The holonomic $\sD_{\bbP^1_K}$-module  ${\rm Loc}(W_{\theta_0,\eta})$, however, 
is not regular, but has an irregular singularity at infinity. 

\vskip5pt 

We discuss an example in the Drinfeld case, where $Y= \bbP_k^1 \setminus \bbP^1(k)$. We identify $k=\bbF_q$.
We assume that $K$ contains the cyclic group $\mu_{q+1}$ of $(q+1)$-th roots of unity. The space $Y$ admits a distinguished unramified Galois covering 
 $u: Y' \rightarrow Y$ with Galois group $\mu_{q+1}$, given by the so-called {\it Drinfeld curve}

$$Y'=\Big\{(x,y) \in \bbA_k^2 \midc xy^q-x^qy=1\Big\}.$$

The latter admits a smooth and projective compactification $\overline{Y'}$. The covering map $u$ extends to a smooth and tamely ramified morphism 
$u: \overline{Y'} \rightarrow \bbP_k^1$ which maps the boundary bijectively to $Z=\bbP^1(k)$. We denote by $u: \bbY'\rightarrow\bbY$ the morphism of couples induced by $u$ in this situation and we let $$\cE=\bbR^{*}u_{\rm rig,*}\cO_{\bbY'}$$ be the relative rigid cohomology sheaf. Using results of Grosse-Kl\"onne \cite{GK_DL}, we show that $\cE$ admits an isotypic decomposition into overconvergent $F$-isocrystals $\cE(j)$ on $\bbY$ of rank one. In particular, each pair $(Y,\cE(j))$ corresponds in the classification of theorem $1$ 
to an irreducible geometrically $F$-holonomic $D^{\dagger}(\GG)_{\theta_0}$-module $H^0(\cP, v_{!+}\cE(j))$.

\vskip5pt 

We do not know whether the modules $H^0(\cP, v_{!+}\cE(j))$ are {\it algebraic}, in the sense that they arise by base change from irreducible $U(\frg)$-modules. If algebraic, to which class do they belong? We recall that irreducible $U(\frg)$-modules fall into three classes: highest weight modules, Whittaker modules and a third class whose objects (with a fixed central character) are in bijective correspondence with similarity classes of irreducible elements of a certain localization of the first Weyl algebra \cite{Block}. We plan to come back to these questions in future work. 
 
 \vskip5pt
 
{\it Notations and Conventions.} \label{notations}
In this article, $\fro$ denotes a complete discrete valuation ring of mixed characteristic $(0,p)$.
We let $K$ be its fraction field and $k$ its residue field, which is assumed to be perfect. 
We suppose that there exists a lifting of the Frobenius of $k$ to $\fro$.
We denote by $\vpi$ a uniformizer of $\fro$. All formal schemes $\frX$ over $\fro$ are assumed to be locally noetherian and such that $\vpi \cO_{\frX}$ is an ideal of definition. Without further mentioning, all occuring modules will be left modules. 

\section{Overholonomic modules and intermediate extensions}

For a smooth formal $\fro$-scheme $\frX$ we denote by $\sD^{\dagger}_{\frX}$ the sheaf of arithmetic differential operators on $\frX$.
We refer to \cite{BerthelotDI} for the basic features of the category of $\sD^{\dagger}_{\frX}$-modules.

\subsection{Overholonomic modules}\label{subsec_ovholext}

We introduce the framework of overholonomic complexes of arithmetic $\sD$-modules with Frobenius structure, following Abe-Caro \cite{AbeCaro}. 

\vskip5pt

Recall that a {\it frame} $(Y,X,\cP)$ is the data consisting of a separated and smooth formal scheme $\cP$ over $\fro$, a closed subvariety $X$ of its special fibre $\cP_s$, and an open subscheme $Y$ of $X$. A {\it morphism} between two such frames is the data $u=(b,a,f)$ consisting of morphisms 
$b: Y'\rightarrow Y, a: X'\rightarrow X, f: \cP'\rightarrow \cP$ such that $f$ induces $b$ and $a$. A {\it l.p. frame} $(Y,X,\cP,\cQ)$ is the data of a proper and smooth formal scheme $\cQ$ over $\fro$, an open formal subscheme $\cP\subset\cQ$ such that $(Y,X,\cP)$ is a frame. A {\it morphism} of l.p frames is defined in analogy to a morphisms of frames. It is called {\it complete} if the morphism $a: X'\rightarrow X$ is proper. 

A {\it couple} $\bbY$ is the data $(Y,X)$
consisting of a $k$-variety $X$ and an open subscheme $Y\subset X$ such that there exists a l.p. frame of the form $(Y,X,\cP,\cQ)$. A {\it morphism of couples} is 
the data $u=(b,a)$ consisting of morphisms $b: Y'\rightarrow Y, a: X'\rightarrow X$ such that $b$ is induced by $a$. It is called {\it complete} if $a$ is proper. 
Let P be a property of morphisms of schemes. One says that $u$ is $c$-P if $u$ is complete and $b$ satisfies P.
For all this, cf. \cite[1.1]{AbeCaro}.

\vskip5pt 
Denote by $\cP$ a smooth and proper formal scheme over $\fro$.

\vskip5pt

We denote by $D^b_{\rm ovhol}(\sD^{\dagger}_{\cP})$ the
triangulated category of complexes of overholonomic $\sD^{\dagger}_{\cP}$-modules introduced by Caro \cite[3.1]{Caro_ovhol}.
Let $Z$ be a closed subset of $\cP_s$, the special fibre of $\cP$. There are two functors $R\Gamma^{\dagger}_Z$ and $(^\dagger Z)$
defined on $D^b_{\rm ovhol}(\sD^{\dagger}_{\cP})$ giving rise to a localization triangle

$$ R\Gamma^{\dagger}_Z(\cE) \ra \cE \ra (^\dagger Z)\cE \stackrel{+1}{\ra } $$
for $\cE\in D^b_{\rm ovhol}(\sD^{\dagger}_{\cP})$, cf. \cite[1.1.5]{AbeCaro}. 

\vskip5pt 
 Let now $\bbY=(Y,X)$ be a couple such that 
$(Y,X,\cP,\cP)$ is a l.p. frame.
By abuse of notation, we will sometimes denote the frame $(Y,X,\cP)$ (or even the l.p. frame $(Y,X,\cP,\cP)$) by $\bbY$, too. This should not cause confusion. 

\vskip5pt 

The couple $\bbP$ is obtained by taking $Y=X=\cP_s$. 

\vskip5pt

Let $Z=X \backslash Y$. For $\cE\in D^b_{\rm ovhol}(\sD^{\dagger}_{\cP})$ one sets
$$ \bbR\Gamma^{\dagger}_Y(\cE):=\bbR\Gamma^{\dagger}_X\circ (^\dagger Z)(\cE).$$
The category $D^b_{\rm ovhol}(\bbY/K)$ of overholonomic complexes of arithmetic $\sD$-modules on $\bbY$
is defined to be the full subcategory of $D^b_{\rm ovhol}(\sD^{\dagger}_{\cP})$ formed by objects $\cE$ such that there is an isomorphism 
 $\cE\car \bbR\Gamma^{\dagger}_Y(\cE)$ \cite[1.1.5]{AbeCaro}. Of course,
$D^b_{\rm ovhol}(\bbP/K)=D^b_{\rm ovhol}(\sD^{\dagger}_{\cP})$. 

\vskip10pt

Abe-Caro introduce a canonical $t$-structure on $D^b_{\rm ovhol}(\bbY/K)$ in the following way \cite[1.2]{AbeCaro}. Write
$\cU=\cP\setminus Z$ for the open complement of $Z$ in $\cP$. Then $D^{\geq 0}_{\rm ovhol}(\bbY/K)$ is defined to be the strictly full
subcategory of objects $\cE\in D^b_{\rm ovhol}(\bbY/K)$ such that 
$$\cE_{|\cU}\in {D^{\geq 0}(D^{\dagger}_{\cU}})$$ (analogously for $\leq 0$).
The trunctation functors relative to the couple $\bbY$ are defined to be $$\tau^{\bbY}_{\geq 0}=(^\dagger Z)\circ \tau_{\geq 0}\hskip8pt \text{resp.} \hskip8pt\tau^{\bbY}_{\leq 0}=(^\dagger Z)\circ \tau_{\leq 0}, $$
where $\tau_{\geq 0}$ resp. $\tau_{\leq 0}$ are the usual truncation functors. The functors $\tau^{\bbY}_{\geq 0}$ and $\tau^{\bbY}_{\leq 0}$ define 
a $t$-structure on $F$-$D^b_{\rm ovhol}(\bbY/K)$ whose heart is denoted by $F\text{-Ovhol}(\bbY/K)$ \cite[1.2.9]{AbeCaro}. 
The latter is an abelian category which is noetherian and artinian \cite[1.2.13]{AbeCaro}. When $Y$ is smooth, the category $F\text{-Ovhol}(\bbY/K)$
contains a full subcategory $F$-${\rm Isoc}^{\dagger\dagger}(\bbY/K)$ which is equivalent to the category of overconvergent $F$-isocrystals on $\bbY$, the usual coefficients of rigid cohomology \cite[1.2.14]{AbeCaro}.
\vskip5pt

We recall an important key lemma. 
\begin{lemma}\label{LemFond} Let
$\cU=\cP\setminus Z$ and let $\alpha$ be a morphism in $D^b_{\rm ovhol}(\bbY/K)$. 
Then $\alpha$ is an isomorphism in $D^b_{\rm ovhol}(\bbY/K)$ if and only if $\alpha_{|\cU}$ is an isomorphism in ${D^{b}(D^{\dagger}_{\cU}})$.
\end{lemma}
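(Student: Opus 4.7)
The ``only if'' direction is immediate: restriction to the open formal subscheme $\cU = \cP \setminus Z$ is a triangulated functor, so it sends isomorphisms to isomorphisms. My plan for the converse is to form the mapping cone $C = {\rm Cone}(\alpha)$ in the ambient triangulated category $D^b_{\rm ovhol}(\sD^{\dagger}_{\cP})$ and show that $C = 0$.

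The first step is to verify that $C$ already lies in the subcategory $D^b_{\rm ovhol}(\bbY/K)$. For this, I would use that both $\bbR\Gamma^{\dagger}_X$ and $(^\dagger Z)$ are triangulated functors, hence so is their composition $\bbR\Gamma^{\dagger}_Y = \bbR\Gamma^{\dagger}_X \circ (^\dagger Z)$. Applying $\bbR\Gamma^{\dagger}_Y$ to the distinguished triangle $\cE \to \cF \to C \to \cE[1]$ and comparing with the original triangle, the defining isomorphisms $\cE \cong \bbR\Gamma^{\dagger}_Y(\cE)$ and $\cF \cong \bbR\Gamma^{\dagger}_Y(\cF)$ together with the axioms of triangulated categories yield $C \cong \bbR\Gamma^{\dagger}_Y(C)$, placing $C$ in $D^b_{\rm ovhol}(\bbY/K)$.

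The second step is the key identity $(^\dagger Z)(C) = 0$. By hypothesis $\alpha_{|\cU}$ is an isomorphism, so $C_{|\cU} = 0$, meaning $C$ is supported on $Z$. From the localization triangle
$$\bbR\Gamma^{\dagger}_Z(C) \;\longrightarrow\; C \;\longrightarrow\; (^\dagger Z)(C) \;\stackrel{+1}{\longrightarrow}\;$$
the vanishing of $C_{|\cU}$ forces the first arrow to be an isomorphism (since $\bbR\Gamma^{\dagger}_Z$ and $(^\dagger Z)$ form a recollement whose images on $\cU$ are $0$ and $C_{|\cU}$ respectively), so the third term vanishes. Finally I plug this into the defining identity for $C$ to conclude
$$C \;\cong\; \bbR\Gamma^{\dagger}_Y(C) \;=\; \bbR\Gamma^{\dagger}_X\bigl((^\dagger Z)(C)\bigr) \;=\; \bbR\Gamma^{\dagger}_X(0) \;=\; 0,$$
so that $\alpha$ is an isomorphism in $D^b_{\rm ovhol}(\bbY/K)$.

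The only non-formal input I expect to need is the statement that $(^\dagger Z)$ annihilates complexes supported on $Z$, which is the defining feature of this localization functor in Berthelot's theory and is recorded, in this overholonomic setting, in \cite[1.1.5]{AbeCaro}. So the main obstacle is really just ensuring that the classical pattern ``cone, localization triangle, vanishing'' survives passage to the overholonomic couple-indexed category $D^b_{\rm ovhol}(\bbY/K)$, which is handled by the triangulated nature of $\bbR\Gamma^{\dagger}_Y$.
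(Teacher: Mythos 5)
Your proof is correct, but note that the paper does not actually supply an argument for this lemma: its entire proof is the citation to \cite[1.2.3]{AbeCaro}, so there is no in-paper proof to compare against. What you have written out is the expected cone/localization-triangle argument that Abe--Caro give in that reference, and your overall structure --- form the cone $C$ of $\alpha$, observe that $C$ lies in $D^b_{\rm ovhol}(\bbY/K)$ since $\bbR\Gamma^{\dagger}_Y$ is triangulated and both endpoints of $\alpha$ are fixed by it, then kill $C$ --- is sound.

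The one place to phrase more carefully is the pivotal non-formal input, which your parenthetical slightly misidentifies. The orthogonality ``$(^\dagger Z)$ annihilates complexes supported on $Z$'', i.e.\ $(^\dagger Z)\circ\bbR\Gamma^{\dagger}_Z=0$, is not by itself enough to pass from $C|_{\cU}=0$ to $(^\dagger Z)(C)=0$: to invoke it you would first have to know $C\simeq\bbR\Gamma^{\dagger}_Z(C)$, which is exactly as nontrivial as the conclusion. What you actually need is the overconvergence-along-$Z$ property: a complex lying in the essential image of $(^\dagger Z)$ whose restriction to $\cU$ vanishes is itself zero. Applied to $(^\dagger Z)(C)$, whose restriction to $\cU$ is $C|_{\cU}=0$ (read off from the restricted localization triangle, since $\bbR\Gamma^{\dagger}_Z(C)|_{\cU}=0$), this gives $(^\dagger Z)(C)=0$ directly; the isomorphism $\bbR\Gamma^{\dagger}_Z(C)\car C$ is then a consequence, not the intermediate step. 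With that precision the final reduction $C\simeq\bbR\Gamma^{\dagger}_Y(C)=\bbR\Gamma^{\dagger}_X\bigl((^\dagger Z)(C)\bigr)=0$ is clean and the lemma follows.
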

\begin{proof} This is \cite[1.2.3]{AbeCaro}.
\end{proof}

\vskip5pt 

Main examples: (i) In the case where $\bbY=\bbP$, the category $F\text{-Ovhol}(\bbP/K)$ is the usual category of overholonomic arithmetic $F$-$\sD^{\dagger}_{\cP}$-modules on $\cP$. 

(ii) If $Z$ is a divisor in $\cP_s$ with open complement $Y=\cP_s \backslash Z$ and 
$\bbY=(Y,\cP_s,\cP)$, then  $F\text{-Ovhol}(\bbY/K)$ is the usual category of overholonomic 
$F$-$\sD^{\dagger}_{\cP}(^\dagger Z)$-modules.

\subsection{Intermediate extensions}

We keep the notation of the previous subsection. 
We introduce the intermediate extension functor for arithmetic $\sD$-modules following Abe-Caro \cite{AbeCaro}. 

\vskip5pt

Let $$u: \bbY \longrightarrow \bbY'$$ be a complete morphism of couples. 
There is a canonical homomorphism $$\theta_{u,\cE}: u_! \cE \longrightarrow u_+\cE $$ for any complex $\cE\in F$-$D^b_{\rm ovhol}(\bbY)$, cf. \cite[1.3.4]{AbeCaro}.
The morphism is compatible with composition in the following sense:  if $w=u_2\circ u_1$, where $u_1$ and $u_2$ are 
c-complete morphisms of couples, then

\begin{gather}\label{compatib}
\xymatrix{ u_{2!}\circ u_{1!}\cE  \ar@/_2pc/[rr]^{\theta_{u_2\circ u_1}}\ar@{->}[r]^{u_{2!}(\theta_{u_1})}& u_{2!}\circ u_{1+}\cE
\ar@{->}[r]^{\theta_{u_2}(u_{1+})} & u_{2+}\circ u_{1+}\cE
} 
\end{gather}
by \cite[Prop. 1.3.7]{AbeCaro}.
We denote by an exponent $(-)^0=\cH^0_t(-)$ the application of the first cohomology sheaf $\cH^0_t= \tau^{\bbY}_{\leq 0} \tau^{\bbY}_{\geq 0}$ relative to the $t$-structure on $F$-$D^b_{\rm ovhol}(\bbY/K)$ (and similar for $\bbY'$).
If $u$ is a c-immersion,
and if $\cE\in F$-$\text{Ovhol}(\bbY)$, then the {\it intermediate extension of $\cE$ on $\bbY'$} is defined to be
$$   u_{!+}(\cE):=\im(\theta^0_{u,\cE}\,\colon \, u_{!}^0\cE \ra u_{+}^0\cE).$$
Note that  if $w$ is a c-affine immersion, then  $u_+$ and $u_!$ are $t$-exact 
 by \cite[Remark 1.4.2]{AbeCaro}, so that the definition simplifies to 
$$   u_{!+}(\cE)=\im(\theta_{u,\cE}\,\colon \, u_{!}\cE \ra u_{+}\cE).$$

\vskip5pt

\subsection{A classification result} \label{classify} We keep the notation of the previous subsections. In particular, 
$\cP$ still denotes a smooth and proper formal scheme over $\fro$. Our goal here is to classify the 
irreducible overholonomic $F$-$\sD^{\dagger}_{\cP}$-modules, up to isomorphism. 
This is in close analogy to the classical setting of algebraic $D$-modules on complex varieties, e.g. \cite[3.4]{Hotta}. 

\vskip5pt 

We will only
consider couples that arise from a smooth locally closed subvariety $Y\subseteq \cP_s$ by taking its Zariski closure $X=\overline{Y}$ 
in $\cP_s$. Then $(Y,X,\cP)$ is a frame and $(Y,X,\cP,\cP)$ is a l.p. frame and $\bbY=(Y,X)$ is a couple.
By abuse of notation, we will sometimes denote the frame $(Y,X,\cP)$ (or even the l.p. frame $(Y,X,\cP,\cP)$) by $\bbY$, too. This should not cause confusion.

\vskip5pt 

For such a couple $\bbY=(Y,X)$, 
we consider the corresponding c-locally closed immersion  
$$v: \bbY \longrightarrow \bbP.$$ The associated intermediate extension functor 
$$v_{!+}:  F\text{-Ovhol}(\bbY/K)\longrightarrow  F\text{-Ovhol}(\bbP/K)= \{ \text{ overholonomic $F$-}\sD^{\dagger}_{\cP}\text{-modules }\}$$

is given by

$$ v_{!+}(\cE) := {\rm Im}\big( \theta_{v,\cE}^0 : v_{!}^0\cE\longrightarrow v_{+}^0\cE\big).$$

\vskip5pt

Suppose for a moment that $Y \subset\cP_s$ is {\it closed} and there exists a $\fro$-smooth closed formal 
subscheme $\frY\subset\cP_s$, defined by some coherent ideal sheaf in $\cO_{\cP}$, which lifts the closed immersion $Y\subset\cP_s$.
Then $\text{Ovhol}(\bbY)$ identifies with the category of overholonomic $\sD^{\dagger}_{\cY}$-modules and the functor $v_{!+}$
coincides with the direct image functor appearing in Kashiwara's equivalence \cite{BerthelotIntro,HS3}. By the latter equivalence, 
the functor $v_{!+}$ induces a bijection between the (isomorphism classes of) irreducible $\sD^{\dagger}_{\cY}$-modules and 
irreducible $\sD^{\dagger}_{\cP}$-modules supported on $\cY$.

\vskip5pt 
 
 The case of a closed immersion generalizes as follows. 

\begin{lemma} 
Let $\cM$ be an irreducible overholonomic $F$-$\sD^{\dagger}_{\cP}$-module. There is an open dense smooth subscheme $U\subset \cP_s$ 
with the property: if $u: \bbU=(\cU,\cP_s,\cP)\rightarrow\bbP$ denotes the corresponding c-open immersion, then $u^{!} \cM$ is an overconvergent
$F$-isocrystal on $\bbU$.
\end{lemma}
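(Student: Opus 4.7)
The plan is to find a smooth open dense subscheme $U \subset \cP_s$ such that $\cM|_{\cU}$ is $\cO_{\cU, \bbQ}$-coherent, where $\cU := \cP \setminus Z$ with $Z := \cP_s \setminus U$, and then to read off from \cite[1.2.14]{AbeCaro} that $u^{!}\cM = (^\dagger Z)\cM$ is an overconvergent $F$-isocrystal on $\bbU$. That reference identifies $F$-${\rm Isoc}^{\dagger\dagger}(\bbU/K)$, which by definition is the category of overconvergent $F$-isocrystals on $\bbU$, with the full subcategory of $F\text{-Ovhol}(\bbU/K)$ whose objects restrict to $\cO$-coherent modules on $\cU$.

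The key step is to produce such a $U$, and this will rest on the generic $\cO$-coherence of overholonomic arithmetic $\sD$-modules: the arithmetic counterpart of the classical fact that the characteristic variety of a holonomic $D$-module is Lagrangian, and hence coincides with the zero section of the cotangent bundle on a dense open subset of the support. Letting $S \subseteq \cP_s$ denote the support of $\cM$ --- an irreducible closed subvariety of the smooth $k$-variety $\cP_s$, since $\cM$ is irreducible --- I would take $U$ to be the union of $\cP_s \setminus S$ with a smooth open dense subscheme of $S$ on which $\cM$ is $\cO$-coherent. The resulting $U$ is then smooth, open, and dense in $\cP_s$, and on $\cU$ the module $\cM$ is either zero (if one lands in the complement of $S$) or $\cO_{\cU,\bbQ}$-coherent.

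Once $U$ is fixed, $u^{!}\cM = (^\dagger Z)\cM$ lies in $F\text{-Ovhol}(\bbU/K)$ by functoriality, and its restriction to $\cU$ is $\cO$-coherent by construction. Applying the characterization of $F$-${\rm Isoc}^{\dagger\dagger}(\bbU/K)$ recalled above, one concludes that $u^{!}\cM$ is an overconvergent $F$-isocrystal on $\bbU$, as desired.

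The hard part will be the generic $\cO$-coherence statement used to construct $U$. While the Lagrangian heuristic is standard and elementary in the classical complex setting, its rigorous implementation in the arithmetic framework has to be taken from Caro's machinery of characteristic cycles for overholonomic modules \cite{Caro_ovhol}. The $F$-structure itself plays no role in establishing $\cO$-coherence on $\cU$; rather, via \cite[1.2.14]{AbeCaro}, it is what upgrades the resulting (a priori only convergent) isocrystal on $\cU$ to an overconvergent isocrystal along $Z$.
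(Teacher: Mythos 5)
Your reduction to the $\cO$-coherence criterion is fine in principle: for the couple $\bbU=(U,\cP_s)$, an object of $F\text{-Ovhol}(\bbU/K)$ whose restriction to the open formal subscheme $\cU$ is $\cO_{\cU,\Q}$-coherent is, with its Frobenius, an overconvergent $F$-isocrystal via \cite[1.2.14]{AbeCaro} (this is exactly how the paper checks membership in $F$-${\rm Isoc}^{\dagger\dagger}$ in the proof of \ref{constirred}). The genuine gap is in your construction of $U$. If $S={\rm Supp}(\cM)$ has positive codimension in $\cP_s$, your choice $U=(\cP_s\setminus S)\cup S^{\circ}$ contains a dense open $S^{\circ}$ of $S$, and on $\cU$ the module $\cM$ is then a nonzero coherent $\sD^{\dagger}_{\cU}$-module supported on the positive-codimension closed subset $S^{\circ}\subset U$ (by Kashiwara's theorem it is the pushforward of an isocrystal living on $S^{\circ}$). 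Such a module is never $\cO_{\cU,\Q}$-coherent: an $\cO$-coherent $\sD^{\dagger}$-module is a convergent isocrystal, hence locally free on the generic fibre with open and closed support -- compare the delta module $\sD^{\dagger}/\sD^{\dagger}t$ on the formal affine line, which has infinite rank over $\cO_{\Q}$. So with your $U$ the module $u^{!}\cM$ fails the very criterion you want to apply: ``$\cM$ is generically an isocrystal on its support'' is not the same as ``$\cM$ is generically $\cO$-coherent on $\cP_s$''. The correct dichotomy is: if $S=\cP_s$, generic $\cO$-coherence holds and gives the required $U$ (the input here is Caro's d\'evissage into overconvergent $F$-isocrystals, packaged in \cite[1.4.9]{AbeCaro}, rather than a characteristic-cycle computation from \cite{Caro_ovhol}); if $S\subsetneq\cP_s$, you must take $U$ inside $\cP_s\setminus S$, in which case $u^{!}\cM=({}^\dagger Z)\cM=0$, the zero isocrystal, since $\cM$ is supported on $Z$.

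For comparison, the paper does not verify $\cO$-coherence by hand: it invokes Abe--Caro's structure theorem \cite[1.4.9(i)]{AbeCaro}, writing the irreducible $\cM$ as $u_{!+}(\cE)$ for an overconvergent $F$-isocrystal $\cE$ on a suitable $\bbU$, and then uses left $t$-exactness of $u^{!}$ to obtain $u^{!}\cM\subset u^{!}u^0_{+}\cE=\cE$, concluding since a subobject of an overconvergent $F$-isocrystal is again one. Your route can be repaired (treat the small-support case separately as above, and in the full-support case justify generic $\cO$-coherence by the d\'evissage theorem), but as written the key step -- the construction of $U$ -- fails whenever ${\rm Supp}(\cM)$ is a proper closed subvariety.
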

\begin{proof} By \cite[1.4.9(i)]{AbeCaro}, we know that there is an open dense smooth subscheme $U\subset \cP_s$ and an overconvergent
$F$-isocrystal $\cE$ on $\bbU$ such that $\cM=u_{!+}\cE$. By left $t$-exactness of $u^{!}$ we obtain $u^{!} \cM\subset u^{!}u^0_{+}\cE =\cE$.
Hence, $u^{!} \cM$ is an overconvergent $F$-isocrystal.
\end{proof}

Since any overholonomic $\sD^{\dagger}_{\cP}$-module $\cM$ is coherent \cite[3.1]{Caro_ovhol}, we may view 
its support $\rm{Supp}(\cM)$ as a closed (reduced) subvariety of $\cP_s$. 

\begin{prop} \label{prop-surjective}
Let $\cM$ be an irreducible overholonomic $F$-$\sD^{\dagger}_{\cP}$-module.
There is an open dense smooth affine subscheme of an irreducible component of 
$\rm{Supp}(\cM)$ with the property: if $v: \bbY\rightarrow\bbP$ denotes the corresponding immersion, then $\cE:=v^{!}\cM$ is an irreducible overconvergent $F$-isocrystal on $\bbY$. Moreover, $v_{!+}(\cE) =\cM$. 
\end{prop}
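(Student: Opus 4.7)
The plan is to use the previous Lemma together with the simplicity of $\cM$ to narrow down to an irreducible $F$-isocrystal living on a smooth affine open subscheme of an irreducible component of ${\rm Supp}(\cM)$. First, apply the previous Lemma to obtain an open dense smooth subscheme $U\subset\cP_s$ together with the c-open immersion $u:(U,\cP_s,\cP)\to\bbP$ such that $\cE_0:=u^!\cM$ is an overconvergent $F$-isocrystal on $\bbU$. The proof of that Lemma, which invokes \cite[1.4.9(i)]{AbeCaro}, in fact yields the stronger identity $\cM=u_{!+}(\cE_0)$, which is the starting point here.

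Next, decompose $U=\bigsqcup_i U_i$ into connected components; this induces a decomposition $\cE_0=\bigoplus_i\cE_i$ and, by additivity, $\cM=\bigoplus_i u_{!+}\cE_i$. Simplicity of $\cM$ forces all but one summand to vanish; relabeling, set $\cE:=\cE_1$ on $U_1$. A second application of simplicity shows that $\cE$ is itself an irreducible overconvergent $F$-isocrystal on $U_1$: a proper subobject $\cE'\subsetneq\cE$ would produce a proper submodule $u_{!+}\cE'\subsetneq\cM$, using that $u_{!+}$ preserves monomorphisms on c-immersions. Let $X:=\overline{U_1}\subset\cP_s$, an irreducible closed subvariety. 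The identity $\cM=u_{!+}(\cE)$ then forces ${\rm Supp}(\cM)=X$, hence $X$ is the unique irreducible component of the support. By generic smoothness and the existence of affine opens, choose a smooth affine open subscheme $Y\subset U_1$ that remains dense in $X$; set $\bbY=(Y,X)$ and let $v:\bbY\to\bbP$ be the corresponding c-locally closed immersion.

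Finally, $v$ factors as $v=u\circ w$, where $w:\bbY\to\bbU$ is the evident complete morphism of couples (with $Y\subset U$ open and $X\subset\cP_s$ closed). Since $w$ is an open immersion on the open part, one has $v^!\cM=w^!u^!\cM=w^!\cE=\cE|_Y$, an irreducible overconvergent $F$-isocrystal on $\bbY$: restriction of an irreducible $F$-isocrystal to a dense open subset of a connected variety remains irreducible. For the equality $v_{!+}(\cE|_Y)=\cM$, invoke the composition \eqref{compatib} applied to $v=u\circ w$: combined with the $t$-exactness properties of $u_+,u_!,w_+,w_!$ on the relevant categories (cf. \cite[Remark 1.4.2]{AbeCaro}), this yields the composition identity $v_{!+}=u_{!+}\circ w_{!+}$ on simple inputs, so that $v_{!+}(\cE|_Y)=u_{!+}(w_{!+}(\cE|_Y))=u_{!+}(\cE)=\cM$, the middle equality holding because irreducibility of $\cE$ on $U_1$ forces $w_{!+}(\cE|_Y)=\cE$ (the minimal extension of an irreducible object from a dense open is the object itself). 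The chief subtlety is precisely this composition identity $v_{!+}=u_{!+}\circ w_{!+}$, which does not hold for intermediate extensions in general; here it hinges on simplicity of the input and the compatibilities developed in \cite[\S 1.3--1.4]{AbeCaro}.
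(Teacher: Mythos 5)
Your proof is a genuinely different route from the paper's, but it contains a gap that invalidates it for modules with proper support. Let me explain.

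The paper's proof is short: choose $Y$ open dense smooth affine in an irreducible component of $\mathrm{Supp}(\cM)$ and contained in $U$, set $\cE = v^!\cM$, show $\cE$ is irreducible via Kashiwara's theorem [1.3.2(iii)] together with [1.4.9(ii)], then use the adjunction $\Hom(v_!\cE,\cM) = \Hom(\cE,v^!\cM) \neq 0$ to produce a nonzero map $v_!\cE \to \cM$, and conclude because $Y$ affine implies $v_!\cE = v^0_!\cE$ while $v_{!+}\cE$ is the \emph{unique} irreducible quotient of $v^0_!\cE$ [1.4.7(ii)]. No composition identity for intermediate extensions is needed.

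Your argument, by contrast, asserts $\mathrm{Supp}(\cM) = \overline{U_1}$ where $U_1$ is a connected component of the dense open $U \subset \cP_s$. In the flag variety case $\cP_s$ is irreducible, so $U$ is connected and $\overline{U_1} = \cP_s$: your proof then only covers modules with full support and cannot reach, e.g., the holonomic modules supported on proper Schubert varieties that are the main examples of the paper. The subsequent factorization $v = u \circ w$ with both $u$ and $w$ c-\emph{open} immersions compounds the problem: an open-in-open immersion forces $Y$ to be open in $\cP_s$, which contradicts $Y$ being a general locally closed subvariety of the support. By contrast, the paper factors $v = u \circ k$ with $k$ a c-\emph{closed} immersion $\bbY \to \bbU$ (i.e.\ $Y$ is closed in $U$, not open), which is precisely the shape to which the composition formula [1.4.5(i)] applies. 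Your invocation of that formula for an open-open composition is not what the cited reference covers, and the claim that $u_{!+}$ preserves monomorphisms and that $w_{!+}$ restricted from a dense open recovers the irreducible input are left unjustified, whereas the paper short-circuits all of this via adjointness and the unique-irreducible-quotient structure of $v^0_!\cE$.

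In short: the mechanism differs substantially (adjointness plus unique irreducible quotient versus explicit composition of intermediate extensions), and your version has a real gap in the identification of the support, inherited from reading the preceding Lemma's ``$U$ open dense in $\cP_s$'' too literally rather than working directly with a dense open of the support as the paper does.
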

\begin{proof} According to the preceding lemma, we may choose an open dense subscheme $U\subset \cP_s$ over which $\cM$ becomes 
an overconvergent isocrystal. We may choose an open dense smooth affine subscheme $Y$ of an irreducible component of $\rm{Supp}(\cM)$, which is contained in $U$. Let 
$\cE:=v^{!}\cM$. If $k$ denotes the c-closed immersion $\bbY\rightarrow\bbU$, then Abe-Caro's version of Kashiwara's theorem \cite[1.3.2(iii)]{AbeCaro} together with \cite[1.4.9(ii)]{AbeCaro} imply that $\cE= k^{!}u^{!}\cM$ is irreducible. Moreover, by adjointness \cite[1.1.10]{AbeCaro} 
$$ \Hom ( v_{!}\cE,\cM) =  \Hom ( \cE, v^{!}\cM) \neq 0$$
and there is therefore a non-zero morphism $v_{!}\cE\rightarrow \cM$. In other words, $\cM$ is a quotient of $v_{!}\cE$. 
But $v_{!}\cE=v^0_{!}\cE$, since $Y$ is affine, and $v_{!+}\cE$ is the unique irreducible quotient of $v^0_{!}\cE$ \cite[1.4.7(ii)]{AbeCaro}.
We therefore must have $v_{!+}\cE=\cM$. 
\end{proof}

Consider now a pair $(Y,\cE)$ where $Y\subset \cP_s$ is a connected smooth locally closed subvariety and $\cE$ is an irreducible overconvergent $F$-isocrystal on $\bbY=(Y,X)$. We write $$\cL(Y,\cE):= v_{!+}(\cE)\in F\text{-Ovhol}(\bbP).$$ 

\begin{prop} \label{prop-interext} The overholonomic $F$-$\sD^{\dagger}_{\cP}$-module $\cL(Y,\cE)$ is irreducible, has support $\overline{Y}$ and satisfies 
$v^{!} \cL(Y,\cE)=\cE$.
\end{prop}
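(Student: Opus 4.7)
The proposition assembles three standard properties of the intermediate extension functor for arithmetic $\sD$-modules. My plan is to treat the assertions in the order: irreducibility, the restriction identity $v^!\cL(Y,\cE)=\cE$, and finally the support. All three reduce to formal manipulations within the Abe-Caro framework, after factoring $v$ as $v=i\circ j$ with $j\colon(Y,\overline{Y},\cP)\to(\overline{Y},\overline{Y},\cP)$ a c-open immersion and $i\colon(\overline{Y},\overline{Y},\cP)\to\bbP$ a c-closed immersion.

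Irreducibility is immediate from \cite[1.4.7(ii)]{AbeCaro}, which was already invoked in the proof of Proposition~\ref{prop-surjective}: the module $v_{!+}\cE$ is the unique irreducible quotient of $v_!^0\cE$, and is therefore irreducible whenever $\cE$ is.

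For the restriction identity, observe that, by definition, the canonical morphism $\theta^0_{v,\cE}\colon v_!^0\cE\to v_+^0\cE$ factors as a surjection $v_!^0\cE\twoheadrightarrow v_{!+}\cE$ followed by an injection $v_{!+}\cE\hookrightarrow v_+^0\cE$. The plan is then to establish the two identities $v^!v_+^0\cE=\cE$ and $v^!v_!^0\cE=\cE$; granted these, $v^!\theta^0_{v,\cE}$ becomes the identity of $\cE$, and thus the map $v^!v_{!+}\cE\hookrightarrow v^!v_+^0\cE=\cE$, which is injective by left $t$-exactness of $v^!$, is automatically surjective, hence an isomorphism. The two identities follow by invoking Abe-Caro's version of Kashiwara's equivalence \cite[1.3.2]{AbeCaro} for the closed piece $i$, combined with the standard open-immersion adjunctions $j^!j_+=j^!j_!={\rm id}$ for the open piece $j$. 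Indeed, the identity $v^!v_+^0\cE=\cE$ in the open-immersion case was already used implicitly in the proof of Proposition~\ref{prop-surjective}.

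For the support, both $v_!^0\cE$ and $v_+^0\cE$ are supported in $\overline{Y}$, since they are zero on the open complement $\cP_s\setminus\overline{Y}$; hence so is their image $v_{!+}\cE$, giving $\rm{Supp}(\cL(Y,\cE))\subseteq\overline{Y}$. Conversely, the preceding step yields $v^!\cL(Y,\cE)=\cE\neq 0$, so the support of $\cL(Y,\cE)$ contains every point of $Y$; being closed, it must contain $\overline{Y}$, and equality follows. The main obstacle in the whole argument is the rigorous verification of $v^!v_+^0\cE=\cE$ for a general c-locally closed $v$: since $v_+$ is only left $t$-exact, $v_+^0\cE$ is the truncation $\cH^0_t(v_+\cE)$ rather than $v_+\cE$ itself, so one must confirm that applying the left $t$-exact $v^!$ to this truncation still recovers $\cE$ in degree zero. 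The factorization $v=i\circ j$ reduces this technical point to the closed and open cases, both of which are standard consequences of the adjunction formalism developed in \cite{AbeCaro}.
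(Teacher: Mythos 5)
Your proof is correct in outline, but it takes a noticeably longer route than the paper's, and the two differ at all three steps.

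For the irreducibility you invoke \cite[1.4.7(ii)]{AbeCaro} ($v_{!+}\cE$ is the unique irreducible quotient of $v_!^0\cE$), whereas the paper invokes \cite[1.4.7(i)]{AbeCaro}; both work. The real divergence is in the restriction identity $v^!\cL(Y,\cE)=\cE$. You argue via a careful chase: factor $v = i\circ j$ (open first, then closed), establish $v^!v_+^0\cE\simeq\cE$ and $v^!v_!^0\cE\simeq\cE$ using Kashiwara for $i$ and $j^!j_!=j^!j_+=\mathrm{id}$ for $j$, and then argue that $v^!\theta^0_{v,\cE}$ is the identity so that the injection $v^!v_{!+}\cE\hookrightarrow\cE$ has a right inverse and is therefore an isomorphism. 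This works, but it contains a gloss you correctly flag as the \emph{main obstacle}: establishing $v^!v_+^0\cE\simeq\cE$ and $v^!v_!^0\cE\simeq\cE$ as abstract isomorphisms does not by itself make $v^!\theta^0$ the identity; one must check that these identifications are compatible with the construction of $\theta$ by adjunction. The paper sidesteps all of this by using the \emph{irreducibility of $\cE$}: it only needs the inclusion $v^!\cL(Y,\cE)\subset v^!v_+^0\cE=\cE$ (which follows from left $t$-exactness) and the non-vanishing of $v^!\cL(Y,\cE)$ (from \cite[1.4.7(i)]{AbeCaro} and its proof); since a nonzero subobject of the irreducible $\cE$ must be $\cE$, one is done immediately. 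No chase of $\theta$ through the factorization, no verification that $v^!v_!^0\cE=\cE$, is needed. This is a genuinely simpler argument, and you should note that irreducibility of $\cE$ is a standing hypothesis you have not exploited.

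For the support, the paper uses the \emph{opposite} factorization of $v$ — as a $c$-closed immersion $k:\bbY\to\bbU$ into an open $\bbU$ followed by the $c$-open immersion $u:\bbU\to\bbP$ — and the formula $v_{!+}=u_{!+}\circ k_{!+}$ from \cite[1.4.5(i)]{AbeCaro}: the support of $k_{!+}\cE=k_+\cE$ is $Y$, and taking the intermediate extension through the open $u$ closes it up to $\overline{Y}$. Your argument instead combines the upper bound $\mathrm{Supp}\subseteq\overline{Y}$ (from both $v_!^0\cE$ and $v_+^0\cE$ vanishing off $\overline{Y}$) with the lower bound coming from $v^!\cL(Y,\cE)=\cE\neq 0$; this is fine but makes the support statement logically dependent on the restriction identity, whereas the paper's support argument is independent of it. Note also that your lower bound implicitly uses that $\cE$, being an irreducible isocrystal on the connected $Y$, has full support — worth stating.
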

\begin{proof}
The irreducibility statement and the fact that $0\neq v^{!} \cL(Y,\cE)\subset v^{!}v^0_{+}\cE = \cE$ follow from \cite[1.4.7(i)]{AbeCaro} and its proof. 
Since $\cE$ is irreducible, $v^{!} \cL(Y,\cE)=\cE$ as claimed. Finally, 
if $k: \bbY\rightarrow\bbU$ is a $c$-closed immersion and $u: \bbU\rightarrow\bbP$ a $c$-open immersion such that $v=u\circ k$, then
$v_{!+}=u_{!+}\circ k_{!+}$ \cite[1.4.5(i)]{AbeCaro}. The support of $k_{!+}\cE=k_{+}\cE$ equals $Y$ and 
the support of $ \cL(Y,\cE)=u_{!+}k_{!+}\cE$ equals $\overline{Y}$. 
\end{proof}

Two pairs $(Y,\cE)$ and  $(Y',\cE')$ are said to be {\it equivalent} if $\overline{Y}=\overline{Y'}$ and there is an open dense
$U\subset \overline{Y}$ contained in the intersection $Y\cap Y'$ such that $u^{!}\cE \simeq u'^{!}\cE'$. Here 
$u$ denotes the $c$-open immersion $\bbU=(U,\overline{Y}, \cP) \rightarrow \bbY$ and similarly for $u'$. This defines an equivalence relation on the set of couples.

\begin{thm}\label{classification} The correspondence $(Y,\cE)\mapsto \cL(Y,\cE)$ induces a bijection 
$$\{\text{equivalence classes of pairs $(Y,\cE)$}\}\car \{\text{irreducible overholonomic $F$-$\sD^{\dagger}_{\cP}$-modules} \}/ {\simeq} $$
\end{thm}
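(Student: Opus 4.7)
The map $(Y, \cE) \mapsto \cL(Y, \cE) = v_{!+}(\cE)$ must be shown to be surjective, well-defined on equivalence classes, and injective on those classes. I would obtain surjectivity immediately from Proposition \ref{prop-surjective}: given an irreducible $\cM$, that proposition produces a pair $(Y, \cE)$ with $Y$ open dense smooth affine in an irreducible component of ${\rm Supp}(\cM)$ and $\cE = v^{!}\cM$, such that $\cM \simeq \cL(Y, \cE)$.

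For injectivity, suppose $\cL(Y, \cE) \simeq \cL(Y', \cE')$. Proposition \ref{prop-interext} gives $X := \overline{Y} = \overline{Y'}$, a common irreducible closure (since $Y$ and $Y'$ are connected). Picking any open dense smooth $U \subset Y \cap Y'$ yields c-open immersions $u \colon \bbU \rightarrow \bbY$ and $u' \colon \bbU \rightarrow \bbY'$ with $v \circ u = v' \circ u' =: w$. Applying $w^{!}$ to the isomorphism $\cL(Y, \cE) \simeq \cL(Y', \cE')$, and using $v^{!}\cL(Y, \cE) = \cE$ from Proposition \ref{prop-interext} together with functoriality of $!$-pullback, I obtain $u^{!}\cE \simeq u'^{!}\cE'$, hence $(Y, \cE) \sim (Y', \cE')$.

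Well-definedness is the delicate step. My plan is to prove an auxiliary fact: for an irreducible overholonomic $F$-module $\cM$ with irreducible support $X$, one has $\cM \simeq v_{!+}(v^{!}\cM)$ for any sufficiently small open dense smooth $U \subset X$ (meaning $U$ lies in the locus where $\cM$ is an overconvergent $F$-isocrystal, supplied by the lemma preceding Proposition \ref{prop-surjective}). This refines Proposition \ref{prop-surjective} by combining it with the compatibility $(v \circ u)_{!+} \simeq v_{!+} \circ u_{!+}$ from \cite[1.4.5(i)]{AbeCaro}: shrinking from the specific $U_0$ furnished by Proposition \ref{prop-surjective} to an arbitrary smaller smooth open dense $U$ preserves the property $\cM \simeq v_{!+}(v^{!}\cM)$, provided one knows $u_{!+}(u^{!}\cF) \simeq \cF$ for a c-open immersion $u$ with dense image and an irreducible overconvergent $F$-isocrystal $\cF$. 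Granting this auxiliary fact, well-definedness follows: for equivalent pairs with common open dense $U \subset Y \cap Y'$ and $u^{!}\cE \simeq u'^{!}\cE'$, I would shrink $U$ so the auxiliary fact applies to both $\cL(Y, \cE)$ and $\cL(Y', \cE')$. Then $w^{!}\cL(Y, \cE) = u^{!}\cE \simeq u'^{!}\cE' = w^{!}\cL(Y', \cE')$, and the auxiliary fact yields $\cL(Y, \cE) \simeq w_{!+}(w^{!}\cL(Y, \cE)) \simeq w_{!+}(w^{!}\cL(Y', \cE')) \simeq \cL(Y', \cE')$.

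The hard part will be the sub-step $u_{!+}(u^{!}\cF) \simeq \cF$ inside the auxiliary fact. I expect this to follow from the $t$-exactness of $u_{+}$ and $u_{!}$ for affine open immersions (\cite[Remark 1.4.2]{AbeCaro}) combined with \cite[1.4.7]{AbeCaro}, which identifies $u_{!+}(u^{!}\cF)$ as the unique irreducible subquotient of $u_{+}(u^{!}\cF)$ whose $u^{!}$-restriction is $u^{!}\cF$; one would verify that $\cF$ itself, embedded in $u_{+}(u^{!}\cF)$ via the adjunction unit, satisfies this characterization and therefore coincides with $u_{!+}(u^{!}\cF)$.
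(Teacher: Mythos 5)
Your proposal is correct, and its skeleton (surjectivity from Proposition~\ref{prop-surjective}, injectivity by applying $w^{!}$ and using $v^{!}\cL(Y,\cE)=\cE$, plus a separate treatment of well-definedness) matches the paper's. The one place you diverge is well-definedness: the paper disposes of it in a single line by writing $(v\circ u)_{!+}\cF = \cL(Y,\cE)$ ``according to \ref{prop-surjective}'' --- which, read literally, only asserts the existence of \emph{some} good open, not that the given $U$ works; what is really being invoked is that the \emph{argument} of \ref{prop-surjective} applies verbatim to any sufficiently small open dense smooth affine inside the isocrystalline locus. You make this implicit point explicit by isolating the auxiliary claim $\cM\simeq v_{!+}(v^{!}\cM)$ for all sufficiently small $U$, reduced via $(v\circ u)_{!+}\simeq v_{!+}\circ u_{!+}$ to the shrinking lemma $u_{!+}(u^{!}\cF)\simeq\cF$ for a dense $c$-open immersion $u$ and irreducible $\cF$. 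That sub-lemma is correct: by adjunction the unit $\cF\to u^{0}_{+}u^{!}\cF$ is injective (it is the identity over $U$, so nonzero, and $\cF$ is irreducible), and then the characterization of $u_{!+}$ as the unique irreducible subobject of $u^{0}_{+}$ (the dual of \cite[1.4.7(ii)]{AbeCaro}, which the paper itself uses in the quotient form) forces $\cF= u_{!+}(u^{!}\cF)$. So your route is a touch longer than the paper's, but it closes a small gap in rigour by not relying on a reading of \ref{prop-surjective} beyond its literal statement; the trade-off is that you need the composition formula for intermediate extensions \cite[1.4.5(i)]{AbeCaro}, which the paper only uses elsewhere (in the proof of Proposition~\ref{prop-interext}). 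Both arguments ultimately rest on the same facts from Abe--Caro, so the difference is one of bookkeeping rather than of substance.
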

\begin{proof}
Let us show that the map in question is well-defined.
Let  $(Y,\cE)$ and $(Y',\cE')$ be two equivalent couples. Choose an open dense
$U\subset \overline{Y}$ contained in the intersection $Y\cap Y'$ such that $u^{!}\cE \simeq u'^{!}\cE'$. Note that $v\circ u = v' \circ u'$.
Define $\cF= (v\circ u)^{!}\cL(Y,\cE)$ and similarly for $\cL(Y',\cE')$.
Then $(v\circ u)_{!+} \cF = \cL(Y,\cE)$ according to  \ref{prop-surjective} and $\cF= u^{!} v^{!} \cL(Y,\cE) = u^{!} \cE$ according to \ref{prop-interext}.
Hence, $\cF\simeq \cF'$ and we obtain $\cL(Y,\cE) \simeq \cL(Y',\cE')$. 

Let us next show that the map is injective. So suppose that $\cL(Y,\cE) \simeq \cL(Y',\cE')$ for two couples $(Y,\cE)$ and  $(Y',\cE')$. Then \ref{prop-interext} implies $\overline{Y}=\overline{Y'}$ and moreover, if $U\subset \overline{Y}$ is open dense and contained in the intersection $Y\cap Y'$, then $(v\circ u)^{!} \cL(Y,\cE)= u^{!}\cE$. Since $v\circ u = v' \circ u'$, we obtain $u^{!}\cE \simeq u'^{!}\cE'$ as desired.
This proves the injectivity. The surjectivity of the map is a direct consequence of  \ref{prop-surjective}.
\end{proof}

\vskip10pt

Let $Y\subset \cP_s$ be a smooth locally closed subvariety and $\bbY=(Y,X)$. 
\begin{dfn} \label{def_constant} Let $d:=\dim(\cP_s)-\dim(Y)$. We define the {\it constant overholonomic module} on the frame $\bbY$ to be 
         $$ \cO_{\bbY}=R\Gamma_{\bbY}(\cO_{\cP,\Q})[d].$$
\end{dfn}

\begin{prop}\label{constirred} Suppose that $Y$ is connected and there exists a smooth formal 
scheme $\frY$ over $\fro$, so that the immersion $Y \ra \cP$ lifts to some morphism of formal schemes
$\cY \hra \cP$. The module $\cO_{\bbY}$ lies in $F$-${\rm Isoc}^{\dagger\dagger}(\bbY/K)$.
If the rigid-analytic generic fiber $\cY_K$ is connected,
then $\cO_{\bbY}$ is an irreducible object in the category 
$ {\rm Ovhol}(\bbY)$.
\end{prop}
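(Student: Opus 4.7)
The plan is to prove the two claims separately, both by restricting to the open formal subscheme $\cU=\cP\setminus Z$, where $Z=X\setminus Y$. The point is that on $\cU$ the formal subscheme $\cY\cap\cU\hookrightarrow\cU$ is a closed immersion of smooth formal schemes of codimension $d$, so Berthelot-Kashiwara applies directly and gives an explicit description of $\cO_{\bbY}|_{\cU}$.

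For the first claim, $\cO_{\cP,\Q}$ carries its canonical Frobenius structure (using the Frobenius lift on $\fro$), and this is inherited by $\cO_{\bbY}=R\Gamma_{\bbY}(\cO_{\cP,\Q})[d]$ by functoriality of local cohomology. To see that $\cO_{\bbY}$ is an overconvergent isocrystal, I would restrict to $\cU$: there $R\Gamma_{\bbY}=R\Gamma^\dagger_X\circ (^\dagger Z)$ becomes the local cohomology along the smooth closed formal subscheme $\cY\cap\cU\hookrightarrow\cU$, and Berthelot-Kashiwara identifies $R\Gamma^\dagger_{\cY\cap\cU}(\cO_{\cU,\Q})[d]$ with the direct image of the constant rank-one isocrystal $\cO_{\cY\cap\cU,\Q}$. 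The overconvergent structure along $Z$ is built into the factor $(^\dagger Z)$, so $\cO_{\bbY}$ lies in $F\text{-}{\rm Isoc}^{\dagger\dagger}(\bbY/K)$.

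For irreducibility in ${\rm Ovhol}(\bbY)$, let $0\neq \cF\hookrightarrow\cO_{\bbY}$ be a subobject. Restricting to $\cU$ and applying Kashiwara's equivalence, $\cF|_{\cU}$ corresponds to a coherent sub-$\sD^\dagger_{\cY\cap\cU}$-module $\cG$ of $\cO_{\cY\cap\cU,\Q}$; passing to the rigid-analytic generic fiber produces a coherent $\sD_{\cY_K}$-submodule $\cG_K\subset\cO_{\cY_K}$. Such a $\cG_K$ is a coherent ideal sheaf stable under all derivations, and on the smooth connected rigid space $\cY_K$ any such ideal is either $0$ or $\cO_{\cY_K}$: indeed, if $\cG_K$ were nonzero and proper, pick $p\in V(\cG_K)$ and a local section $f\in\cG_K$ nonzero near $p$; in local coordinates at $p$ (smooth!) some iterated partial derivative of $f$ does not vanish at $p$, hence is invertible nearby, so $\cG_K$ would agree with $\cO_{\cY_K}$ in a neighborhood of $p$, contradicting $p\in V(\cG_K)$. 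Since $\cY_K$ is connected and $\cF\neq 0$, we conclude $\cG_K=\cO_{\cY_K}$, hence $\cF|_{\cU}=\cO_{\bbY}|_{\cU}$, and Lemma~\ref{LemFond} gives $\cF=\cO_{\bbY}$.

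The main obstacle I anticipate is the bookkeeping in the first step: matching the shift $[d]$ with the codimension in Berthelot-Kashiwara, and ensuring that the restriction of $\cY\hookrightarrow\cP$ to $\cU$ really is a closed immersion of smooth formal schemes of codimension $d$ to which the equivalence applies. Once this geometric setup is secured, the irreducibility becomes the classical statement that a rank-one integrable connection on a smooth connected (rigid) space has no nontrivial $\sD$-submodule, transported through Kashiwara's equivalence and then glued back to $\bbY$ via Lemma~\ref{LemFond}.
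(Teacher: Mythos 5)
Your proof is correct and shares the paper's overall strategy: restrict to $\cU=\cP\setminus Z$, identify $\cO_{\bbY}|_{\cU}$ with the Berthelot--Kashiwara direct image $v_+\cO_{\cY}$, reduce the irreducibility question to showing that $\cO_{\cY,\Q}$ has no proper nonzero coherent $\sD^\dagger_{\cY}$-submodule, and then patch the conclusion back across $Z$ via Lemma~\ref{LemFond}. Where you genuinely diverge is the inner step over $\cY$. The paper passes to the rigid generic fiber $\cY_K$ and uses the structure of the abelian category of convergent isocrystals: a subobject $E\subset\cO_{\cY_K}$ and the quotient $\cO_{\cY_K}/E$ are both locally free $\cO_{\cY_K}$-modules, so on each piece of an affinoid cover $E$ is either $0$ or all of $\cO$, and connectedness of $\cY_K$ forces a global dichotomy. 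You instead run a direct pointwise argument: a nonzero $\sD_{\cY_K}$-stable ideal sheaf must be the unit ideal near any point where it is nonzero, because in characteristic zero some iterated derivative of a germ of finite vanishing order is a unit; connectedness then closes the loop exactly as before. Your variant is more elementary in that it avoids invoking local freeness of subquotients of convergent isocrystals, at the mild cost of checking that the vanishing order of a nonzero germ at a classical rigid point is finite (Krull intersection) and that residue fields of such points have characteristic zero. Both are sound. One small remark on the first assertion: your justification that $\cO_{\bbY}\in F\text{-}{\rm Isoc}^{\dagger\dagger}(\bbY/K)$ (``the overconvergent structure along $Z$ is built into the factor $(^\dagger Z)$'') is more heuristic than the paper's, which explicitly identifies $\cO_{\bbY}|_{\cU}\simeq v_+\cO_{\cY}={\rm sp}_+\cO_{\bbY}$ and cites the characterization of $F\text{-}{\rm Isoc}^{\dagger\dagger}$ from \cite[1.2.14]{AbeCaro}; it is worth stating the identification precisely rather than gesturing at the $(^\dagger Z)$ factor.
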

\begin{proof} 
Denote $Z=X \backslash Y$ and
$\cU= \cP\backslash Z$. We have the closed immersion of smooth formal schemes 
$v: \cY \hra \cU$. Then, by ~\cite[Proposition 1.4]{Berthelot-cohdif}, we see that 
$$\cO_{\bbY|\cU} = R\Gamma_{\cY}(\cO_{\cU,\Q})[d]\simeq  v_+v^{!}\cO_{\cU,\Q}[d]  = v_+\cO_{\cY}.$$
This coincides with ${\rm sp}_{+}\cO_{\bbY}$ and hence lies in the category $F$-${\rm Isoc}^{\dagger\dagger}(Y,\cU/K)$,
in the notation of \cite[1.2.14]{AbeCaro}. This shows $\cO_{\bbY}\in F$-${\rm Isoc}^{\dagger\dagger}(\bbY/K)$.
\vskip5pt 
The irreducibility statement is based on the following
\begin{lemma} \begin{enumerate}  Let $\cQ$ be a connected smooth formal scheme over $\fro$ and $\cQ_K$ its 
generic fiber (as rigid analytic space). Assume furthermore that $\cQ_K$ is connected.
\item The constant isocrystal $\cO_{Q_K}$ is irreducible in the category of convergent isocrystals.
\item The coherent $\sD^{\dagger}_{\cQ}$-module $\cO_{\cQ,\Q}$ is irreducible in the category of
$\sD^{\dagger}_{\cQ}$-modules.
 \end{enumerate}
\end{lemma}
\begin{proof} We begin by (i). 
Let $E$ be a subobject of $\cO_{Q_K}$ in the abelian category of convergent isocrystals over $\cQ_K$, and 
$E'=\cO_{\cQ_{K}}/E$ be the quotient. 
As convergent isocrystals over $\cQ_K$, $E$ and $E'$ are
locally free $\cO_{\cQ_K}$-modules so that there exists an admissible cover by affinoids $\cU_i$ ($i\in I$) such that 
$E_{|\cU_i}$ and $E'_{|\cU_i}$ are free $\cO_{\cU_i}$-modules for each $i$. Fix $i_0$ and denote by 
$A=\Ga(\cU_{i_0},\cO_{\cU_{i_0}})$. Since $\cU_{i_0}$ is affinoid, we have an exact sequence of free $A$-modules 
$$   0 \ra \Ga(\cU_{i_0},E) \ra A \ra \Ga(\cU_{i_0},E')\ra 0.$$ Take $x$ a point of $\cU_{i_0}$, and $K(x)$ its residue
field, then the previous exact sequence remains exact after tensoring by $K(x)$, meaning that
 $\Ga(\cU_{i_0},E)$ is
either equal to $0$ or to $A$. Assume for example that this is equal to $0$, so that $E_{|\cU_{i_0}}=0$ by 
Tate's acyclicity theorem. By Zorn's lemma there is a maximal subset 
$J\subset I$ such that $E_{|U_i}=0$ for each $i \in J$. Assume that $J\not =I$ then $J'=I \setminus J$ is not empty. By
connectedness, the union $\bigcup_{i\in J'}U_i$ intersects the union $\bigcup_{i\in J}U_i$, thus there exist $l\in J'$, 
$i \in J$ such that $U_l \bigcap U_i \neq \emptyset.$ Since $E_{|U_l}$ is either equal to $0$ or to 
$\cO_{U_l}$, we see that it is zero by restricting to $U_l \bigcap U_i$, which contradicts the fact that 
$J\neq I$. This proves (i).

For (ii) we use then that the abelian category of convergent isocrystals over the generic fiber $\cQ_K$ of the
formal scheme $\cQ$ is equivalent to the category of coherent $\sD^{\dagger}_{\cQ}$-modules, that are coherent
$\cO_{\cQ,\Q}$-modules (\cite[4.1.4]{BerthelotDI}). The functors $sp_*$ and $sp^*$ realize this equivalence of
categories.  Let $\cE$ be a non-zero coherent $\sD^{\dagger}_{\cQ}$-submodule of
$\cO_{\cQ,\Q}$, then $E=sp^*\cE$ is a convergent isocrystal, that is a subobject of $\cO_{Q_K}$. By (i), it is either
$0$ or equal to the constant convergent isocrystal $\cO_{\cQ_K}$. Thus $\cE$ is either $0$ or $\cO_{\cQ,\Q}$ and this proves (ii).
\end{proof}

Let us come back to the proof of the proposition. Let $\alpha: \cE \hra \cO_{\bbY}$ be an injective
morphism in the category $\text{Ovhol}(\bbY)$. As remarked in the beginning of the proof, 
$$\cO_{\bbY|\cU} = R\Gamma_{\cY}(\cO_{\cU,\Q})[d]\simeq  v_+v^{!}\cO_{\cU,\Q}[d]  = v_+\cO_{\cY}.$$
By Kashiwara's theorem for the closed immersion $v: \cY \hra \cU$
 \cite{BerthelotIntro,HS3} and the previous lemma, $v_+\cO_{\cY}$ is irreducible in the category of coherent
$\sD^{\dagger}_{\cU}$-modules with support in $\cY$, so that $\cE_{|\cU}$ is either $0$ or equal to $v_+\cO_{\cY}$. 
Using ~\ref{LemFond}, we conclude that $\cE$ is either $0$ or equal to $\cO_{\bbY}$.
\end{proof}

Example: If $Z$ is a divisor in $\cP_s$ , $\cU=\cP \backslash Z$, 
$\bbY=(\cU_s,\cP_s,\cP)$ then  $F-\text{Ovhol}(\bbY)$ is the usual category of overholonomic $F-\sD^{\dagger}_{\cP}(^\dagger
Z)$-modules. In this case, if $\cU$ and its generic fiber $\cU_K$ are connected,
then the constant overholonomic module $\cO_{\bbY}=\cO_{\cP,\Q}(^\dagger Z)$ is an irreducible 
$\sD^{\dagger}_{\cP}(^\dagger Z)$-module by the previous proposition applied to $Y=\cU_s$, the special fiber of $\cU$.

\begin{prop} The intermediate extension  
$v_{!+}(\cO_{\bbY})$
is an irreducible overholonomic $F$-$\sD^{\dagger}_{\cP}$-module.
\end{prop}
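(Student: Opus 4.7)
The plan is to deduce this as an immediate consequence of the two preceding propositions, once the hypotheses of Proposition \ref{constirred} are understood to be in force (namely, that $Y$ is connected and admits a smooth formal lift $\cY\hookrightarrow\cP$ whose rigid-analytic generic fibre $\cY_K$ is connected). Under these hypotheses, Proposition \ref{constirred} gives two facts simultaneously: first, that $\cO_{\bbY}$ lies in the full subcategory $F\text{-}{\rm Isoc}^{\dagger\dagger}(\bbY/K)\subset F\text{-Ovhol}(\bbY/K)$ of overconvergent $F$-isocrystals on $\bbY$; and second, that $\cO_{\bbY}$ is an irreducible object of ${\rm Ovhol}(\bbY)$.

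The first step is to promote irreducibility in ${\rm Ovhol}(\bbY)$ to irreducibility in the Frobenius-equivariant category $F\text{-Ovhol}(\bbY/K)$. This is automatic: any subobject in $F\text{-Ovhol}(\bbY/K)$ is in particular a subobject in ${\rm Ovhol}(\bbY)$, so if the latter has no proper nonzero subobjects then neither does the former. Thus $\cO_{\bbY}$ is an irreducible overconvergent $F$-isocrystal on $\bbY$.

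The second step is to apply Proposition \ref{prop-interext} to the pair $(Y,\cO_{\bbY})$. That proposition asserts precisely that whenever $\cE$ is an irreducible overconvergent $F$-isocrystal on $\bbY$, the intermediate extension $\cL(Y,\cE)=v_{!+}(\cE)$ is an irreducible overholonomic $F$-$\sD^{\dagger}_{\cP}$-module. Taking $\cE=\cO_{\bbY}$ yields the statement.

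There is no real obstacle: the work has already been done in \ref{constirred} (which is where the delicate geometric argument based on Tate acyclicity and Kashiwara's theorem occurs) and in \ref{prop-interext} (which encodes the general formalism of intermediate extensions via the image of $\theta^0_{v,\cE}:v_!^0\cE\to v_+^0\cE$). The only point worth making explicit in the write-up is the passage from the non-equivariant irreducibility statement of \ref{constirred} to the Frobenius-equivariant version required as input to \ref{prop-interext}.
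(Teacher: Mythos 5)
Your proof is correct and follows essentially the same route as the paper, which simply cites Proposition \ref{constirred} and the classification result; you invoke Proposition \ref{prop-interext} directly, which is the input to the classification theorem, so the content is the same. The one point you make that the paper leaves implicit, the passage from irreducibility of $\cO_{\bbY}$ in $\mathrm{Ovhol}(\bbY)$ to irreducibility in the Frobenius-equivariant category, is a worthwhile clarification and is handled correctly (a Frobenius-stable subobject is in particular a subobject, so forgetting the Frobenius structure can only make irreducibility harder to achieve).
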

\begin{proof}
This follows from the theorem \ref{classification} and the above proposition.
\end{proof}

\section{Some compatibility results between generic and special fibre}

We keep the notations introduced in the preceding section. 
In this section, we place ourselves into certain integral situations involving schemes over $\fro$ and establish
various compatibilities between the classical intermediate extensions on generic fibres and Abe-Caro intermediate extensions 
arising after reduction on the special fibre. We will focus in particular on the cases of open immersions and proper morphisms. 

\vskip5pt 

The results of this section are then applied in the final section \ref{secfinal} in the case of the flag variety, in order to 
compare intermediate extensions over the Bruhat cells, both in the generic and in the special fibre, cf. prop. \ref{app_flag} and thm. \ref{thm_final}.

\subsection{Notations}
Let us begin with some notations: if $X$ denotes a $\fro$-scheme, then $X_s$ will be its special fiber, 
$X_{\bbQ}$ its generic fiber, $X_i=X\times \Spec{\fro/\varpi^{i+1}}$, $\frX$ (or $\cX$) the associated formal scheme obtained after $p$-adic 
completion, and $\bbX$ is the frame $\bbX=(X_s,X_s,\frX)$. Moreover, if $f: X \ra Y$ is a morphism 
of $\fro$-schemes, then $f_s$ (resp. $f_{\bbQ}$, 
$\hf,f_i$) will denote the induced morphism $X_s \ra Y_s$ (resp. $X_{\bbQ}\ra Y_{\bbQ}$, $\frX \ra \frY$, 
$X_i \ra Y_i$), 
and $F=(f_s,f_s,\hf)$ will denote the morphism of frames between the frames $\bbX$ and $\bbY$. 

\subsection{Open immersions} \label{debcompat}
Let $P$ be a smooth scheme over $\fro$. The closed immersions $P_i \hra \cP$ give rise to a canonical morphism of ringed spaces 
$$  \alpha \,\colon \, \cP =\varinjlim_i P_i \ra P.$$
It comes with the diagram
$$ \cP \sta{\alpha}{\ra} P \sta{j}{\hla} P_{\bbQ}$$ 

which will be our basic underlying structure in the following. We have the following first result.

\begin{lemma} \begin{enumerate} \item There is a canonical isomorphism $$\cO_{P,\bbQ}\simeq j_*\cO_{P_{\bbQ}}.$$
                \item There is a canonical isomorphism $$\sD^{(m)}_{P,\bbQ}\simeq j_*\sD_{P_{\bbQ}}$$ for any  $m$.
              \end{enumerate}
\end{lemma}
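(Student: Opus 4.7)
My plan is to reduce both isomorphisms to affine-local computations on $P$, exploiting $\fro$-flatness of the smooth scheme $P$.

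For part (i), both $\cO_{P,\bbQ}$ and $j_*\cO_{P_\bbQ}$ are quasi-coherent $\cO_P$-modules. The adjunction $\cO_P \to j_* j^* \cO_P = j_*\cO_{P_\bbQ}$ extends $K$-linearly to a canonical morphism $\cO_{P,\bbQ} = \cO_P \otimes_\fro K \to j_*\cO_{P_\bbQ}$. On any affine open $U = \Spec A \subset P$, both sides evaluate to $A \otimes_\fro K$: on the right, because $U \cap P_\bbQ = \Spec(A \otimes_\fro K)$ by $\fro$-flatness of $A$. Hence the map is an isomorphism on every affine open, and therefore an isomorphism of sheaves on $P$.

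For part (ii), I would construct the analogous canonical map $\sD^{(m)}_{P,\bbQ} \to j_* \sD_{P_\bbQ}$ by restricting Berthelot's level-$m$ arithmetic differential operators to the generic fiber $P_\bbQ$, which is a smooth $K$-scheme of characteristic zero. To verify that this map is an isomorphism, I would pass to local coordinates: on an affine open $U \subset P$ equipped with étale coordinates $x_1, \ldots, x_n$, Berthelot's local description presents $\sD^{(m)}_P(U)$ as a free $\cO_P(U)$-module on operators $\partial^{\langle \underline{k}\rangle}_{(m)}$ indexed by $\underline{k} \in \bbN^n$. Since the factorial scalars relating $\partial^{\langle \underline{k}\rangle}_{(m)}$ to the ordinary partials $\partial^{\underline{k}}$ become units in $K$ after inverting $p$, the tensor $\sD^{(m)}_P(U) \otimes_\fro K$ is free over $A \otimes_\fro K$ on $\{\partial^{\underline{k}}\}$, which is precisely $\sD_{P_\bbQ}(U_K) = (j_*\sD_{P_\bbQ})(U)$. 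Hence the map is an affine-local isomorphism, and therefore an isomorphism of sheaves on $P$.

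The main obstacle is the bookkeeping in part (ii): correctly invoking Berthelot's local presentation of $\sD^{(m)}_P$ and verifying that the scalars relating the divided-power-normalized basis to the standard partials indeed become units in $K$. Once this is in place, both parts reduce to the elementary fact that on an $\fro$-flat affine open, inverting $p$ in the integral sections produces exactly the generic-fiber sections.
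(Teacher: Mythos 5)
Your proof is correct and follows the same strategy as the paper: in both parts you construct the canonical morphism by inverting $p$ on the natural integral map, then check affine-locally that it is an isomorphism, using in (ii) that on a coordinatized affine open both sides become free $\cO_{P,\bbQ}$-modules on the ordinary partials $\underline{\partial}^{\underline{k}}$ once the divided-power normalization factors become units in $K$. This matches the paper's argument essentially line by line.
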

\begin{proof} We consider the canonical morphism of quasi-coherent 
$\cO_{P}$-sheaves $\cO_{P} \ra j_*\cO_{P_{\bbQ}}$. After tensoring with $\bbQ$, we get a morphism
$\cO_{P,\bbQ} \ra j_*\cO_{P_{\bbQ}}$. If $P=\Spec A$ is affine, then this morphism is the identity of
$A_{\bbQ}=\Ga(P,\cO_{P,\bbQ})=\Ga(P,j_*\cO_{P_\bbQ})$. 
This proves (i). For (ii), we start with the canonical morphism 
$$\sD^{(m)}_{P} \ra j_*\sD_{P_{\bbQ}}\simeq j_*\sD^{(m)}_{P,{\bbQ}}.$$ It induces a morphism $\sD^{(m)}_{P,\Q} \ra j_*\sD_{P_{\bbQ}}$. In order to prove that this morphism is an isomorphism, we may assume that 
$P$ is affine with local coordinates $x_1,\ldots,x_M$. In this case, both sheaves are free 
$\cO_{P,\bbQ}$-modules with basis $\uder^{\uk}$ and we conclude using (i).
\end{proof}

For a quasi-coherent $\cO_{P_{\bbQ}}$-module $\cE$, we set
$$ \ocE:=\alpha^{-1}j_{*}\cE.$$ 
\begin{lemma} The formation $\cE \mapsto \ocE$ is an exact functor from the category of quasi-coherent $\cO_{P_{\bbQ}}$-modules 
to the category of $\ocO_{P_{\bbQ}}$-modules. 
\end{lemma}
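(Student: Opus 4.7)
The plan is to factor the functor $\cE\mapsto\overline{\cE}$ as $\alpha^{-1}\circ j_{*}$ and verify that each factor is exact on the relevant category. Since $\alpha^{-1}$ denotes sheaf-theoretic inverse image (not tensor-product pullback), it is automatically exact as a left adjoint that also preserves finite limits in the category of sheaves of abelian groups. So the whole issue is reduced to showing that $j_{*}$ is exact on quasi-coherent $\cO_{P_{\bbQ}}$-modules.

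The key observation is that the open immersion $j\colon P_{\bbQ}\hookrightarrow P$ is an \emph{affine} morphism. Indeed, working locally on an affine open $\Spec A\subset P$, one has $j^{-1}(\Spec A)=\Spec A[1/\varpi]$, since $P_{\bbQ}$ is the locus where the uniformizer $\varpi$ is invertible. Being affine and quasi-compact, $j$ satisfies the standard fact that $j_{*}$ sends quasi-coherent sheaves to quasi-coherent sheaves and is exact on the category of quasi-coherent modules (locally, it corresponds to the exact functor $M\mapsto M$ from $A[1/\varpi]$-modules to $A$-modules). This takes care of the exactness claim.

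It remains to explain the target category. By part (i) of the preceding lemma, $\alpha^{-1}j_{*}\cO_{P_{\bbQ}}=\alpha^{-1}\cO_{P,\bbQ}=\overline{\cO}_{P_{\bbQ}}$ by our notational convention. Since $j_{*}\cE$ is naturally a module over the sheaf of rings $j_{*}\cO_{P_{\bbQ}}$ and $\alpha^{-1}$ preserves module structures, the sheaf $\overline{\cE}=\alpha^{-1}j_{*}\cE$ is canonically a module over $\overline{\cO}_{P_{\bbQ}}$, and morphisms of quasi-coherent $\cO_{P_{\bbQ}}$-modules induce $\overline{\cO}_{P_{\bbQ}}$-linear maps.

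There is no real obstacle; the only point requiring care is the affineness of $j$, which is specific to the situation where $P_{\bbQ}$ is the complement of the vanishing locus of the global function $\varpi$. Once this is in place, the combination \emph{exact $j_{*}$ on quasi-coherents} plus \emph{always-exact $\alpha^{-1}$} gives the result immediately.
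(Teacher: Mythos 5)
Your proof is correct and follows the same route as the paper: factor $\cE\mapsto\overline{\cE}$ as $\alpha^{-1}\circ j_*$, use exactness of $\alpha^{-1}$ as a sheaf-theoretic inverse image, and use exactness of $j_*$ on quasi-coherent modules because $j$ is affine. The extra detail you supply (the local identification $j^{-1}(\Spec A)=\Spec A[1/\varpi]$ and the remark on the target ring) simply makes explicit what the paper leaves implicit.
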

\begin{proof}
This statement comes from the fact that the functor $j_*$ is exact on quasi-coherent $\cO_{P_{\bbQ}}$-sheaves, since $j$
is affine, as well as $\alpha^{-1}$. The functor $\cE \mapsto \ocE$ is thus exact as the composition of two exact
functors.
\end{proof}

According to the lemma, the functor $\cE \mapsto \ocE$ passes directly to derived categories and gives a functor 
$$D^b_{qcoh}(\cO_{P_{\bbQ}})\longrightarrow D^b(\ocO_{P_{\bbQ}}).$$

We now consider the sheaf of rings $\osD_{P_{\bbQ}}$ on $\cP$.

\begin{lemma} \label{platitude}There is an injective flat morphism of sheaves of rings $$\osD_{P_{\bbQ}} \hrig  \sD^{\dagger}_{\cP}.$$
\end{lemma}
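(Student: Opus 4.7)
The plan is to construct the morphism explicitly via the universal property of completion and the identification from the previous lemma, then verify injectivity and flatness on stalks, which can be checked after localizing on an affine chart with coordinates.

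First, I would construct the morphism as follows. For each level $m$, there is a canonical morphism of sheaves of rings on $P$
\[
\sD^{(m)}_{P} \longrightarrow \alpha_{*}\widehat{\sD}^{(m)}_{\cP}
\]
coming from the $p$-adic completion process (applied to $\sD^{(m)}_{P}$ viewed on the topological space $\cP \subset P$). By adjunction this gives $\alpha^{-1}\sD^{(m)}_{P} \ra \widehat{\sD}^{(m)}_{\cP}$, and after inverting $p$ and using the isomorphism $\sD^{(m)}_{P,\bbQ} \simeq j_{*}\sD_{P_{\bbQ}}$ from the previous lemma (independent of $m$!), I obtain $\osD_{P_{\bbQ}} \ra \widehat{\sD}^{(m)}_{\cP,\bbQ}$. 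These morphisms are compatible with the transition maps $\widehat{\sD}^{(m)}_{\cP,\bbQ} \ra \widehat{\sD}^{(m+1)}_{\cP,\bbQ}$, so passing to the inductive limit produces the desired morphism $\osD_{P_{\bbQ}} \ra \sD^{\dagger}_{\cP}$.

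Next, I would check injectivity and flatness locally, so assume $P = \Spec A$ affine and admits local coordinates $x_{1},\ldots,x_{M}$. Then $\sD^{(m)}_{P}$ is a free $\cO_{P}$-module with basis $\uder^{\langle \uk \rangle_{(m)}}$, and $\widehat{\sD}^{(m)}_{\cP}$ has the explicit description as formal series with coefficients in $\widehat{A}$ satisfying appropriate $p$-adic convergence. Thus the stalk of $\osD_{P_{\bbQ}}$ at a closed point $x$ of $\cP$ is $(\sD^{(m)}_{P,x}) \otimes \bbQ$, i.e. finite $\cO_{P,x,\bbQ}$-linear combinations of the $\uder^{\uk}$, while the stalk of $\sD^{\dagger}_{\cP}$ consists of certain overconvergent series in the same basis. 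Injectivity is then immediate from the freeness of both modules over their respective coefficient rings on a common explicit basis, together with the injectivity of the natural map $\cO_{P,x,\bbQ} \hra \cO^{\dagger}_{\cP,x,\bbQ}$.

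For flatness I would use the factorization into two steps. The map $\osD_{P_{\bbQ}} \ra \widehat{\sD}^{(m)}_{\cP,\bbQ}$ is, locally in coordinates, essentially a $p$-adic (and weakly $m$-adic) completion on a free module side, combined with the faithfully flat completion map $A_{\bbQ} \ra \widehat{A}_{\bbQ}$ on the coefficient side; both sheaves are Noetherian on affines (as $\widehat{\sD}^{(m)}_{\cP,\bbQ}$ is Noetherian by Berthelot) and the map is flat. In the passage to the inductive limit $\sD^{\dagger}_{\cP} = \varinjlim_{m} \widehat{\sD}^{(m)}_{\cP,\bbQ}$, flatness is preserved because filtered colimits of flat modules are flat.

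The main obstacle will be pinning down flatness rigorously: although each $\widehat{\sD}^{(m)}_{\cP,\bbQ}$ has a clean explicit description and is Noetherian, the ring $\osD_{P_{\bbQ}}$ is less standard (not Noetherian in general) and one must argue flatness directly from the basis decomposition rather than by invoking Noetherian completion theorems naively. I expect this to reduce to showing that, on a local chart, $\widehat{\sD}^{(m)}_{\cP,\bbQ}$ is a free (in the topological sense) right $\osD_{P_{\bbQ}}$-module on the symbols $\uder^{\langle \uk \rangle_{(m)}}$, which suffices for flatness.
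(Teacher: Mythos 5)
Your proposal is correct, and for the injectivity part it coincides with the paper: both arguments reduce to an affine chart with coordinates $x_1,\ldots,x_M$ and compare the explicit descriptions of $\Ga(U,\sD_{P,\bbQ})$ (finite sums $\sum a_{\unu}\uder^{[\unu]}$ with $a_{\unu}\in\cO_{U}\otimes\bbQ$) and $\Ga(U,\alpha_*\sD^{\dagger}_{\cP})$ (overconvergent series with coefficients in $\cO_{\frU}\otimes\bbQ$). For flatness, however, you take a genuinely different route. The paper factors the extension through level zero: it quotes Berthelot's result \cite[Cor.\ 3.5.4]{BerthelotDI} that $\sD^{\dagger}_{\cP}$ is flat over $\hsD^{(0)}_{\cP,\bbQ}$, and combines it with flatness of the completion map $\sD^{(0)}_{P}\rightarrow\hsD^{(0)}_{\cP}$ (noetherian $p$-adic completion), using that $\osD_{P_{\bbQ}}\simeq\alpha^{-1}\sD^{(0)}_{P,\bbQ}$. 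You instead prove flatness of $\osD_{P_{\bbQ}}\rightarrow\hsD^{(m)}_{\cP,\bbQ}$ separately for \emph{every} level $m$ (again by noetherian completion plus inverting $p$, using that $\sD^{(m)}_{P,\bbQ}\simeq j_*\sD_{P_{\bbQ}}$ for all $m$), and then conclude by writing $\sD^{\dagger}_{\cP}=\varinjlim_m\hsD^{(m)}_{\cP,\bbQ}$ and using that a filtered colimit of flat modules over the fixed ring $\osD_{P_{\bbQ}}$ is flat. This is perfectly valid and has the advantage of bypassing Berthelot's nontrivial flatness theorem for $\sD^{\dagger}$ over $\hsD^{(0)}_{\bbQ}$, at the (modest) price of invoking the completion-flatness fact at every level rather than only at $m=0$; the paper's version outsources more to the literature. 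One remark: the ``main obstacle'' you flag at the end is not an obstacle at all --- you do not need $\hsD^{(m)}_{\cP,\bbQ}$ to be free (topologically or otherwise) over $\osD_{P_{\bbQ}}$ on the symbols $\uder^{\uk}$ (it is not), since your colimit-of-flat-modules argument already closes the proof once the per-level flatness is secured by the noetherianity of $\Ga(U,\sD^{(m)}_{P})$ on affines with coordinates.
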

\begin{proof} If $U\subset P$ is an open affine of $P$ with local coordinates $x_1,...,x_M$, then  
the following description 
$$ \Ga(U,\alpha_*(\sD^{\dagger}_{\cP})) =\left\{\sum_{\unu}  a_{\unu}\uder^{[ \unu ]}\,|\, a_{\unu}\in 
\cO_{\frU}\otimes\bbQ\, | \,
\exists c>0, \eta<1,\,||a_{\unu}||\leq c\eta^{|\unu|}\right\}$$ and 
$$ \Ga(U,\sD_{P,{\bbQ}}) =\left\{\sum_{\unu, finite}  a_{\unu}\uder^{[ \unu ]} \,|\, a_{\unu}\in 
\cO_{U}\otimes\bbQ \right\}.$$
This gives the inclusion. Next, the ring $\sD^{\dagger}_{\cP}$ is flat over 
$\hsD_{\cP,\bbQ}^{(0)}$ \cite[Cor. 3.5.4]{BerthelotDI}. Moreover the sheaf $\hsD_{\cP}^{(0)}$
 is flat over $\sD_{P}^{(0)}$, by completion, 
so that $\sD^{\dagger}_{\cP}$ is indeed flat over $\osD_{P_{\bbQ}}$. 
\end{proof}
The proof of the following lemma is easy and left to the reader.
\begin{lemma}\begin{enumerate} \item Let $\cE$ be a coherent $\sD_{P_{\bbQ}}$-module, then
$\sD^{\dagger}_{\cP}\ot_{\osD_{P_{\bbQ}}}\ocE$ is a coherent $\sD^{\dagger}_{\cP}$-module.
\item Let $\cE\in D^b_{coh}(\sD_{P_{\bbQ}})$, then 
     $\sD^{\dagger}_{\cP}\ot_{\osD_{P_{\bbQ}}}\ocE\, \in D^b_{coh}(\sD^{\dagger}_{\cP}).$
\end{enumerate}
\end{lemma}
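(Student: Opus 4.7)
The plan is to deduce both parts from three ingredients already assembled: the flatness of $\sD^{\dagger}_{\cP}$ over $\osD_{P_{\bbQ}}$ (Lemma~\ref{platitude}), the exactness of the functor $\cE\mapsto\ocE=\alpha^{-1}j_{*}\cE$, and the compatibilities $\alpha^{-1}j_{*}\cO_{P_{\bbQ}}\simeq\cO_{P,\bbQ}$ and $\alpha^{-1}j_{*}\sD_{P_{\bbQ}}\simeq\osD_{P_{\bbQ}}$ established just above. Together these let us transport a finite presentation of $\cE$ over $\sD_{P_{\bbQ}}$ into a finite presentation of $\sD^{\dagger}_{\cP}\otimes_{\osD_{P_{\bbQ}}}\ocE$ over $\sD^{\dagger}_{\cP}$.

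For (i), I would first reduce to the affine local situation: coherence is local on $\cP$, and every affine open of $\cP$ has the form $\widehat{U}$ for an affine open $U\subset P$ carrying local coordinates. Over such a $U$ the ring $\Gamma(U_{\bbQ},\sD_{P_{\bbQ}})$ is noetherian, so the coherent module $\cE|_{U_{\bbQ}}$ admits a finite presentation
\[ \sD_{P_{\bbQ}}^{\,a}|_{U_{\bbQ}}\longrightarrow\sD_{P_{\bbQ}}^{\,b}|_{U_{\bbQ}}\longrightarrow \cE|_{U_{\bbQ}}\longrightarrow 0. \]
Applying $\alpha^{-1}j_{*}$, which is exact on quasi-coherent sheaves since $j$ is affine and $\alpha^{-1}$ is exact in general, yields by the preliminary lemmas a presentation of $\ocE|_{\widehat{U}}$ by finite free $\osD_{P_{\bbQ}}$-modules. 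Tensoring with $\sD^{\dagger}_{\cP}$ over $\osD_{P_{\bbQ}}$ preserves exactness by Lemma~\ref{platitude} and sends $\osD_{P_{\bbQ}}^{\,n}$ to $(\sD^{\dagger}_{\cP})^{n}$, producing a finite free presentation of $\sD^{\dagger}_{\cP}\otimes_{\osD_{P_{\bbQ}}}\ocE$ on $\widehat{U}$. Since $\sD^{\dagger}_{\cP}$ is a coherent sheaf of rings, this forces the tensor product to be coherent.

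For (ii), flatness is again the engine: the naive tensor product $\sD^{\dagger}_{\cP}\otimes_{\osD_{P_{\bbQ}}}(-)$ coincides with its left-derived version and therefore sends distinguished triangles to distinguished triangles and preserves boundedness. I would proceed by induction on the amplitude of $\cE$, applying the functor to the truncation triangle
\[ \tau_{\leq n-1}\cE\longrightarrow\tau_{\leq n}\cE\longrightarrow \cH^{n}(\cE)[-n]\longrightarrow {+1}, \]
invoking part~(i) for the coherent cohomology sheaf $\cH^{n}(\cE)$ and using closure of $D^{b}_{coh}(\sD^{\dagger}_{\cP})$ under cones to propagate coherence along the triangle.

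The only point requiring some care is that the finite presentation of $\cE|_{U_{\bbQ}}$ lives sheaf-theoretically on $U_{\bbQ}$, whereas we need one on the formal open $\widehat{U}\subset\cP$; this translation is precisely what the two preliminary lemmas provide, after which everything is formal. This is what makes the lemma routine, and consistent with the authors' remark that the proof is left to the reader.
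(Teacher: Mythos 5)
The paper leaves this lemma's proof to the reader, so there is no text to compare against; your argument is correct and supplies exactly the routine proof the authors had in mind. Two small remarks: in part (i) you do not actually need the full strength of the flatness from Lemma~\ref{platitude} — right-exactness of $\sD^{\dagger}_{\cP}\otimes_{\osD_{P_{\bbQ}}}(-)$ already carries the finite free presentation of $\ocE|_{\widehat U}$ to a finite free presentation of $\sD^{\dagger}_{\cP}\otimes_{\osD_{P_{\bbQ}}}\ocE|_{\widehat U}$, and coherence then follows because $\sD^{\dagger}_{\cP}$ is a coherent sheaf of rings; and in part (ii) the truncation-triangle induction, while valid, can be short-circuited by noting that the composite functor $\cE\mapsto\sD^{\dagger}_{\cP}\otimes_{\osD_{P_{\bbQ}}}\ocE$ is exact (exactness of $\cE\mapsto\ocE$ composed with flatness), hence commutes with passage to cohomology sheaves, so $\cH^i$ of the output is $\sD^{\dagger}_{\cP}\otimes_{\osD_{P_{\bbQ}}}\overline{\cH^i(\cE)}$, which is coherent by (i), and the complex stays bounded. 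Either way the result follows; your write-up is sound.
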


\vskip5pt
The following proposition is due to Virrion and shows that duality commutes with scalar extension. 
Her formulation involves perfect complexes, but since the scheme $P_{\bbQ}$ is smooth, the sheaf 
$\osD_{P_{\bbQ}}$ has finite cohomological dimension and the category $D^b_{coh}(\osD_{P_{\bbQ}})$ coincides with the 
category of perfect complexes of $\osD_{P_{\bbQ}}$-modules.
\begin{prop}\label{compat_dual} Let $\cE\in D^b_{coh}(\sD_{P_{\bbQ}})$,
then 
  $$\sD^{\dagger}_{\cP}\ot_{\osD_{P_{\bbQ}}}\bbD_{\osD_{P_{\bbQ}}}(\ocE) \simeq 
                            \bbD_{\sD^{\dagger}_{\cP}}(\sD^{\dagger}_{\cP}\ot_{\osD_{P_{\bbQ}}}\ocE) .$$
\end{prop}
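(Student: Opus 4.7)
The statement is a scalar-extension compatibility for Virrion's duality functor, and the natural strategy is to reduce it to the elementary fact that for a finite free $A$-module $F$ and a flat extension $A \hookrightarrow B$, one has a canonical isomorphism $B \otimes_A \mathrm{Hom}_A(F,A) \simeq \mathrm{Hom}_B(B\otimes_A F, B)$. The relevant flat extension here is $\osD_{P_{\bbQ}} \hookrightarrow \sD^{\dagger}_{\cP}$ from Lemma \ref{platitude}.

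First, the question is local on $\cP$, so we may assume $P = \Spec A$ is affine with local étale coordinates $x_1, \ldots, x_M$. Since $P_{\bbQ}$ is smooth of dimension $M$, the sheaf $\osD_{P_{\bbQ}}$ has finite cohomological dimension and the category $D^b_{coh}(\osD_{P_{\bbQ}})$ coincides with the category of perfect complexes. Locally we may therefore choose a quasi-isomorphism $\cF^{\bullet} \car \ocE$ with $\cF^{\bullet}$ a bounded complex of finite free left $\osD_{P_{\bbQ}}$-modules.

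Next, I would unpack Virrion's duality: $\bbD_{\sD^{\dagger}_{\cP}}(-) = R\mathcal{H}om_{\sD^{\dagger}_{\cP}}(-,\, \sD^{\dagger}_{\cP}\ot_{\cO_{\cP}}\omega_{\cP}^{-1})[M]$, and analogously on $\osD_{P_{\bbQ}}$ with $\omega_{P_{\bbQ}}^{-1}$. The twist by $\omega$ is compatible with the scalar extension because on a smooth scheme the canonical sheaf commutes with formal completion, yielding $\omega_{\cP,\bbQ} \simeq \alpha^{*}\omega_{P_{\bbQ}}$. Stripping the shift and the $\omega$-twist, the essential content becomes the isomorphism
$$\sD^{\dagger}_{\cP}\ot_{\osD_{P_{\bbQ}}} R\mathcal{H}om_{\osD_{P_{\bbQ}}}(\ocE,\, \osD_{P_{\bbQ}}) \;\car\; R\mathcal{H}om_{\sD^{\dagger}_{\cP}}\bigl(\sD^{\dagger}_{\cP}\ot_{\osD_{P_{\bbQ}}}\ocE,\, \sD^{\dagger}_{\cP}\bigr).$$
Applying this termwise to $\cF^{\bullet}$, each $\cF^{i}$ is a finite free $\osD_{P_{\bbQ}}$-module, and the natural evaluation-after-extension map identifies both sides with finite direct sums of copies of $\sD^{\dagger}_{\cP}$. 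Flatness of $\sD^{\dagger}_{\cP}$ over $\osD_{P_{\bbQ}}$ from Lemma \ref{platitude} then allows us to identify the ordinary tensor product $\sD^{\dagger}_{\cP}\ot_{\osD_{P_{\bbQ}}}-$ with the derived tensor product, so the termwise isomorphisms assemble into the sought quasi-isomorphism in $D^b_{coh}(\sD^{\dagger}_{\cP})$.

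The main obstacle is not calculational but bookkeeping: one must check that the constructed isomorphism is independent of the chosen local resolution and is natural in $\cE$, so that it globalizes from the local affine patches back to $\cP$. This is the standard independence-of-resolution argument for perfect complexes. A minor subsidiary point is verifying the compatibility $\omega_{\cP,\bbQ}\simeq\alpha^{*}\omega_{P_{\bbQ}}$, which follows from the commutation of the module of Kähler differentials with $p$-adic completion in the smooth case.
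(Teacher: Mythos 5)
Your proposal is mathematically correct, but it takes a different route from the paper in an instructive sense: the paper simply invokes Virrion's base-change theorem for duality of perfect complexes, \cite[1.4, 4.4]{Virrion-dual}, after observing (just before the proposition) that $\osD_{P_{\bbQ}}$ has finite cohomological dimension so that $D^b_{coh}(\osD_{P_{\bbQ}})$ consists of perfect complexes. You, instead, unwind that theorem and reprove it in the case at hand: reduce to an affine chart, replace $\ocE$ by a bounded complex $\cF^{\bullet}$ of finite free $\osD_{P_{\bbQ}}$-modules, use flatness of $\sD^{\dagger}_{\cP}$ over $\osD_{P_{\bbQ}}$ (Lemma \ref{platitude}) to identify derived and ordinary tensor, apply termwise the elementary base-change isomorphism $B\otimes_A\Hom_A(F,A)\simeq\Hom_B(B\otimes_AF,B)$ for $F$ finite free, and handle the $\omega$-twist and shift separately. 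This is precisely the argument underlying Virrion's theorem, so your version is more self-contained while the paper's is shorter; both are sound. One small notational slip: you write $\omega_{\cP,\bbQ}\simeq\alpha^{*}\omega_{P_{\bbQ}}$, but $\alpha$ maps $\cP\to P$, not to $P_{\bbQ}$; the compatibility of the canonical bundle should rather be phrased using the diagram $\cP\xrightarrow{\alpha}P\xleftarrow{j}P_{\bbQ}$ (so $\omega_{\cP}\simeq\alpha^{*}\omega_{P}$ and $\overline{\omega_{P_{\bbQ}}}=\alpha^{-1}j_{*}\omega_{P_{\bbQ}}\simeq\omega_{\cP}\otimes\bbQ$). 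The independence-of-resolution and gluing step is standard and your brief treatment of it is acceptable.
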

\begin{proof} This is \cite[1.4,4.4]{Virrion-dual}.
\end{proof}
Let us recall that, if $d=\dim(P_{\bbQ})$, 
$$\bbD_{\sD_{P_{\bbQ}}}(\cE)= R \cH om_{\sD_{P_{\bbQ}}}(\cE,\sD_{P_{\bbQ}}[d])
\ot_{\cO_{P_{\bbQ}}}\omega_{P_{\bbQ}}.$$

\vskip5pt
\begin{dfn}\label{deftrans} Let $P$ be a smooth $\fro$-scheme and $Z\subset P$ a divisor. We say that $Z$ 
is a {\it transversal divisor} if $Z_s$ and $Z_{\bbQ}$ are divisors respectively of $P_s$ and $P_{\bbQ}$.
\end{dfn}

Let $Z$ be a transversal divisor and let $j: P\backslash Z \hookrightarrow P$ be the inclusion of its open complement. 
For any coherent $\sD_{P_{\bbQ}}$-module $\cE$, we put
$$(*Z_{\bbQ})\cE:=\sD_{P_{\bbQ}}(*Z_{\bbQ})\ot_{\sD_{P_{\bbQ}}}\cE$$
so that $(*Z_{\bbQ})\cE=j_{\Q+} j_{\Q!}\cE$. In the same spirit, we define for any coherent
$\sD^{\dagger}_{\cP}$-module $\cE$,
$$(^{\dagger}Z_s)\cE:= \sD^{\dagger}_{\cP}(^{\dagger}Z_s)\ot_{\sD^{\dagger}_{\cP}}{\cE}.$$
Let $Y=P\backslash Z$ with immersion $j: Y \hra P$ and consider the morphism of frames $$J: \bbY:=(Y_s,P_s,\cP)\ra \bbP:=(P_s,P_s,\cP).$$ 
Then $(^{\dagger}Z_s)\cE=J_+J^!\cE$. Note that, in this situation, $J_+$ is just the forgetful functor from the category 
$\text{Ovhol}(\bbY/K)$ to the category $\text{Ovhol}(\bbP/K)$. Moreover the functor $j_{\Q+}$ is exact since $Z_{\Q}$ is a divisor of $P_{\Q}$,
 and induces an equivalence of categories between coherent 
$\sD_{P_{\Q}}(*Z_{\Q})$-modules and coherent  $\sD_{Y_{\Q}}$-modules. Of course, at the level of sheaves of 
$\cO_{Y_{\Q}}$-modules, we have $j_{\Q+}=j_{\Q*}$. Recall also
that, in this situation, objects of $\text{Ovhol}(\bbY/K)$ consist of degree zero complexes of
$\sD^{\dagger}_{\cP}(^{\dagger}Z_s)$-modules by ~\cite[Remark 1.2.7 (iii)]{AbeCaro}.
\begin{prop} \label{bidual} Let $\cE\in D^b_{hol}({Y_{\bbQ}})$ and suppose that 
$\cF:=\sD^{\dagger}_{\cP}\ot_{\osD_{P_{\bbQ}}}\overline{j_{\Q +}\cE}\in D^b_{ovhol}({\bbY}).$ Then there is a commutative diagram
$$\xymatrix{ \overline{j_{\Q+}\cE} \ar@{->}[r]^{\overline{j_{\Q+} c_{\Q}}} \ar@{->}[d]^{1 \ot id_{j_{\Q+}\cE}} &
\overline{j_{\Q +}\bbD_{Y_{\Q}}\circ\bbD_{Y_{\Q}}(\cE)}\ar@{->}[d]^{}_{} \\
 J_+(\sD^{\dagger}_{\cP}\ot_{\osD_{P_{\bbQ}}}\overline{j_{\Q +}\cE}\ar@{->}[r]^>>>>>{J_+C}) &
  J_+\bbD_{\bbY}\circ\bbD_{\bbY}\left(\sD^{\dagger}_{\cP}\ot_{\osD_{P_{\bbQ}}}
\overline{j_{\Q +}\cE}\right) 
. }$$

Here,
$c_{\Q}$ is the canonical isomorphism
${\cE}\simeq \bbD_{Y_{\Q}}\circ\bbD_{Y_{\Q}}(\cE)$  and $C$ is the canonical isomorphism
${\cF}\simeq \bbD_{\bbY}\circ\bbD_{\bbY}(\cF)$. 
\end{prop}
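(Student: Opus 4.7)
The idea is to reduce the commutativity of the square to the compatibility of duality with scalar extension (Proposition~\ref{compat_dual}, due to Virrion) combined with the naturality of the biduality unit. Since biduality is a natural transformation $\mathrm{id}\Rightarrow\bbD\circ\bbD$, once the right vertical arrow is rewritten as the image of the left vertical arrow under a natural operation, commutativity should follow essentially from naturality.

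First, I apply Proposition~\ref{compat_dual} twice to produce a natural isomorphism
$$\sigma\colon \sD^{\dagger}_{\cP}\otimes_{\osD_{P_\Q}} \bbD_{\osD_{P_\Q}}\bbD_{\osD_{P_\Q}}\bigl(\overline{j_{\Q+}\cE}\bigr)\;\car\;\bbD_{\sD^{\dagger}_{\cP}}\bbD_{\sD^{\dagger}_{\cP}}(\cF).$$
Using that $j_\Q$ is an affine open immersion, so that $j_{\Q+}$ commutes with the formation of biduality on $Y_\Q$, and that $\overline{(-)}=\alpha^{-1}j_*$ is exact and compatible with $\sD$-module duality (as a consequence of Lemma~\ref{platitude} and the flatness of $\osD_{P_\Q}\hookrightarrow\sD^{\dagger}_{\cP}$), the source of $\sigma$ identifies with $\sD^{\dagger}_{\cP}\otimes_{\osD_{P_\Q}}\overline{j_{\Q+}\bbD_{Y_\Q}\bbD_{Y_\Q}(\cE)}$. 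Moreover, since $J_+$ is the forgetful functor from $\mathrm{Ovhol}(\bbY/K)$ to $\mathrm{Ovhol}(\bbP/K)$, the object $J_+\bbD_{\bbY}\bbD_{\bbY}(\cF)$ agrees with $\bbD_{\sD^{\dagger}_{\cP}}\bbD_{\sD^{\dagger}_{\cP}}(\cF)$ as a $\sD^{\dagger}_{\cP}$-module. The right vertical arrow in the diagram is then identified with the composite $J_+\sigma\circ(1\otimes\mathrm{id})$, where $1\otimes\mathrm{id}$ is the canonical map $\overline{j_{\Q+}\bbD_{Y_\Q}\bbD_{Y_\Q}(\cE)}\to\sD^{\dagger}_{\cP}\otimes_{\osD_{P_\Q}}\overline{j_{\Q+}\bbD_{Y_\Q}\bbD_{Y_\Q}(\cE)}$.

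With this identification in place, commutativity of the square follows from applying the naturality of the biduality natural transformation $\mathrm{id}\Rightarrow\bbD\circ\bbD$ to the morphism of $\osD_{P_\Q}$-modules $\overline{j_{\Q+}\cE}\to J_+\cF$ induced by $1\otimes\mathrm{id}$. The top row is then recognized as $\overline{j_{\Q+}c_\Q}$ via naturality of biduality with respect to the exact functor $\overline{(-)}\circ j_{\Q+}$. The main obstacle is verifying that Virrion's isomorphism $\sigma$ really intertwines the two biduality \emph{unit} maps, and not merely the two duality functors abstractly. This amounts to checking that the construction of Proposition~\ref{compat_dual} in \cite{Virrion-dual} is compatible with the adjunction data producing $c$ and $C$, which is implicit in Virrion's approach via a quasi-isomorphism of dualizing complexes but must be extracted carefully; once granted, the remainder of the argument is formal.
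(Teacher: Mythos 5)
Your strategy is essentially the paper's: use Virrion's scalar-extension compatibility of duality (Proposition~\ref{compat_dual}, applied in the form needed over $\osD_{P_\Q}(*Z_\Q)\hookrightarrow\sD^{\dagger}_{\cP}(^{\dagger}Z_s)$) to rewrite the right-hand column so that the square becomes a scalar-extension square, and then check that the two biduality units match up. The difficulty you single out at the end — that Virrion's isomorphism must be shown to intertwine the two \emph{unit} maps $c_\Q$ and $C$, and not merely the two duality functors — is indeed the entire content of the proposition, and your proposal leaves it unproved. So there is a genuine gap.

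Two more precise issues. First, the appeal to ``naturality of the biduality natural transformation $\mathrm{id}\Rightarrow\bbD\circ\bbD$'' is not available in the form you invoke it: $\overline{j_{\Q+}\cE}$ and $J_+\cF$ do not live in a single triangulated category equipped with a single duality. The source is a $\osD_{P_\Q}$-module with biduality unit $c_\Q$ built from $\bbD_{Y_\Q}$, while the target is a $\sD^{\dagger}_{\cP}$-module with biduality unit $C$ built from $\bbD_{\bbY}$. The map $1\otimes\mathrm{id}$ is a $\osD_{P_\Q}$-linear base-change map between objects over two different rings, so there is no single natural transformation whose naturality square you could invoke; the compatibility of $c_\Q$ and $C$ is exactly what must be established by hand. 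Second, after reducing (as the paper does) to a single holonomic module $\cE$ placed in degree zero, so that all complexes are honest modules, one can make the two unit maps completely explicit: each is the evaluation map $x\mapsto ev_x$, $ev_x(\varphi)=\varphi(x)$, into the double-Hom. After rewriting $\bbD_{\bbY}\bbD_{\bbY}(\cF)$ via three applications of Virrion's theorem as $\sD^{\dagger}_{\cP}(^{\dagger}Z_s)\otimes_{\osD_{P_\Q}(*Z_\Q)}\overline{j_{\Q+}\bbD_{Y_\Q}\bbD_{Y_\Q}\cE}$, the commutativity becomes the elementary identity $(1\otimes\mathrm{id})(ev_x)=ev(1\otimes x)$ for a local section $x$ of $\overline{j_{\Q+}\cE}$. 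This explicit verification is the missing step your proof plan defers to ``Virrion's approach via a quasi-isomorphism of dualizing complexes,'' and the paper does carry it out directly.
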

\begin{proof} Let us first remark that the sheaf $\sD^{\dagger}_{\cP}(^{\dagger}Z_s)$ is flat over 
$\sD^{\dagger}_{\cP}$ and thus flat over $\osD_{P_{\bbQ}}$. This explains that no derived tensor product appears 
in the stated diagram. Moreover it will be enough to prove the statement for 
a single holonomic $\sD_{Y_{\Q}}$-module $\cE$ such that 
$\cF:=\sD^{\dagger}_{\cP}\ot_{\osD_{P_{\bbQ}}}\overline{j_{\Q*}\cE}$ is an overholonomic 
module over $\bbY$. In this case, all complexes are single modules in degree zero. The top horizontal arrow of the diagram
is induced by the following map : $$ \xymatrix@R=0pc {  \cE  \ar@{->}[r] & \cH om_{\sD_{Y_{\Q}}} (\cH
om_{\sD_{Y_{\Q}}}(\cE,\sD_{Y_{\Q}})) \ar@{->}[r] & \bbD_{Y_{\Q}}\circ \bbD_{Y_{\Q}}(\cE ) \\
   x \ar@{|->}[r] & ev_x(\varphi)=\varphi(x). & }$$
Recall that $C$ : $\cF \ra \bbD_{\bbY}\bbD_{\bbY}\cF$ is defined as follows in our case : as $\cF$ is overholonomic 
over $\bbY$, we have the identifications
$$\cF\simeq
(^{\dagger}Z_s)\cF=\sD^{\dagger}_{\cP}(^{\dagger}Z_s)\ot_{\sD^{\dagger}_{\cP}}\cF.$$
We therefore deduce, using the base change result \cite[1.4,4.4]{Virrion-dual}, that 
\begin{align*} \bbD_{\bbY}(\cF) &= \sD^{\dagger}_{\cP}(^{\dagger}Z_s)\ot_{\sD^{\dagger}_{\cP}}\bbD_{\bbP}(\cF) \\
                                &= R \cH om_{\sD^{\dagger}_{\cP}(^{\dagger}Z_s)}(\cF,\sD^{\dagger}_{\cP}(^{\dagger}Z_s)[d])
                               \ot_{\cO_{\cP}}\omega_{\cP},\\
                                &= R \cH om_{\sD^{\dagger}_{\cP}(^{\dagger}Z_s)}(\cF,\sD^{\dagger}_{\cP}(^{\dagger}Z_s)[d])
                            \ot_{\cO_{\cP}(^{\dagger}Z_s)}\omega_{\cP}(^{\dagger}Z_s),
\end{align*}
and the canonical map $C$ is then given by the following composition 
$$ \xymatrix{\cF  \ar@{->}[r] & \cH om_{\sD^{\dagger}_{\cP}(^{\dagger}Z_s)} (\cH
om_{\sD^{\dagger}_{\cP}(^{\dagger}Z_s)}(\cF,\sD^{\dagger}_{\cP}(^{\dagger}Z_s))) \ar@{->}[r] & 
\bbD_{\bbY}\circ \bbD_{\bbY}(\cF )}.$$
Note that $C$ is an isomorphism since it is an isomorphism when restricted to $\cP\setminus Z $ by ~\cite[4.3.10]{BerthelotDI}.
Moreover one has a canonical isomorphism
$$\cF \simeq \sD^{\dagger}_{\cP}(^{\dagger}Z_s)\ot_{\osD_{P_{\Q}}(*Z_{\Q})}\overline{j_{\Q+}\cE},$$
so that we can use again~\cite[1.4,4.4]{Virrion-dual}, to get the following identification
$$ \bbD_{\bbY}(\cF)\simeq \sD^{\dagger}_{\cP}(^{\dagger}Z_s)\ot_{\osD_{P_{\Q}}(*Z_{\Q})}
R \cH
om_{\osD_{P_{\Q}}(*Z_{\Q})}(\overline{j_{\Q+}\cE},\osD_{P_{\Q}}(*Z_{\Q})[d])
\ot_{\ocO_{P}}\overline{\omega}_{P_{\Q}}(*Z_{\Q}),$$
$$ \bbD_{\bbY}(\cF)\simeq \sD^{\dagger}_{\cP}(^{\dagger}Z_s)\ot_{\osD_{P_{\Q}}(*Z_{\Q})}
  \overline{j_{\Q+}\bbD_{Y_{\Q}}\cE}.$$
Here, we identify
 $${j_{\Q+}\bbD_{Y_{\Q}}\cE}=R \cH om_{\sD_{P_{\Q}}(*Z_{\Q})}({j_{\Q+}\cE},\sD_{P_{\Q}}(*Z_{\Q})[d])
\ot_{\cO_{P}}{\omega}_{P_{\Q}}(*Z_{\Q}).$$ 
Using again ~\cite[1.4,4.4]{Virrion-dual} applied to the sheaves $\osD_{P_{\Q}}(*Z_{\Q})$ and 
$\sD^{\dagger}_{\cP}(^{\dagger}Z_s)$, we find a canonical isomorphism 
$$ \bbD_{\bbY}\bbD_{\bbY}(\cF)\simeq \sD^{\dagger}_{\cP}(^{\dagger}Z_s)\ot_{\osD_{P_{\Q}}(*Z_{\Q})}
 \overline{ j_{\Q+}\bbD_{Y_{\Q}}\bbD_{Y_{\Q}}\cE},$$ that allows us to write the diagram of the statement in the 
following way
$$\xymatrix{ \overline{j_{\Q+}\cE} \ar@{->}[r]^>>>>>>>>>>>>>>>>>{\overline{j_{\Q+} c_{\Q}}} \ar@{->}[d]^{1 \ot id_{j_{\Q+}\cE}} &
\overline{j_{\Q +}\bbD_{Y_{\Q}}\circ\bbD_{Y_{\Q}}(\cE)}\ar@{->}[d]^{1 \ot id}_{} \\
 J_+(\sD^{\dagger}_{\cP}(^{\dagger}Z_s)\ot_{\osD_{P_{\bbQ}}(*Z_s)}\overline{j_{\Q +}\cE}\ar@{->}[r]^>>>>>{J_+C}) &
 \sD^{\dagger}_{\cP}(^{\dagger}Z_s)\ot_{\osD_{P_{\Q}}(*Z_{\Q})}\overline{j_{\Q+}\bbD_{Y_{\Q}}\bbD_{Y_{\Q}}\cE}.  
 }$$

The commutativity of this diagram comes then from the fact that if $x$ is a local section of 
$\overline{j_{\Q+}\cE}$, we have $(1 \ot id)(ev_x)=ev(1 \ot x)$.
\end{proof}
Under the same hypothesis as in the previous proposition ($Z$ is a transversal divisor of $P$) and with the same 
notations, we have the
\begin{cor} \label{bidual2} Let $\cE\in D^b_{hol}({Y_{\bbQ}})$ and suppose that 
$\cF=\sD^{\dagger}_{\cP}\ot_{\osD_{P_{\bbQ}}}\overline{j_{\Q +}\cE}\in D^b_{ovhol}({\bbY}).$
There is a commutative diagram
$$\xymatrix{ \overline{j_{\Q+}} \cE  \ar@{->}[r]^{\sim}\ar@{->}[d] &\overline{j_{\Q+}j_{\Q}^!j_{\Q!} \cE} \ar@{->}[d]\\
   \sD^{\dagger}_{\cP}\ot_{\osD_{P_{\bbQ}}}\overline{ j_{\Q+}\cE }\ar@{->}[r]^<<<<{\sim} & 
     J_+J^!J_! (\sD^{\dagger}_{\cP}\ot_{\osD_{P_{\bbQ}}}\overline{j_{\Q +}\cE})
. }$$
\end{cor}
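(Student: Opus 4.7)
The plan is to deduce this corollary directly from the preceding Proposition \ref{bidual}, by reinterpreting the biduality isomorphisms appearing there as instances of the canonical isomorphism $\cE \simeq j^! j_! \cE$ valid for any open immersion whose complement is a divisor. First, I would invoke the classical identity $j_{\Q!} \simeq \bbD_{P_\Q} \circ j_{\Q+} \circ \bbD_{Y_\Q}$ on the generic fibre. Combined with the compatibility $j_\Q^! \bbD_{P_\Q} \simeq \bbD_{Y_\Q} j_\Q^+$ and $j_\Q^+ j_{\Q+} = {\rm id}$ for the open immersion $j_\Q$, this produces a natural isomorphism
$$ j_\Q^! j_{\Q!}\cE \;\simeq\; \bbD_{Y_\Q}\bbD_{Y_\Q}\cE.$$
Exactly the same formal manipulations, applied to the $c$-open immersion of frames $J\colon \bbY \rightarrow \bbP$, yield a natural isomorphism $J^! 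J_! \cF \simeq \bbD_{\bbY}\bbD_{\bbY}\cF$ on the crystalline side.

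Next, I would apply $j_{\Q+}$ and $J_+$ respectively, and pass through $\overline{(-)}$ on top, to transport these identifications onto the two horizontal arrows of the target diagram: composing with the biduality isomorphisms $c_\Q\colon \cE \simeq \bbD_{Y_\Q}\bbD_{Y_\Q}\cE$ and $C\colon \cF \simeq \bbD_{\bbY}\bbD_{\bbY}\cF$, the top row of the diagram in bidual2 becomes the top row of the diagram in Proposition \ref{bidual}, and similarly for the bottom rows. Since in both diagrams the vertical arrows are given by the extension of scalars along $\osD_{P_{\bbQ}} \rightarrow \sD^{\dagger}_{\cP}$, the commutativity stated in bidual2 then follows formally, by transport of structure, from the commutativity established in Proposition \ref{bidual}.

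The main obstacle, and the only point where real care is needed, is to verify that the identifications $j_\Q^! j_{\Q!} \simeq \bbD_{Y_\Q}\bbD_{Y_\Q}$ on the generic fibre and $J^! J_! \simeq \bbD_{\bbY}\bbD_{\bbY}$ on the crystalline side are compatible with the extension of scalars: concretely, that tensoring the former over $\osD_{P_{\bbQ}}(*Z_{\bbQ})$ with $\sD^{\dagger}_{\cP}({}^{\dagger}Z_s)$ reproduces the latter. This is a naturality statement, and I expect it to follow from Virrion's base change for duality (Proposition \ref{compat_dual}), already used in the proof of bidual, together with the flatness of $\sD^{\dagger}_{\cP}$ over $\osD_{P_{\bbQ}}$ established in Lemma \ref{platitude}. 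Once this naturality is secured, the corollary becomes a direct consequence of Proposition \ref{bidual}.
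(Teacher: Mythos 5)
Your proposal matches the paper's own proof: it too establishes $j_{\Q}^! j_{\Q!} \simeq \bbD_{Y_\Q}\bbD_{Y_\Q}$ on the generic fibre via $j_{\Q!}=\bbD_{P_\Q} j_{\Q+}\bbD_{Y_\Q}$, $j_\Q^!\bbD_{P_\Q}=\bbD_{Y_\Q}j_\Q^!$, $j_\Q^! j_{\Q+}={\rm id}$, mirrors the computation for $J^!J_!$ on the crystalline side using that $J^!$ is scalar extension (so Virrion's compatibility gives $J^!\bbD_{\bbP}=\bbD_{\bbY}J^!$) and $J^!J_+\simeq{\rm id}$, and thereby reduces the diagram to that of Proposition \ref{bidual}. (Minor typos: $j_\Q^+$ should read $j_\Q^!$ in the two compatibility formulas you quote.)
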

\begin{proof} We have the following equality as functors on $D^b_{hol}({Y_{\bbQ}})$
\begin{align*} j_{\Q}^! j_{\Q!} & = j_{\Q}^!\bbD_{P_{\Q}}j_{\Q+} \bbD_{Y_{\Q}} \\
                                   & = \bbD_{Y_{\Q}} j_{\Q}^!j_{\Q+} \bbD_{Y_{\Q}} \\
                                    & = \bbD_{Y_{\Q}}\bbD_{Y_{\Q}}\simeq {\rm id}.
\end{align*}
On the other hand, let us notice that $J^!=\sD^{\dagger}_{\cP}(^{\dagger}Z_s)\ot_{\sD^{\dagger}_{\cP}}(-)$ is a scalar 
extension, so that again by \cite[1.4,4.4]{Virrion-dual}, $J^!\bbD_{\bbP}=\bbD_{\bbY}J^!$. Moreover 
for $\cF\in D^b_{\rm ovhol}(\bbY)$ one has $$J^!J_+ \cF=\sD^{\dagger}_{\cP}(^{\dagger}Z_s)\ot_{\sD^{\dagger}_{\cP}} \cF \simeq
\cF,$$ essentially by definition of the objects of $D^b_{\rm ovhol}(\bbY)$. Using these remarks, we compute
\begin{align*} J^! J_! & = J^! \bbD_{\bbP} J_+ \bbD_{\bbY} \\
                       & =   \bbD_{\bbY} J^! J_+ \bbD_{\bbY} \\
                       & = \bbD_{\bbY} \bbD_{\bbY}\simeq {\rm id},
\end{align*}
so that the diagram of the corollary is the same as the diagram of the previous proposition~\ref{bidual}.
\end{proof}

We next give another compatibility statement.
\begin{prop} \label{adjoint} Let $\cE\in D^b_{hol}({P_{\bbQ}})$ and suppose that
$\cF=\sD^{\dagger}_{\cP}\ot_{\osD_{P_{\bbQ}}}\overline{j_{\Q +}\cE}\in D^b_{ovhol}({\bbP}).$ 
Let $can: \cE \ra j_{\Q+}j_{\Q}^! \cE$ and 
$CAN: \cF \ra J_+J^! \cF$ be the canonical morphisms. Then the following diagram is commutative
$$\xymatrix{ \cE   \ar@{->}[r]^{can} \ar@{->}[d] & j_{\Q+}j_{\Q}^! \cE \ar@{->}[d]\\
                   \sD^{\dagger}_{\cP}\ot_{\osD_{P_{\bbQ}}}\cE\ar@{->}[r]^>>>>>{CAN} & J_+J^!
(\sD^{\dagger}_{\cP}\ot_{\osD_{P_{\bbQ}}}\cE).
}$$
\end{prop}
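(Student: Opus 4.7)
The plan is to reduce the commutativity of the square to two facts: the explicit description of the adjunction units $can$ and $CAN$ as structural maps of localization, and the associativity of tensor products of sheaves of rings. Since $Z$ is a transversal divisor, $j_\Q^!$ amounts to restriction and $j_{\Q +}$ to localization: concretely $j_{\Q+} j_\Q^! \cE \simeq \sD_{P_\Q}(*Z_\Q) \otimes_{\sD_{P_\Q}} \cE$, with $can$ given by $x \mapsto 1\otimes x$. Similarly, as already recalled before Proposition~\ref{bidual}, one has $J_+ J^! \cF \simeq \sD^{\dagger}_{\cP}(^{\dagger}Z_s) \otimes_{\sD^{\dagger}_{\cP}} \cF$, with $CAN$ given by $y \mapsto 1 \otimes y$. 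Thus I first rewrite the whole diagram in terms of these localization morphisms; there are no derived functors to worry about because $j_{\Q+}$ is exact ($Z_\Q$ is a divisor) and $\sD^{\dagger}_{\cP}$ is flat over $\osD_{P_\Q}$ by Lemma~\ref{platitude}.

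Next I will construct a canonical isomorphism
$$ \sD^{\dagger}_{\cP} \otimes_{\osD_{P_\Q}} \overline{j_{\Q+} j_\Q^! \cE} \;\simeq\; J_+ J^! \big(\sD^{\dagger}_{\cP} \otimes_{\osD_{P_\Q}} \cE\big) $$
through a bracketing
$$ \sD^{\dagger}_{\cP} \otimes_{\osD_{P_\Q}} \osD_{P_\Q}(*Z_\Q) \otimes_{\osD_{P_\Q}} \cE \;\simeq\; \sD^{\dagger}_{\cP}(^{\dagger}Z_s) \otimes_{\osD_{P_\Q}(*Z_\Q)} \overline{j_{\Q+} \cE} \;\simeq\; \sD^{\dagger}_{\cP}(^{\dagger}Z_s) \otimes_{\sD^{\dagger}_{\cP}} \big(\sD^{\dagger}_{\cP} \otimes_{\osD_{P_\Q}} \cE\big). $$
These are the same associativity identifications already used in the proof of Proposition~\ref{bidual}, built from the inclusions $\osD_{P_\Q} \hookrightarrow \osD_{P_\Q}(*Z_\Q) \hookrightarrow \sD^{\dagger}_{\cP}(^{\dagger}Z_s)$ and $\sD^{\dagger}_{\cP} \hookrightarrow \sD^{\dagger}_{\cP}(^{\dagger}Z_s)$, all of which are flat.

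With this isomorphism in hand, I will trace a local section $x$ of $\cE$ through both routes of the square. Going right then down yields the image of $1\otimes x \in \overline{j_{\Q+} j_\Q^! \cE}$ under scalar extension, which is $1 \otimes (1 \otimes x)$. Going down then right yields the class of $1 \otimes x \in \sD^{\dagger}_{\cP}(^{\dagger}Z_s) \otimes_{\sD^{\dagger}_{\cP}}(\sD^{\dagger}_{\cP} \otimes_{\osD_{P_\Q}} \cE)$, which corresponds to the same element after the identification above. The commutativity therefore amounts to the tautological equality of the two ways of factoring the unit of the scalar extension $\osD_{P_\Q} \hookrightarrow \sD^{\dagger}_{\cP}(^{\dagger}Z_s)$ through the intermediate ring $\sD^{\dagger}_{\cP}$ or $\osD_{P_\Q}(*Z_\Q)$.

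The main obstacle I expect is purely bookkeeping: keeping track of the four relevant sheaves of rings, verifying that the stated associativity isomorphisms are genuinely canonical and natural in $\cE$, and confirming that no derived issues arise. Once Lemma~\ref{platitude} and the exactness of $j_{\Q+}$ are invoked to guarantee flatness in both directions, the argument becomes formal.
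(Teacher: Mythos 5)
Your proof is correct and takes essentially the same route as the paper's: both rewrite $j_{\Q+}j_\Q^!$ and $J_+J^!$ as localization (tensoring with $\osD_{P_\Q}(*Z_\Q)$, resp. $\sD^{\dagger}_{\cP}(^\dagger Z_s)$) and reduce the square to the tautological commutativity of the two factorizations of the unit of the scalar extension $\osD_{P_\Q}\to\sD^{\dagger}_{\cP}(^\dagger Z_s)$. You simply spell out the identifications and the element chase that the paper compresses into the phrase ``writing out explicitly all functors.''
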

\begin{proof} Writing out explicitly all functors, we see that the above diagram comes down to the commutative diagram
$$\xymatrix{ \cE   \ar@{->}[r]^>>>>>>>>{can} \ar@{->}[d] &  \osD_{P_{\bbQ}}(*Z_{\bbQ})\ot_{\osD_{P_{\bbQ}}}\cE \ar@{->}[d]\\
                   \sD^{\dagger}_{\cP}\ot_{\osD_{P_{\bbQ}}}\cE\ar@{->}[r]^>>>>>{CAN} &
                   \sD^{\dagger}_{\cP}(^{\dagger}Z_s)\ot_{\osD_{P_{\bbQ}}}\cE.
}$$
\end{proof}

We recall the following result of Berthelot \cite[4.3.2]{Berthelot_Trento}. Recall that a relative normal crossing divisor is transversal.
\begin{prop}\label{rap_trento} Let $Z\subset P$ be a relative normal crossing divisor. Then one has
$$\cO_{\cP,\Q}({^\dagger Z_s})\simeq\sD^{\dagger}_{\cP}\ot_{\osD_{P_{{\bbQ}}}}
\overline{j_{\Q *}\cO_{Y_{\bbQ}}}.$$
\end{prop}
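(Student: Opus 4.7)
The plan is to construct a canonical $\sD^{\dagger}_{\cP}$-linear morphism
$$\Phi\colon \sD^{\dagger}_{\cP}\ot_{\osD_{P_{{\bbQ}}}}\overline{j_{\Q *}\cO_{Y_{\bbQ}}} \longrightarrow \cO_{\cP,\Q}({^\dagger Z_s})$$
and then verify it is an isomorphism. Since $Z$ is a transversal divisor (as any relative normal crossing divisor is), the $\sD_{P_{\bbQ}}$-module $j_{\Q *}\cO_{Y_{\bbQ}}$ is canonically isomorphic to $\cO_{P_{\bbQ}}(*Z_{\bbQ})$, the sheaf of regular functions with algebraic poles along $Z_{\bbQ}$. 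The corresponding overlined sheaf on $\cP$ embeds naturally into $\cO_{\cP,\Q}({^\dagger Z_s})$: an algebraic function with poles along $Z_{\bbQ}$ is, a fortiori, an overconvergent function with overconvergent singularities along $Z_s$. This inclusion is $\osD_{P_{\bbQ}}$-linear, and since the target is a $\sD^{\dagger}_{\cP}$-module it extends uniquely along the scalar extension $\osD_{P_{\bbQ}}\hra\sD^{\dagger}_{\cP}$ of Lemma \ref{platitude} to the $\sD^{\dagger}_{\cP}$-linear morphism $\Phi$.

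The verification that $\Phi$ is an isomorphism is a local question on $\cP$. I would restrict to an affine open $\cU\subset\cP$ over which $P$ admits local coordinates $t_1,\dots,t_M$ with $Z\cap U=V(t_1\cdots t_r)$. By the flatness of $\sD^{\dagger}_{\cP}$ over $\osD_{P_{\bbQ}}$ (Lemma \ref{platitude}), the source of $\Phi$ can be computed as the $\sD^{\dagger}_{\cP}$-submodule of $\cO_{\cP,\Q}({^\dagger Z_s})$ generated by the elementary algebraic fractions $t^{-\ua}=t_1^{-a_1}\cdots t_r^{-a_r}$, $\ua\in\bbN^r$. The task is then to show that this submodule exhausts $\cO_{\cP,\Q}({^\dagger Z_s})$.

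The main obstacle is the explicit local identification. Elements of $\cO_{\cP,\Q}({^\dagger Z_s})$ admit expansions as overconvergent series $\sum_{\un\in\bbN^r} a_{\un}t^{-\un}$ with coefficients $a_{\un}\in\Gamma(\cU,\cO_{\cP,\Q})$ satisfying $\|a_{\un}\|\leq c\eta^{|\un|}$ for some $c>0$, $\eta<1$. On the other hand, applying the divided power operators $\partial^{[\uk]}_{\ut}$ to the algebraic generators $t^{-\ua}$ produces, up to explicit binomial factors, the monomials $t^{-\un}$ in arbitrary multiplicities, and the overconvergence conditions built into the ring $\sD^{\dagger}_{\cP}$ translate precisely into the decay estimates defining $\cO_{\cP,\Q}({^\dagger Z_s})$. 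Making this matching of the two types of overconvergence precise, including the necessary manipulation of divided power coefficients, is the substantive content of the proposition. This is carried out in \cite[Prop.~4.3.2]{Berthelot_Trento}, to which the verification ultimately reduces.
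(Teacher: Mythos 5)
Your proof takes essentially the same route as the paper: the paper records this proposition purely as a recalled result of Berthelot, \cite[Prop.\ 4.3.2]{Berthelot_Trento}, and gives no argument of its own, and your sketch likewise defers the substantive verification to that reference. One small caveat in your scaffolding: flatness of $\sD^{\dagger}_{\cP}$ over $\osD_{P_{\bbQ}}$ (Lemma \ref{platitude}) gives injectivity of $\sD^{\dagger}_{\cP}\ot_{\osD_{P_{\bbQ}}}\overline{j_{\Q *}\cO_{Y_{\bbQ}}}\hookrightarrow\sD^{\dagger}_{\cP}\ot_{\osD_{P_{\bbQ}}}\cO_{\cP,\Q}({^\dagger Z_s})$, but not directly injectivity of $\Phi$, which is the composite with the multiplication map $\sD^{\dagger}_{\cP}\ot_{\osD_{P_{\bbQ}}}\cO_{\cP,\Q}({^\dagger Z_s})\to\cO_{\cP,\Q}({^\dagger Z_s})$; this injectivity is itself part of what Berthelot's explicit local estimates establish, so it should not be attributed to flatness alone.
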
 
Note that the sheaf $\overline{j_*\cO_{Y_{\bbQ}}}$ is equal to $\overline{\cO_{P_{\bbQ}}(*Z_{\Q})}$ and the 
isomorphism is given by the canonical inclusion of sheaves of rings 
$\overline{\cO_{P_{\bbQ}}(*Z_{\Q})} \hrig \cO_{\cP,\Q}(^{\dagger}Z_s)$, 
sending $1$ to $1$. This allows us to identify $\cO_{\cP,\Q}({^\dagger Z_s})$ with 
$\sD^{\dagger}_{\cP}\ot_{\osD_{P_{{\bbQ}}}}\overline{j_*\cO_{Y_{\bbQ}}}=\cO_{\bbY}$.
In the same situation as in~\ref{bidual} we have
\begin{prop}\label{Jexcl} Let $Z\subset P$ be a relative normal crossing divisor.

\begin{enumerate} \item $\bbD_{Y_{\Q}}\cO_{Y_{\Q}}=\cO_{Y_{\Q}}, \; 
                           \bbD_{\bbY}\cO_{\bbY}=\cO_{\bbY},$
                     \item there is a canonical isomorphism $J_!\cO_{\bbY}\simeq 
                    \sD^{\dagger}_{\cP}\ot_{\osD_{P_{\bbQ}}}j_{\Q!}\cO_{Y_{\Q}}$.
             \end{enumerate}
\end{prop}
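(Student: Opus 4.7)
The strategy is to combine Virrion's base-change compatibility for duality (Proposition~\ref{compat_dual}) with Berthelot's presentation of $\cO_{\bbY}$ from Proposition~\ref{rap_trento}. The key subtlety to keep track of is that in (i) the relevant duality is $\bbD_{\bbY}$, taken over the ring $\sD^{\dagger}_{\cP}(^{\dagger}Z_s)$, whereas in (ii) the relevant duality is $\bbD_{\bbP}$, taken over the larger ring $\sD^{\dagger}_{\cP}$; applied to the same underlying $\sD^{\dagger}_{\cP}$-module $\cO_{\bbY}$, these two dualities give different answers, and this is precisely what distinguishes $\cO_{\bbY}$ from $J_{!}\cO_{\bbY}$.

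For (i), the identity $\bbD_{Y_{\Q}}\cO_{Y_{\Q}}=\cO_{Y_{\Q}}$ is the classical self-duality of the structure sheaf on a smooth variety, obtained via the Spencer/Koszul resolution. For the arithmetic analogue, I will first rewrite, using Proposition~\ref{rap_trento},
$$\cO_{\bbY}\simeq \sD^{\dagger}_{\cP}(^{\dagger}Z_s)\otimes_{\osD_{P_{\Q}}(*Z_{\Q})} \overline{j_{\Q+}\cO_{Y_{\Q}}},$$
and then invoke Virrion's base-change formula for the pair of rings $\bigl(\osD_{P_{\Q}}(*Z_{\Q}),\sD^{\dagger}_{\cP}(^{\dagger}Z_s)\bigr)$. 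The problem is thereby reduced to computing $\bbD_{\osD_{P_{\Q}}(*Z_{\Q})}(\overline{j_{\Q+}\cO_{Y_{\Q}}})$. Since $j_{\Q+}$ induces an equivalence between coherent $\sD_{Y_{\Q}}$-modules and coherent $\osD_{P_{\Q}}(*Z_{\Q})$-modules that intertwines the two dualities (the very point exploited in the proof of Proposition~\ref{bidual}), this equals $\overline{j_{\Q+}\bbD_{Y_{\Q}}\cO_{Y_{\Q}}}=\overline{j_{\Q+}\cO_{Y_{\Q}}}$ by the first part of (i). Assembling these pieces yields $\bbD_{\bbY}\cO_{\bbY}\simeq\cO_{\bbY}$.

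For (ii), I use the definitions $J_{!}=\bbD_{\bbP}\circ J_{+}\circ\bbD_{\bbY}$ and $j_{\Q!}=\bbD_{P_{\Q}}\circ j_{\Q+}\circ\bbD_{Y_{\Q}}$. By (i), combined with the fact that $J_{+}$ is simply the forgetful functor $\text{Ovhol}(\bbY/K)\to\text{Ovhol}(\bbP/K)$, both expressions collapse to
$$J_{!}\cO_{\bbY}\simeq \bbD_{\bbP}(\cO_{\bbY}),\qquad j_{\Q!}\cO_{Y_{\Q}}\simeq \bbD_{P_{\Q}}(j_{\Q+}\cO_{Y_{\Q}}).$$
Rewriting $\cO_{\bbY}\simeq \sD^{\dagger}_{\cP}\otimes_{\osD_{P_{\Q}}}\overline{j_{\Q+}\cO_{Y_{\Q}}}$ and applying Virrion's compatibility now to the pair $(\osD_{P_{\Q}},\sD^{\dagger}_{\cP})$ gives
$$\bbD_{\bbP}(\cO_{\bbY})\simeq \sD^{\dagger}_{\cP}\otimes_{\osD_{P_{\Q}}} \bbD_{\osD_{P_{\Q}}}(\overline{j_{\Q+}\cO_{Y_{\Q}}}).$$
Since the overline operation $\alpha^{-1}j_{*}$ is exact and commutes with the $R\cH om$ defining duality on holonomic generic-fibre modules, the inner dual identifies with $\overline{\bbD_{P_{\Q}}(j_{\Q+}\cO_{Y_{\Q}})}=\overline{j_{\Q!}\cO_{Y_{\Q}}}$, which gives the desired canonical isomorphism.

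The main obstacle I anticipate is the bookkeeping of the two different dualities and ensuring that Virrion's compatibility interacts cleanly with the overline operation; in particular, checking that $\bbD_{\osD_{P_{\Q}}}$ really does commute with the overline on holonomic generic-fibre modules (so that one may legitimately move from $\overline{j_{\Q+}\cO_{Y_{\Q}}}$ to $\overline{j_{\Q!}\cO_{Y_{\Q}}}$) is the point that demands the most care, although it is essentially built into the formalism used already in Propositions~\ref{bidual} and \ref{bidual2}.
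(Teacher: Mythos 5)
Your argument for part (ii) is, up to rearrangement, the same as the paper's: write $J_! = \bbD_{\bbP}\circ J_+\circ\bbD_{\bbY}$, use (i) to kill the inner dual, express $J_+\cO_{\bbY}=\cO_{\cP,\Q}(^{\dagger}Z_s)$ via Proposition~\ref{rap_trento}, apply Virrion (Proposition~\ref{compat_dual}) to pull $\bbD_{\bbP}$ inside, and finally observe that $\bbD_{\osD_{P_{\Q}}}$ commutes with the overline so that $\bbD_{P_{\Q}}(j_{\Q+}\cO_{Y_{\Q}})=j_{\Q!}\cO_{Y_{\Q}}$ appears.

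Where you genuinely diverge from the paper is in the arithmetic half of part (i). The paper proves $\bbD_{\bbY}\cO_{\bbY}=\cO_{\bbY}$ by invoking an explicit auto-dual Spencer resolution of $\cO_{\cP,\Q}(^{\dagger}Z_s)$ over $\sD^{\dagger}_{\cP}(^{\dagger}Z_s)$, citing \cite[Lemme 4.2.1]{HuFour}. You instead derive it from the same Virrion base-change machinery used in (ii), applied to the pair $\bigl(\osD_{P_{\Q}}(*Z_{\Q}),\sD^{\dagger}_{\cP}(^{\dagger}Z_s)\bigr)$, reducing to the classical generic-fibre self-duality $\bbD_{Y_{\Q}}\cO_{Y_{\Q}}=\cO_{Y_{\Q}}$. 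This route is valid (the paper itself uses exactly this refinement of \ref{rap_trento} and this Virrion base-change in the proof of Proposition~\ref{bidual}) and has the appeal of unifying (i) and (ii) and avoiding the reference to HuFour; the cost is that you must carry along the two auxiliary facts you flag at the end (that $j_{\Q+}$ intertwines $\bbD_{Y_{\Q}}$ with $\bbD_{\sD_{P_{\Q}}(*Z_{\Q})}$, and that the overline commutes with duality). One small imprecision: the equivalence induced by $j_{\Q+}$ is with coherent $\sD_{P_{\Q}}(*Z_{\Q})$-modules, not directly with $\osD_{P_{\Q}}(*Z_{\Q})$-modules; it is the composite $\cE\mapsto\overline{j_{\Q+}\cE}$ that lands in the latter, and one should separate the intertwining step (on $P_{\Q}$) from the overline-commutes-with-duality step. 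With that caveat the argument goes through.
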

\begin{proof} The fact that $\bbD_{Y_{\Q}}\cO_{Y_{\Q}}=\cO_{Y_{\Q}}$ is classical and comes from the fact 
that on the smooth scheme $Y_{\Q}$, the $\sD_{Y_{\Q}}$-module $\cO_{Y_{\Q}}$ admits a resolution by the Spencer complex, and that this latter 
complex is auto-dual. To see the second statement, we use the fact proved in~\cite[Lemme 4.2.1]{HuFour} 
that $\cO_{\bbY}=\cO_{\cP,\Q}(^{\dagger}Z_s)$ also admits a 
resolution by a Spencer complex (with $d=\dim\,Y_{\Q}$)
$$0 \ra \sD^{\dagger}_{\cP}(^{\dagger}Z_s)\ot_{\cO_{\cP}}\Lam^d \cT_{\cP} \ra \ldots \ra 
 \sD^{\dagger}_{\cP}(^{\dagger}Z_s)\ot_{\cO_{\cP}}\Lam^1 \cT_{\cP}\ra  \sD^{\dagger}_{\cP}(^{\dagger}Z_s)$$
that is auto-dual for the functor $\bbD_{\bbY}=
R\cH om_{\sD^{\dagger}_{\cP}(^{\dagger}Z_s)}(\, \cdot \,,\sD^{\dagger}_{\cP}(^{\dagger}Z_s) [d])\ot_{\cO_{\cP}}\omega_{\cP}$.
 This proves (i). Then we compute 
\begin{align*} J_!\cO_{\bbY} & = \bbD_{\bbP} J_+ \bbD_{\bbY}\cO_{\bbY} \\
                              & = \bbD_{\bbP} J_+ \cO_{\bbY} \\
                             & = \bbD_{\bbP}(\sD^{\dagger}_{\cP}\ot_{\osD_{P_{\bbQ}}}\overline{j_+\cO_{Y_{\Q}}}) 
                           \textrm{ by ~\ref{rap_trento}},\\
                             & \simeq \sD^{\dagger}_{\cP}\ot_{\osD_{P_{\bbQ}}}\overline{\bbD_{P_{\Q}} (j_+\cO_{Y_{\Q}})}
 \textrm{ by ~\ref{compat_dual},}
\end{align*}
which proves (ii).
\end{proof}

\vskip5pt

\subsection{Proper morphisms} \label{debcompat2}

Before giving compatibility results for direct images relative to proper morphisms we establish the following auxiliary lemmas.
\begin{lemma}\label{mapbar} Let $f$ : $ P \ra Q$ be a morphism of smooth $\fro$-schemes and $\cF\in D^+_{qcoh}(\cO_{P_{\bbQ}})$ with  $\ocF\in D^+(\ocO_{P_{\bbQ}})$. There is a natural map $\overline{Rf_{\bbQ *}(\cF)}\ra R\hf_*(\ocF)$. 
\end{lemma}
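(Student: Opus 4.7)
The plan is to produce the natural map as the composite of two standard natural transformations: the commutation of pushforward with the open immersions $j$ and $j'$, and the base change morphism attached to the commutative square relating $f$ with $\hf$.

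First, I would unfold definitions to see that $\overline{Rf_{\bbQ *}\cF}=\alpha'^{-1}j'_{*}Rf_{\bbQ *}\cF$, where $\alpha': \cQ\to Q$ and $j': Q_{\bbQ}\hookrightarrow Q$ are the analogues for $Q$ of $\alpha$ and $j$. The square
$$\xymatrix{ P_{\bbQ}\ar[r]^{j}\ar[d]_{f_{\bbQ}} & P \ar[d]^{f}\\ Q_{\bbQ}\ar[r]^{j'} & Q }$$
is cartesian. Since $j$ and $j'$ are the localization morphisms inverting $\vpi$, they are affine, so $j_{*}=Rj_{*}$ and $j'_{*}=Rj'_{*}$ on quasi-coherent sheaves. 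Functoriality of derived pushforward then supplies a canonical isomorphism $j'_{*}Rf_{\bbQ *}\cF\simeq Rf_{*}j_{*}\cF$.

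Second, the square
$$\xymatrix{ \cP\ar[r]^{\alpha}\ar[d]_{\hf} & P \ar[d]^{f}\\ \cQ\ar[r]^{\alpha'} & Q }$$
commutes, since by construction $\hf$ is the $p$-adic completion of $f$ and $\alpha$ is the canonical morphism from the formal completion to the scheme. The pair of adjunctions $(\alpha^{-1},R\alpha_{*})$ and $(\hf^{-1},R\hf_{*})$ then yields the usual base change natural transformation $\alpha'^{-1}\circ Rf_{*}\Longrightarrow R\hf_{*}\circ \alpha^{-1}$.

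Finally, I would define the desired map as the composition
$$\overline{Rf_{\bbQ *}\cF}=\alpha'^{-1}j'_{*}Rf_{\bbQ *}\cF\;\simeq\;\alpha'^{-1}Rf_{*}(j_{*}\cF)\;\longrightarrow\;R\hf_{*}\alpha^{-1}(j_{*}\cF)=R\hf_{*}\ocF.$$
Naturality in $\cF$ is automatic since the map is a composition of natural transformations. There is no real obstacle here: the only thing worth verifying is the affineness of $j$, so that $j_{*}=Rj_{*}$ on $D^{+}_{qcoh}(\cO_{P_{\bbQ}})$, which reduces to the elementary fact that inverting $\vpi$ is affine on the noetherian scheme $P$. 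The hypothesis that $\ocF$ lies in $D^{+}(\ocO_{P_{\bbQ}})$ ensures that the target $R\hf_{*}\ocF$ is a well-defined object.
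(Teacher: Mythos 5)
Your proof is correct and produces the same natural map the paper does: the essential ingredient in both is the unit of the $\alpha$-adjunction combined with the commutativities $\alpha'\circ\hf=f\circ\alpha$ and $j'\circ f_{\bbQ}=f\circ j$. The only real difference is presentational: you assemble the map entirely within the derived-category formalism, by observing that the affineness of $j,j'$ lets you move $j'_*$ past $Rf_{\bbQ*}$ via the composition formula, and then invoking the standard base change transformation $\alpha'^{-1}\circ Rf_*\Rightarrow R\hf_*\circ\alpha^{-1}$ attached to the left-hand square; the paper instead builds the map first for a single quasi-coherent sheaf $\cE$ (via $j_*\cE\to\alpha_*\alpha^{-1}j_*\cE$, push forward by $f$, adjoint for $\alpha'$) and then promotes it to $D^+$ by choosing an injective resolution $\cI_{\bullet,\bullet}$ of $\cF$ by quasi-coherent modules and using the exactness of $\cE\mapsto\ocE$. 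Both routes are sound and give the same arrow, and your version has the advantage of being visibly natural in $\cF$ without having to track compatibilities at the resolution level; the paper's version is more elementary in that it avoids appealing to the general machinery of derived composition formulas and base change morphisms.
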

\begin{proof} We have the following diagram 
$$\xymatrix{ \cP\ar@{->}[r]^{\alpha} \ar@{->}[d]^{\hf} & P \ar@{->}[d]^{f}& P_{\bbQ}\ar@{->}[l]^{j}
\ar@{->}[d]^{f_{\bbQ}}  \\  
\cQ\ar@{->}[r]^{\alpha} & Q & Q_{\bbQ}\ar@{->}[l]^{j}
, }$$
in which both squares are commutative diagrams (the left one is commutative as it is commutative when $\cP$ and 
$\cQ$ are replaced by $P_i$ and $Q_i$). Let $\cE$ be a quasi-coherent sheaf on $P_{\Q}$, we have a canonical map $j_*\cE\ra
\alpha_*\alpha^{-1}j_*\cE$. If we compose this map with $f_*$, we get by adjunction by $\alpha$ 
a map $\overline{f_{\bbQ *}\cE}\ra
\hf_*\overline{\cE}$.
Let $\cF\stackrel{\simeq}{\ra}\cI_{\bul,\bul}$ be an injective resolution of $\cF$ by a double complex of quasi-coherent 
$\cO_{P_{\bbQ}}$-modules. As the functor
$\cE \mapsto \ocE$ is exact on quasi-coherent $\cO_{P_{\bbQ}}$-modules, we have a quasi-isomorphism
$ R\hf_*(\overline{\cF})\simeq R\hf_*(\overline{\cI}_{\bul,\bul}).$  
 We finally obtain the map of the lemma by the following
composition
$$ \overline{Rf_{\bbQ *}(\cF)} \simeq  \overline{f_{\bbQ *}(\cI_{\bul,\bul})} \ra \hf_*(\overline{\cI}_{\bul,\bul})\ra 
R\hf_*(\overline{\cI}_{\bul,\bul})\simeq R\hf_*(\overline{\cF}) .$$
\end{proof}

\begin{lemma}\label{map_im_dir} Let $f$ : $P \ra Q$ be a morphism of smooth $\fro$-schemes and $\cE\in D^b_{coh}(\sD_{P_{\bbQ}})$. Then 
there is a canonical morphism in $D^+(\osD_{Q_{\bbQ}})$
$$\sD^{\dagger}_{\cQ}\ot_{\osD_{Q_{\bbQ}}}\overline{f_{\bbQ +}(\cE)}\ra 
          \hf_+\left(\sD^{\dagger}_{\cP}\ot_{\osD_{P_{\bbQ}}}\ocE\right).$$
\end{lemma}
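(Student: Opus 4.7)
The plan is to unfold both $f_{\bbQ+}$ and $\hf_+$ via transfer bimodules, apply Lemma~\ref{mapbar} at the underlying $\cO$-module level, and then combine with canonical bimodule maps and an adjunction. Writing $\sD_{Q_{\bbQ}\la P_{\bbQ}}$ and $\sD^{\dagger}_{\cQ\la\cP}$ for the transfer bimodules associated to $f_{\bbQ}$ and $\hf$, one has
\begin{gather*}
f_{\bbQ+}\cE=Rf_{\bbQ*}\bigl(\sD_{Q_{\bbQ}\la P_{\bbQ}}\ot^L_{\sD_{P_{\bbQ}}}\cE\bigr),\\
\hf_+\bigl(\sD^{\dagger}_{\cP}\ot_{\osD_{P_{\bbQ}}}\ocE\bigr)=R\hf_*\bigl(\sD^{\dagger}_{\cQ\la\cP}\ot^L_{\osD_{P_{\bbQ}}}\ocE\bigr),
\end{gather*}
where in the second formula we use the tautological identification $\sD^{\dagger}_{\cQ\la\cP}\ot^L_{\sD^{\dagger}_{\cP}}\sD^{\dagger}_{\cP}\simeq \sD^{\dagger}_{\cQ\la\cP}$.

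\textbf{Construction of the morphism.} First, apply Lemma~\ref{mapbar} to the complex $\cF=\sD_{Q_{\bbQ}\la P_{\bbQ}}\ot^L_{\sD_{P_{\bbQ}}}\cE\in D^+_{qcoh}(\cO_{P_{\bbQ}})$ to obtain a natural morphism $\overline{Rf_{\bbQ*}\cF}\ra R\hf_*\ocF$. Second, the exactness of $j_*$ on quasi-coherent sheaves (since $j$ is an open affine immersion) and of $\alpha^{-1}$ produce a natural comparison $\ocF\ra \osD_{Q_{\bbQ}\la P_{\bbQ}}\ot^L_{\osD_{P_{\bbQ}}}\ocE$ respecting the relevant bimodule structures. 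Third, the inclusion of sheaves of rings $\osD_{P_{\bbQ}}\hra \sD^{\dagger}_{\cP}$ from Lemma~\ref{platitude} and its analogue on $\cQ$ induce a canonical bimodule morphism $\osD_{Q_{\bbQ}\la P_{\bbQ}}\ra \sD^{\dagger}_{\cQ\la\cP}$, yielding $\osD_{Q_{\bbQ}\la P_{\bbQ}}\ot^L_{\osD_{P_{\bbQ}}}\ocE\ra \sD^{\dagger}_{\cQ\la\cP}\ot^L_{\osD_{P_{\bbQ}}}\ocE$. Concatenating these three morphisms and applying $R\hf_*$ gives $\overline{f_{\bbQ+}\cE}\ra R\hf_*\bigl(\sD^{\dagger}_{\cQ\la\cP}\ot^L_{\osD_{P_{\bbQ}}}\ocE\bigr)$ in $D^+(\osD_{Q_{\bbQ}})$. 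To finish, tensor on the left with $\sD^{\dagger}_{\cQ}$ over $\osD_{Q_{\bbQ}}$ and appeal to the canonical projection-type morphism $\sD^{\dagger}_{\cQ}\ot_{\osD_{Q_{\bbQ}}}R\hf_*(-)\ra R\hf_*\bigl(\hf^{-1}\sD^{\dagger}_{\cQ}\ot^L_{\hf^{-1}\osD_{Q_{\bbQ}}}(-)\bigr)$ arising from the unit of the $(\hf^{-1},R\hf_*)$-adjunction. Combined with the left $\hf^{-1}\sD^{\dagger}_{\cQ}$-action on $\sD^{\dagger}_{\cQ\la\cP}$ extending the $\hf^{-1}\osD_{Q_{\bbQ}}$-action, this produces the desired map.

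\textbf{Main obstacle.} The delicate step is the second one: the comparison $\ocF\ra \osD_{Q_{\bbQ}\la P_{\bbQ}}\ot^L_{\osD_{P_{\bbQ}}}\ocE$. Pushforward along an open immersion does not automatically commute with tensor products over sheaves of non-commutative rings, so the bimodule structures must be tracked carefully. The key inputs are the exactness of $\alpha^{-1}$ and of $j_*$ on quasi-coherent sheaves, together with the flatness of $\sD^{\dagger}_{\cP}$ over $\osD_{P_{\bbQ}}$ from Lemma~\ref{platitude}. The remaining steps are standard derived-category manipulations of adjoint functors and derived tensor products, and the naturality of each ingredient shows that the resulting morphism is canonical in $\cE$.
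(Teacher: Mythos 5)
Your proposal matches the paper's own proof in all essentials: both unfold $f_{\bbQ+}$ and $\hf_+$ via the transfer bimodules, apply Lemma~\ref{mapbar} to the quasi-coherent $\cO_{P_{\bbQ}}$-complex $\sD_{Q_{\bbQ}\la P_{\bbQ}}\ot^{\bL}_{\sD_{P_{\bbQ}}}\cE$, and compose with the map of transfer sheaves $\osD_{Q_{\bbQ}\la P_{\bbQ}}\ra\sD^{\dagger}_{\cQ\la\cP}$ induced by $\alpha^{-1}\sD_Q\ra\hsD^{(m)}_{\cQ}$. The only cosmetic difference lies in the final packaging: the paper simply observes that, since the target is already a $\sD^{\dagger}_{\cQ}$-complex, it suffices to produce the $\osD_{Q_{\bbQ}}$-linear map $\overline{f_{\bbQ+}\cE}\ra\hf_+\bigl(\sD^{\dagger}_{\cP}\ot_{\osD_{P_{\bbQ}}}\ocE\bigr)$ and then extend scalars, whereas you invoke a projection-type morphism to the same effect.
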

\begin{proof} It is enough to prove that there is a map
          $$  \overline{f_{\bbQ +}(\cE)}\ra 
          \hf_+\left(\sD^{\dagger}_{\cP}\ot_{\osD_{P_{\bbQ}}}\ocE\right).$$
Let us introduce the transfer sheaves 
$\sD_{Q_{\bbQ}\la P_{\bbQ}}=\omega_{P/Q}\otimes_{\cO_{P_{\Q}}}f_{\bbQ}^*\sD_{Q_{\bbQ}}$, and 
$$\hsD^{(m)}_{\cP \ra \cQ}=\varprojlim_i f_i^* \sD^{(m)}_{Q_i}, \hskip10pt
\hsD^{(m)}_{\cQ \la \cP}=\omega_{\cP/\cQ}\ot_{\cO_{\cP}}\hsD^{(m)}_{\cP \ra \cQ} ,\hskip10pt
\sD^{\dagger}_{\cQ \la \cP}=\varinjlim_m \hsD^{(m)}_{\cQ \la \cP,\Q}.$$
Recall that 
$$ f_{\bbQ +}(\cE)= Rf_{\bbQ *}\left(\sD_{Q_{\bbQ}\la P_{\bbQ}}\ot_{\sD_{P_{\bbQ}}}^{\bL}\cE\right), \hskip10pt
\hf_+\left(\sD^{\dagger}_{\cP}\ot_{\osD_{P_{\bbQ}}}\ocE\right)=R\hf_*\left(\sD^{\dagger}_{\cQ \la \cP}
\ot_{\osD_{P_{\bbQ}}}^{\bL}\ocE\right).$$
Note that we have
$$ \overline{f_{\Q}^{-1}\sD}_{Q_{\bbQ}}\simeq \alpha^{-1}f^{-1}(\sD_{Q_{}}\ot \Q) = \hf^{-1}\alpha^{-1}(\sD_{Q_{}}\ot \Q),$$
and for all $m$ we have maps
$\alpha^{-1}(\sD_{Q_{}})\ra  \hsD_{Q_{}}^{(m)} $. This gives maps 
$\hf^*(\alpha^{-1}(\sD_{Q_{}}))\ra \hsD^{(m)}_{\cP \ra \cQ}$, that give rise to maps of transfer sheaves 
$\osD_{Q_{\Q}\la P_{\Q}} \ra \sD^{\dagger}_{\cQ \la \cP}$.

Take $\cE\in D^b_{coh}(\sD_{P_{\bbQ}})$, then, observing that $\sD_{Q_{\bbQ}\la P_{\bbQ}}$ is a quasi-coherent
$\cO_{P_{\Q}}$-module, we see that
$$   \sD_{Q_{\bbQ}\la P_{\bbQ}}\ot_{\sD_{P_{\bbQ}}}^{\bL}\cE \in  D^b_{qcoh}(\cO_{P_{\Q}}), $$ so that we can apply
~\ref{mapbar} to this complex of sheaves and we finally get the map of the statement by the following composition
$$ \overline{Rf_{\bbQ *}\left(\sD_{Q_{\bbQ}\la P_{\bbQ}}\ot_{\sD_{P_{\bbQ}}}^{\bL}\cE\right)} \ra 
  R\hf_*\left(\overline{\sD_{Q_{\bbQ}\la P_{\bbQ}}\ot_{\sD_{P_{\bbQ}}}^{\bL}\cE}\right) \ra 
   R\hf_*\left( \sD^{\dagger}_{\cQ \la \cP} \ot_{\osD_{P_{\bbQ}}}^{\bL}\ocE\right).$$ 
\end{proof}

\vskip5pt

Assume from now on and for the rest for this subsection that $f: P \ra Q$ is a proper map of smooth $\fro$-schemes. Both
sheaves $\sD_{P_{\bbQ}}$ and $\sD^{\dagger}_{\cP}$ have finite cohomological dimension ~\cite[4.4.8]{Berthelot_Dmod2}, as well as $Rf_*$
since $f$ is proper. Let $*\in \{-,b\}$ and $\cE\in D^*_{coh}(\sD_{P_{\bbQ}})$, then
 $ f_{\bbQ +}(\cE)$ (resp.
$\hf_+(\sD^{\dagger}_{\cP}\ot_{\osD_{P_{\bbQ}}}\ocE)$ are objects of  $D^*_{coh}(\osD_{Q_{\bbQ}}) $, 
(resp. $D^*_{coh}(\sD^{\dagger}_{\cQ})$), and thanks to the lemma, there is a map 
$$ \sD^{\dagger}_{\cQ}\ot_{\osD_{Q_{\bbQ}}}\overline{f_{\bbQ +}(\cE)}\ra 
         \hf_+(\sD^{\dagger}_{\cP}\ot_{\osD_{P_{\bbQ}}}\ocE).$$
Our goal (see ~\ref{compat_proj}) is to prove that this map is an isomorphism when $P$ and $Q$ are projective $\fro$-schemes. As
usual, we will factorize $f$ as a closed immersion and a projection. We first deal with the case of a closed immersion.

\vskip5pt

Let $i$ : $P \hra Q$ be a closed immersion of smooth  $\fro$-schemes, defined by a sheaf of ideals $\cI \subset \cO_{Q}$. 
  We then have the following compatibility result for closed immersions.
\begin{prop} \label{compat-im}Let $*\in\{-,b\}$ and let $\cE \in D^*_{coh}(\sD_{P_{\bbQ}})$. Then there is a canonical isomorphism
        in $D^*_{coh}(\sD^{\dagger}_{Q})$
       $$\sD^{\dagger}_{\cQ}\ot_{\osD_{Q_{\bbQ}}}\overline{i_{\bbQ +}(\cE)}\simeq 
\hi_+\left(\sD^{\dagger}_{\cP}\ot_{\osD_{P_{\bbQ}}}\ocE \right).$$
\end{prop}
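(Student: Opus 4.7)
The plan is to reduce the statement to a local verification involving transfer bimodules. First, since $i$ is a closed immersion, both $i_{\bbQ *}$ and $\hi_*$ are exact, and in view of the finite cohomological dimensions of $\sD_{P_{\bbQ}}$ and $\sD^{\dagger}_{\cP}$ (as invoked in the discussion preceding the statement), a standard way-out functor argument reduces the proof to the case where $\cE$ is a single coherent $\sD_{P_{\bbQ}}$-module placed in degree zero. The assertion is moreover local on $Q$, so we may restrict to an affine open $Q = \Spec A$ endowed with étale coordinates $x_1,\dots,x_n$ adapted to the immersion, i.e.\ such that $P$ is cut out by the ideal $(x_1,\dots,x_r)$.

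The key technical step is to establish a canonical bimodule isomorphism
\[
\sD^{\dagger}_{\cQ \la \cP} \;\simeq\; \sD^{\dagger}_{\cP} \ot_{\osD_{P_{\bbQ}}} \overline{\sD_{Q_{\bbQ} \la P_{\bbQ}}}
\]
of $(\sD^{\dagger}_{\cP}, \hi^{-1}\sD^{\dagger}_{\cQ})$-bimodules. In the chosen coordinates, $\sD_{Q_{\bbQ} \la P_{\bbQ}} = \omega_{P/Q}\ot_{\cO_{P_{\bbQ}}} i^{*}_{\bbQ}\sD_{Q_{\bbQ}}$ is free as a left $\sD_{P_{\bbQ}}$-module on the monomials $\partial_1^{k_1}\cdots \partial_r^{k_r}$, while $\sD^{\dagger}_{\cQ \la \cP}$ admits a parallel description over $\sD^{\dagger}_{\cP}$ using Berthelot's divided power operations $\partial_1^{[k_1]}\cdots \partial_r^{[k_r]}$ satisfying the appropriate overconvergent growth conditions. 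Matching the respective generating monomials, and using the flatness statement of Lemma \ref{platitude} together with the explicit levelwise formulas for $\hsD^{(m)}_{\cQ \la \cP}$, yields the desired bimodule isomorphism after passage to the inductive limit.

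With this identification in hand, the canonical map of Lemma \ref{map_im_dir} becomes, after unwinding and using the exactness of $i_{\bbQ *}=\hi_*$ for closed immersions, the natural morphism
\[
\sD^{\dagger}_{\cQ} \ot_{\osD_{Q_{\bbQ}}} \overline{\sD_{Q_{\bbQ} \la P_{\bbQ}} \ot^{\bL}_{\sD_{P_{\bbQ}}} \cE} \longrightarrow \sD^{\dagger}_{\cQ \la \cP} \ot^{\bL}_{\osD_{P_{\bbQ}}} \ocE,
\]
which is an isomorphism by the transfer-bimodule identification together with the flatness of $\sD^{\dagger}_{\cP}$ over $\osD_{P_{\bbQ}}$.

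The main obstacle I anticipate is precisely the verification of the bimodule isomorphism for the transfer sheaves. This amounts to a careful manipulation of Berthelot's levelwise constructions $\hsD^{(m)}_{\cQ \la \cP}$, of the compatibility under the level transitions, and of the overconvergence condition governing the inductive limit defining $\sD^{\dagger}_{\cQ \la \cP}$. Once this local calculation is in place, the globalization is automatic, since the construction is functorial in the choice of étale coordinates and the resulting local isomorphisms patch coherently on $\cQ$.
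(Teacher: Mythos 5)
Your two reduction steps (way-out functor d\'evissage to a single coherent $\cE$, and localization on $Q$) are exactly the ones the paper uses, and they are fine. The proof then breaks at the ``key technical step'': the claimed bimodule isomorphism
$$\sD^{\dagger}_{\cQ \la \cP} \;\simeq\; \sD^{\dagger}_{\cP} \ot_{\osD_{P_{\bbQ}}} \overline{\sD_{Q_{\bbQ} \la P_{\bbQ}}}$$
is false, and the source of the error is precisely the completion you mention and then gloss over. Take $Q=\bbA^1_{\fro}$ with coordinate $x$ and $P=\{x=0\}$, so $\cP=\Spf\fro$, $\sD^{\dagger}_{\cP}=\osD_{P_{\bbQ}}=K$, and $\sD_{Q_{\bbQ}\la P_{\bbQ}}\simeq K[\partial]$ is the free $K$-module on the monomials $\partial^k$. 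Then the right-hand side is $K\ot_K K[\partial]=K[\partial]$, a polynomial ring, i.e.\ an algebraic \emph{direct sum} $\bigoplus_k K\cdot\partial^k$. The left-hand side $\sD^{\dagger}_{\cQ\la\cP}$, by contrast, is built as $\varinjlim_m(\varprojlim_i f_i^*\sD^{(m)}_{Q_i})\otimes\bbQ$: its local sections are genuine overconvergent \emph{series} $\sum_k a_k\partial^{[k]}$ subject to a growth condition $|a_k|\leq c\eta^{k}$, not finite sums. Tensoring over $\osD_{P_{\bbQ}}$ with $\sD^{\dagger}_{\cP}$ only completes the tangential part of the transfer bimodule; it does nothing in the normal directions, where the free generators $\partial^{\uk}$ stay discrete. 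So ``matching the respective generating monomials'' gives only the obvious inclusion of the polynomial ring into the dagger-completed ring, not an isomorphism.

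What makes the paper's proof work is that the completion is performed on the $\cQ$-side, after pushing forward. After the further d\'evissage to $\cE=\sD_{P_{\bbQ}}$, the identity to be established at a fixed level $m$ is the paper's display~(\ref{keyf1}), namely an isomorphism between $\hi_*(\hsD^{(m)}_{\cQ\hla\cP,\bbQ})$ and $\hsD^{(m)}_{\cQ,\bbQ}\ot\overline{i_*(\sD_{Q_{\bbQ}\la P_{\bbQ}})}$. The crucial point there is the small lemma that $i_+(\sD^{(m)}_P)$ is a \emph{coherent} $\sD^{(m)}_Q$-module: coherence over $\sD^{(m)}_Q$ is what allows one to invoke \cite[3.2.4]{BerthelotDI} and identify $\hsD^{(m)}_{\cQ}\ot_{\alpha^{-1}\sD^{(m)}_Q}\alpha^{-1}i_+\sD^{(m)}_P$ with the inverse limit $\varprojlim_i i_*\sD^{(m)}_{Q_i\hla P_i}$, thereby achieving a $p$-adic completion in \emph{all} directions on $Q$, including the normal ones. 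This delicate interplay between coherence on $Q$ and $p$-adic completion is exactly what your proposed transfer-bimodule identification on $\cP$ misses. To repair your argument you would have to replace the tensor product on $\cP$ with some form of completed tensor product in the normal directions and re-establish the comparison at each level $m$, at which point you are essentially carrying out the paper's computation anyway.
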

\begin{proof} It is well known that $i_{+\Q}$ sends $D^*_{coh}(\sD_{P_{\bbQ}})$ to $D^*_{coh}(\sD_{Q_{\bbQ}})$, resp. 
$\hi_+$ sends $D^*_{coh}(\sD^{\dagger}_{P})$ to $D^*_{coh}(\sD^{\dagger}_{Q})$. Finally both functors send
$D^*_{coh}(\sD_{P_{\bbQ}})$ to $D^*_{coh}(\sD^{\dagger}_{Q})$.
The map from the left-hand side to the right-hand side is the one given in the previous lemma ~\ref{map_im_dir}.
              Since $i$ is a closed immersion, $i$ is affine, it has finite cohomological dimension and 
both functors are way out left in the sense of \cite[I,7]{hartshorne_res_dual}. Proving that the map is an isomorphism
is a local question on $Q$, so that we can assume that $P$ and $Q$ are affine. In this case any coherent $\sD_{P_{\bbQ}}$-module is a
quotient of a finite free $\sD_{P_{\bbQ}}$-module, and using a standard dévissage argument for way out left functors
~\cite[I,7, (iv)]{hartshorne_res_dual} we are reduced to prove the statelent in the case where $\cE=\sD_{P_{\bbQ}}$.
 In this case, we have the following formulas
\begin{align*}i_{\bbQ +}(\sD_{P_{\bbQ}})=i_*\left(\sD_{Q_{\bbQ}\la P_{\bbQ}}\right),\hskip10pt
               \hi_+\left(\sD^{\dagger}_{\cP}\right)  = \hi_*\left(\sD_{\cQ \la \cP}^{\dagger}\right).\end{align*}
As $\hi$ is a quasi-compact morphism, $R\hi_*=\hi_*$ commutes with inductive limits so that 
           $$     \hi_*\left(\sD^{\dagger}_{\cQ \hla \cP}\right)  = 
\varinjlim_m \hi_*\left(\hsD_{\cQ \hla\cP,\bbQ}^{(m)}\right). $$
 Let us fix an integer $m$.
 We have to show that 
\begin{gather}\label{keyf1}
 \hi_*\left(\hsD_{\cQ \hla\cP,\bbQ}^{(m)}\right)\simeq
\hsD_{\cQ,\bbQ}^{(m)}\ot_{\osD_{P_{\bbQ}}}\overline{i_*\left(\sD_{Q_{\bbQ}\la P_{\bbQ}}\right)}.\end{gather}
We first compute the left hand side of this formula. 
By \cite[Thm.3.5.3]{BerthelotIntro}, we know that $\hi_+(\hsD_{\cP}^{(m)})$ is a
coherent $\hsD_{\cQ}^{(m)}$-module, and as $\hi$ is affine, we have by ~\cite[prop. 13.2.3]{EGA3_I}
\begin{gather} \label{lhs} \addtocounter{para}{1}
\hi_+(\hsD_{\cP}^{(m)})\simeq \varprojlim_i 
i_* (\sD^{(m)}_{Q_i \hla P_i}).
\end{gather}
We now come to the right hand side of the formula ~\ref{keyf1}.
Let us first recall the following 
\begin{lemma} The sheaf $i_+(\sD^{(m)}_{P})$ is a coherent $\sD^{(m)}_{Q}$-module.
\end{lemma}
\begin{proof} We have 
\begin{align*} i_+\sD^{(m)}_{P} & =  i_*\left(i^*\omega_Q^{-1}\ot_{\cO_Q} i^*\sD^{(m)}_{Q}\ot_{\cO_P}\omega_P\right)\\
                                & \simeq   \omega_Q^{-1}\ot_{\cO_Q} \sD^{(m)}_{Q}\ot_{\cO_Q}i_*\omega_P \textrm{  by
the projection formula},
\end{align*}
the left $\sD^{(m)}_Q$-module structure being given by the one of $\omega_Q^{-1}\ot_{\cO_Q} \sD^{(m)}_{Q}$, that is by
the right structure on $\sD^{(m)}_{Q}$ twisted on the left, which makes this left $\sD^{(m)}_{Q}$-module a coherent
module.
\end{proof}

Consider now the following $\hsD^{(m)}_{\cQ}$-module, that is coherent by the previous lemma,
$$\cM=\hsD_{\cQ}^{(m)}\ot_{\alpha^{-1}\sD^{(m)}_{Q}}\alpha^{-1}  i_+\sD^{(m)}_{P}.$$
As $\cM$ is coherent, by ~\cite[3.2.4]{BerthelotDI}, we have
\begin{align*}
            \cM & \simeq \varprojlim_i \sD^{(m)}_{Q_i}\ot_{\sD^{(m)}_{Q_i}}i_* \sD^{(m)}_{Q_i\hla
P_i}  \\
            & \simeq \varprojlim_i i_* \sD^{(m)}_{Q_i \hla P_i}.
\end{align*}
As $\cM_{\bbQ}$ coincides with $\hsD_{\cQ}^{(m)}\ot_{\osD_{Q_{\bbQ}}}
\overline{i_{\Q+}(\sD_{P_{\Q}})}$, this module is isomorphic 
with the right-hand side of ~\ref{keyf1}.
We finally obtain our result by comparing with the left-hand side of ~\ref{keyf1}, which is computed using 
~\ref{lhs}, and gives exactly the same thing.
\end{proof}

\vskip5pt

As before, let $*\in\{b,-\}$.
\begin{prop} \label{compat_proj} Let $P$, $Q$ be smooth and projective $\fro$-schemes, let $f: P \ra Q$ be a proper
morphism with formal completion $\hf: \cP \ra \cQ$.
For any $\cE\in D^*_{coh}(\sD_{P_{\Q}})$, there is a natural isomorphism in
$D^*_{coh}(\sD^{\dagger}_{\cQ})$
$$\sD^{\dagger}_{\cQ}\ot_{\osD_{Q_{\bbQ}}}\overline{f_{\bbQ +}(\cE)}\simeq \hf _+\left(\sD^{\dagger}_{\cP}\ot_{\osD_{P_{\bbQ}}}\ocE\right) .$$
\end{prop}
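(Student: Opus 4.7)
The approach is the classical reduction of a proper morphism between projective schemes to a closed immersion followed by a projection from a projective bundle. Since $P$ is projective over $\fro$, I choose a closed embedding $P \hookrightarrow \bbP^N_\fro$; combined with the graph of $f$, this produces a closed immersion $i\colon P \hookrightarrow \bbP^N_\fro\times_\fro Q =: P'$, with $f = p\circ i$, where $p\colon P'\to Q$ is the second projection. Both $P'$ and $Q$ are smooth projective $\fro$-schemes, and $\hf = \hat p\circ \hat i$ after $p$-adic completion.

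Using the compatibility of algebraic and formal direct images with composition, together with the naturality of the comparison morphisms constructed in Lemma~\ref{map_im_dir}, one checks that the morphism $\theta_f$ whose bijectivity is to be proved factors (up to canonical isomorphism) through $\theta_i$ and $\theta_p$, under the identifications $f_{\bbQ+}=p_{\bbQ+}\circ i_{\bbQ+}$ and $\hf_+=\hat p_+\circ\hat\imath_+$. Hence it is enough to treat $i$ and $p$ separately, and the closed immersion $i$ is covered directly by Proposition~\ref{compat-im}.

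For the projection $p\colon P'\to Q$, the problem is local on $Q$, so I may assume $Q$ is affine. Coherence of $p_{\bbQ+}$ on $D^*_{coh}(\sD_{P'_{\bbQ}})$ and of $\hat p_+$ on $D^*_{coh}(\sD^\dagger_{\widehat{P'}})$ (both proper between smooth noetherian schemes of finite cohomological dimension) permits a standard way-out-left dévissage, exactly as in the proof of Proposition~\ref{compat-im}, reducing to the free case $\cE=\sD_{P'_{\bbQ}}$. In this case the left-hand side becomes $\sD^\dagger_{\cQ}\otimes_{\osD_{Q_{\bbQ}}}\overline{Rp_{\bbQ *}\sD_{Q_{\bbQ}\la P'_{\bbQ}}}$ and the right-hand side $R\hat p_*\sD^{\dagger}_{\cQ\la\widehat{P'}}$. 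Writing $\sD^{\dagger}_{\cQ\la\widehat{P'}}=\varinjlim_m \hsD^{(m)}_{\cQ\la\widehat{P'},\bbQ}$ and commuting $R\hat p_*$ with the filtered colimit (admissible since each term is coherent and $\hat p$ is proper noetherian), one reduces to a statement at fixed level $m$; this level-$m$ version is then treated as in Proposition~\ref{compat-im}: express $R\hat p_*\hsD^{(m)}_{\cQ\la\widehat{P'}}$ as $\varprojlim_n R(p_n)_*\sD^{(m)}_{Q_n\la P'_n}$ via \cite[13.2.3]{EGA3_I}, then identify this with the $p$-adic completion of $Rp_*\sD^{(m)}_{Q\la P'}$ by formal GAGA on the projective $Q$-scheme $P'$, and finally tensor back up with $\sD^\dagger_{\cQ}$ using the flatness statement of Lemma~\ref{platitude}.

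The main obstacle is exactly the projection step: matching the formal higher direct images along $\hat p$ of the transfer bimodules with the $p$-adic completion of their algebraic counterparts, in a way that respects both the $\sD^\dagger$-module structure and the scalar extension from $\osD_{Q_{\bbQ}}$ to $\sD^\dagger_\cQ$. Projectivity of $P'$ over $Q$ together with Berthelot's coherence theorem (\cite[3.5.3]{BerthelotIntro}, \cite[4.4.8]{Berthelot_Dmod2}) are the essential inputs placing us in a setting where formal GAGA is applicable, and one must check carefully that the colimit over $m$, the inverse limit over $n$, the derived tensor with $\sD^\dagger_{\cQ}$, and the functor $R\hat p_*$ all interchange appropriately, which ultimately rests on the noetherian and flatness properties of the arithmetic differential operator sheaves recalled in Lemma~\ref{platitude}.
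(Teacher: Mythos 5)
The overall strategy is right and matches the paper's: factor $f$ through a closed immersion into a relative projective space and a projection, dispatch the closed immersion via Proposition~\ref{compat-im}, then localize on $Q$ and d\'evisse. But there is a genuine gap in your d\'evissage step for the projection.

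You say that way-out-left d\'evissage lets you reduce to the free case $\cE = \sD_{P'_\bbQ}$, ``exactly as in the proof of Proposition~\ref{compat-im}.'' That analogy does not hold. In Proposition~\ref{compat-im} the source of the closed immersion can be taken affine (the question is local on $Q$ and $i$ is affine), and on an affine scheme every coherent $\sD$-module is a quotient of a finite free $\sD$-module, which is what makes the reduction to $\cE=\sD_{P_\bbQ}$ legitimate. For the projection $p\colon P'=\bbP^N_Q\to Q$, however, even after shrinking $Q$ to an affine, the source $P'$ remains a relative projective space and is \emph{not} affine; a coherent $\sD_{P'_\bbQ}$-module is in general not a quotient of a finite free $\sD_{P'_\bbQ}$-module. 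To run the d\'evissage one must instead invoke Serre's theorem to get surjections $\sD_{P'_\bbQ}(-a)^r\thra\cE$, so the base case of the reduction is $\cE=\sD_{P'_\bbQ}(-a)$ with an arbitrary Serre twist $a\ge 0$, not the untwisted $\sD_{P'_\bbQ}$. This is precisely what the paper does.

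This is not a cosmetic change: the presence of the twist means that in the final level-$m$ comparison you must compute $Rp_*(\omega_{P'/Q}(-a))$ (both algebraically and after $p$-adic completion), which is where the Bott-type computation for $\bbP^N$ (the paper cites Hartshorne III.5.1) enters, giving a complex concentrated in degree $0$ or $N$ depending on the sign of $a+N+1$. Your argument for the free case — matching $R\hat p_*\hsD^{(m)}_{\cQ\la\widehat{P'},\bbQ}$ with the completion of $Rp_*\sD^{(m)}_{Q\la P'}$ via formal GAGA and \cite[13.2.3]{EGA3_I} — would still be the right shape with the twist inserted, but as written it only handles $a=0$, so the proof is incomplete. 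You would also want to be a bit more careful about the phrase ``formal GAGA'': what is really needed is the base-change $Rf_{i*}\omega_{P_i}(-a)\simeq\cO_{Q_i}\ot_{\cO_Q}Rf_*\omega_P(-a)$ and commutation of $R\hat p_*$ with $R\varprojlim_i$, which is how the paper phrases it.
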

\begin{proof}

We already noticed that both functors send objects of $D^*_{coh}(\sD_{P_{\Q}})$ to objects of
 $D^*_{coh}(\sD^{\dagger}_{\cQ})$, as $f$ has finite cohomological dimension. Moreover both functors are way out left 
in the sense of \cite[I,7]{hartshorne_res_dual}.
The map from left-hand side to the right-hand side was defined in 
~\ref{map_im_dir}. 
 Using 
~\cite[\href{https://stacks.math.columbia.edu/tag/0C4Q}{Tag 0C4Q}]{StacksP}, we see that the morphism $f$ is projective.
Then, using the previous compatibility result~\ref{compat-im} for closed immersions, it is enough to prove the statement 
when $P$ is a relative
projective space over $Q$, say $P=\bbP^M_Q$ and $f$ : $\bbP^M_Q \ra Q$ is the canonical map.
 Since the question is local on $Q$, we can (and we do) assume that $Q$
 is affine and smooth with coordinates $t_1,\ldots,t_s$. Let $\cE$ be a coherent $\sD_{P,\Q}$-module. 
As $P_{\bbQ}$ is a noetherian space, $\cE$ is an inductive limit of its sub $\cO_{P_{\bbQ}}$-coherent sheaves, so that
there is a $\cO_{P_{\bbQ}}$-coherent sheaf $\cE'$ and a surjection of $\sD_{P_{\bbQ}}$-modules 
$\sD_{P_{\bbQ}}\ot_{\cO_{P_{\bbQ}}}\cE'\thra \cE$, where the $\sD_{P_{\bbQ}}$-module structure on the left hand side is
given by the one of $\sD_{P_{\bbQ}}$.
By Serre's theorem, for some $a,r\in \bbN$, there is a surjection of coherent $\cO_{P_{\bbQ}}$-modules 
$\cO_{P_{\bbQ}}(-a)^r \thra \cE'$, and we see that there is a surjection of coherent $\sD_{P_{\bbQ}}$-modules 
 $\sD_{P_{\bbQ}}(-a)^r \thra \cE$. Iterating this process, we see that each coherent $\sD_{P_{\bbQ}}$-module has some
resolution by $\sD_{P_{\bbQ}}$-modules of the type $\sD_{P_{\bbQ}}(-a)^r$. Finally using again the dévissage argument 
for way out left functors of ~\cite[I,7, (iv)]{hartshorne_res_dual} we are reduced to prove the proposition for 
a projective morphism $f: P=\bbP^M_Q \ra Q$, with $Q$ affine, endowed with coordinates, and $\cE=\sD_{P_{\bbQ}}(-a)$,
with $a\in \bbN$. Let us assume this from now on. Since the functor $Rf_*$ commutes with inductive limits, because $\bbP^M_Q$ and $Q$ are quasi-compact,
  it is also enough to prove
     that, for all $m$, we have 
\begin{gather} \label{fixm}
        \hsD^{(m)}_{\cQ}\ot_{\osD_{Q_{\bbQ}}}\overline{f_+(\sD_{P_{\Q}}(-a))}\simeq 
\hf_+\left(\hsD^{(m)}_{\cP,\bbQ}\ot_{\osD_{P_{\bbQ}}}\osD_{P_{\Q}}(-a)\right) .
\end{gather}

The following lemma therefore completes the proof of the proposition. 
\end{proof}
\begin{lemma} Assertion ~\ref{fixm} is true for any $m$.
\end{lemma}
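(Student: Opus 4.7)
The plan is to unfold both sides of~\eqref{fixm} at the integral level and reduce the comparison to Grothendieck's formal-functions theorem for proper morphisms. On the right, unwinding the definition and arguing as for $\hsD^{(m)}_{\cP\ra\cQ}$ in the proof of Lemma~\ref{map_im_dir}, one identifies
$$ \hf_+\bigl(\hsD^{(m)}_{\cP,\bbQ}\ot_{\osD_{P_{\bbQ}}}\osD_{P_\Q}(-a)\bigr) = R\hf_*\bigl(\hsD^{(m)}_{\cQ\la\cP,\Q}(-a)\bigr), $$
with $\hsD^{(m)}_{\cQ\la\cP}=\varprojlim_i \sD^{(m)}_{Q_i\la P_i}$. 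On the left, after unwinding the tensor product and the $\overline{(\,-\,)}$ functor, one gets $\hsD^{(m)}_{\cQ,\Q}\ot_{\sD^{(m)}_{Q,\Q}}Rf_{\Q*}\bigl(\sD^{(m)}_{Q\la P,\Q}(-a)\bigr)$. Thus the assertion reduces to showing that the canonical scalar-extension map
$$ \hsD^{(m)}_{\cQ}\ot_{\alpha^{-1}\sD^{(m)}_Q}\alpha^{-1}Rf_*\bigl(\sD^{(m)}_{Q\la P}(-a)\bigr)\car R\hf_*\bigl(\hsD^{(m)}_{\cQ\la\cP}(-a)\bigr) $$
is a quasi-isomorphism of $\hsD^{(m)}_{\cQ}$-modules, after which inverting $p$ and applying Lemma~\ref{mapbar} will yield~\eqref{fixm}.

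I would attack this integral comparison via the order filtration on the transfer bimodule $\sD^{(m)}_{Q\la P}=\omega_{P/Q}\ot_{\cO_P}f^*\sD^{(m)}_Q$ induced by the filtration of $\sD^{(m)}_Q$ by order. Its graded pieces are of the form $\omega_{P/Q}\ot_{\cO_P}f^*\cF_n$ for finite-rank locally free $\cO_Q$-modules $\cF_n$. Twisting by $\cO_P(-a)$ and applying the projection formula, one reduces the computation of $Rf_*$ on each graded piece to Serre's explicit computation of $H^*(\bbP^M_Q,\cO(-a))$; in particular each $R^jf_*$ of a graded piece is a coherent $\cO_Q$-module, and only finitely many cohomological degrees contribute. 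For such coherent sheaves, Grothendieck's comparison theorem~\cite[(5.1.2)]{EGA3_I} identifies the $p$-adic completion of $R^jf_*$ with $R^j\hf_*$ of the completed sheaf. A filtered spectral-sequence argument, whose convergence is guaranteed by the Serre vanishing bound on $\bbP^M_Q$, then assembles the comparison from graded pieces to the full bimodule.

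The main obstacle will be the bookkeeping of the $\sD^{(m)}_Q$-module structures: one must check that the order filtration on $\sD^{(m)}_{Q\la P}$ is compatible with the left $f^{-1}\sD^{(m)}_Q$-action (so that the scalar extension on the left-hand side preserves the filtration and passes through the spectral sequence), and that the comparison at each graded level is $\sD^{(m)}_Q$-equivariant. Once this is verified, the dévissage arguments from~\cite[I,\S 7]{hartshorne_res_dual} already invoked in the proof of Proposition~\ref{compat_proj} --- which reduce the problem to $\cE=\sD_{P_\Q}(-a)$ --- combine with the filtered comparison to give the integral isomorphism; rationalization via flatness of $\hsD^{(m)}_{\cQ,\Q}$ over $\osD_{Q_{\bbQ}}$ and exactness of $\overline{(\,-\,)}$ conclude the proof.
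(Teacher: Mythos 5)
Your reduction of \ref{fixm} to an integral statement about $\hsD^{(m)}_{\cQ}\ot_{\alpha^{-1}\sD^{(m)}_Q}\alpha^{-1}Rf_*\bigl(\sD^{(m)}_{Q\la P}(-a)\bigr)\to R\hf_*\bigl(\hsD^{(m)}_{\cQ\la\cP}(-a)\bigr)$ is the same as the paper's, and both proofs ultimately rest on the dévissage to $\sD_{P_\Q}(-a)$, the projection formula, and the Serre/Hartshorne computation of $H^*(\bbP^M_Q,\cO(-a))$. Where you diverge is in the mechanism for the integral comparison: you filter $\sD^{(m)}_{Q\la P}$ by order so as to land on coherent $\cO$-modules and then invoke the formal functions theorem \cite[(5.1.2)]{EGA3_I} on each graded piece, assembling via a filtered spectral sequence. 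The paper instead applies the projection formula \emph{once} to the whole transfer bimodule — this is legitimate because one factor is a pullback $f^*\sD^{(m)}_Q$ and the projection formula holds for any quasi-coherent $\cG$ on a noetherian base, so no filtration is needed. After that step the direct image is a complex concentrated in a single degree that is a finite direct sum of (twisted) copies of $\sD^{(m)}_Q$. The paper then avoids formal functions entirely: it uses $\cA\simeq R\varprojlim_i(\sD^{(m)}_{Q_i}\ot^L_{\hsD^{(m)}_{\cQ}}\cA)$ for coherent $\hsD^{(m)}_{\cQ}$-complexes \cite[3.2.1]{BerthelotIntro}, the commutation of $R\hf_*$ with $R\varprojlim$ (Stacks Tag 0BKP), and an explicit mod-$\varpi^{i+1}$ comparison of free complexes.

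Your route can be made to work, but as written it has a genuine gap that you yourself flag and then leave open: the order filtration on $\sD^{(m)}_{Q\la P}$ is \emph{not} a filtration by $f^{-1}\sD^{(m)}_Q$-submodules, since multiplication by a section of $\sD^{(m)}_Q$ of order $\le k$ raises the filtration degree by up to $k$. So the spectral sequence has to be set up in the framework of filtered modules over the filtered ring $(\sD^{(m)}_Q,\mathrm{ord})$, with the scalar extension map compatible with this \emph{filtered}, not graded, structure. You also need a convergence argument for the spectral sequence of an exhaustive but unbounded filtration: the Serre bound controls the $q$-direction (only $R^0f_*$ and $R^Mf_*$ can be nonzero on each graded piece), but the filtration degree is unbounded, so you still need to pass to a colimit over the filtration steps and verify this commutes with $R\hf_*$ and with $p$-adic completion. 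None of this is impossible, but it is considerably heavier machinery than the paper uses, and the compatibility check you defer (``one must check that the order filtration\ldots is compatible with the left $f^{-1}\sD^{(m)}_Q$-action'') is precisely the nontrivial point. The simpler observation — that the projection formula already peels $\sD^{(m)}_Q$ off in one step, leaving a coherent $\cO_P$-twist of $\omega_{P/Q}$ under $Rf_*$ — would let you drop the filtration and the spectral sequence entirely.
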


\begin{proof}
Let $\cF= \sD^{(m)}_{P}(-a)$, we have 
\begin{align*}
f_+(\cF) & = Rf_*\left(\sD^{(m)}_{Q \la P}\ot_{\sD^{(m)}_P}\sD^{(m)}_P(-a) \right) \\
          & =  Rf_*\left(f^*\sD^{(m)}_{Q}\ot_{\cO_P}\omega_{P/Q}(-a)\right),\\
          & =  (\sD^{(m)}_{Q} \ot_{\cO_Q} \omega_Q^{-1})\ot_{\cO_Q} Rf_*(\omega_{P}(-a)) \textrm { (by the projection
formula)},
\end{align*}
where the left $\sD_Q^{(m)}$-module structure is given by the left structure of $\sD^{(m)}_{Q}\ot_{\cO_Q}
\omega_Q^{-1}$, obtained by twisting the right structure of $\sD^{(m)}_{Q}$. As $\omega_Q$ is free of rank $1$, 
$$\omega_P\simeq \omega_{P}\ot_{\cO_P}f^* \omega_Q^{-1}\simeq \omega_{P/Q} \simeq \cO_P(-M-1),$$
where $M:=\dim P_{\Q}-\dim Q_{\Q}$. We refer for example to ~\cite[III,thm. 5.1]{HartshorneA} for the computation 
of $Rf_*(\cO_P(-M-1))$ over any affine base $Q$, which is a complex of finite free $\cO_Q$-modules. More precisely,
denote $$d=\max\{rank(H^0(P,\cO_P(-a-M-1))),rank(H^M(P,\cO_P(-a-M-1)))\}.$$ 
There are several cases:
         \begin{enumerate}\item If $a\leq -M-1$, then $f_+(\cF)\simeq\sD^{(m)}_{Q}\ot_{\cO_Q}\cO_Q^d$ is concentrated in
degree $0$,
		\item if $a\geq 0$, then $f_+(\cF)\simeq \sD^{(m)}_{Q}\ot_{\cO_Q}\cO_Q^d[-M]$ is concentrated in degree $M$,
		\item if $-M \leq a \leq -1$, then $f_+(\cF)$=0.
		\end{enumerate}
Note also that we have the following isomorphism of (twisted) left 
$\hsD^{(m)}_{\cQ,\bbQ}$-modules
$$   \hsD^{(m)}_{\cQ,\bbQ}\ot_{\osD^{(m)}_{Q}}(\osD^{(m)}_{Q}\ot_{\ocO_{Q_{\bbQ}}} \overline{\omega_{Q}}^{-1})
\simeq \hsD^{(m)}_{\cQ,\bbQ}\ot_{\cO_{\cQ}}\omega_{\cQ}^{-1}.$$
We will first compute the left-hand side of ~\ref{fixm}. Let us denote 
\begin{align*}
\cA &:=  \hsD^{(m)}_{\cQ}\ot_{\alpha^{-1}\sD^{(m)}_{Q}}\alpha^{-1}{f_+(\cF)}\in D^b_{coh}(\hsD^{(m)}_{\cQ}) \\
     &=  (\hsD^{(m)}_{\cQ}\ot_{\cO_Q}\omega_{\cQ}^{-1})\ot_{\alpha^{-1}\cO_{Q}}\alpha^{-1}{Rf_*(\omega_{P}(-a))}.
\end{align*}
This is a complex concentrated in at most one degree, where
 it is isomorphic to a direct sum of $d$ copies of $\hsD^{(m)}_{\cQ}$. In particular, by 
\cite[3.2.1]{BerthelotIntro}, it satisfies $\cA\simeq R\varprojlim_i (\sD^{(m)}_{Q_i}\ot^{L}_{\hsD^{(m)}_{\cQ}}\cA)$. Since 
$\cA$ is a complex of finite free $\hsD^{(m)}_{\cQ}$-modules, we have that 
$$ \sD^{(m)}_{Q_i}\ot^{L}_{\hsD^{(m)}_{\cQ}}\cA=\sD^{(m)}_{Q_i}\ot_{\hsD^{(m)}_{\cQ}}\cA 
\simeq \sD^{(m)}_{Q_i}\ot_{\cO_{Q_i}}\omega^{-1}_{{Q_i}}
\ot_{\alpha^{-1}\cO_Q}\alpha^{-1}{Rf_*(\omega_{P}(-a))},$$
 is a complex either in degree $M$ or $0$, where it is isomorphic to a direct sum of $d$ copies of $\sD^{(m)}_{Q_i}$. Finally 
we have 
$$\cA \simeq R\varprojlim_i \left(\sD^{(m)}_{Q_i}\ot_{\cO_{Q_i}}\omega^{-1}_{{Q_i}}
\ot_{\alpha^{-1}\cO_{Q}}\alpha^{-1}{Rf_*(\omega_{P}(-a))}\right).$$

To compute the right-hand side of ~\ref{fixm}, we introduce 
$\cB=\hf_+(\hsD^{(m)}_{\cP}(-a))$, so that we have 
\begin{align*} \cB & \simeq
R\hf_*\left(\hf^*(\hsD^{(m)}_{\cQ}\ot_{\cO_{\cQ}}\omega^{-1}_{\cO_{\cQ}})\ot_{\cO_{\cP}}\omega_{\cP}(-a)\right) \\
                 & \simeq  R\hf_*R\varprojlim_i\left(f_i^*(\sD^{(m)}_{Q_i}\ot_{\cO_{Q_i}}\omega^{-1}_{Q_i})
\ot_{\cO_{P_i}}\omega_{P_i}(-a)\right) \\
                 & \simeq  R\varprojlim_i Rf_{i*}\left(f_i^*(\sD^{(m)}_{Q_i}\ot_{\cO_{Q_i}}\omega^{-1}_{Q_i})
\ot_{\cO_{P_i}}\omega_{P_i}(-a)\right) 
\textrm{ by ~\cite[\href{https://stacks.math.columbia.edu/tag/0BKP}{Tag 0BKP}]{StacksP} }\\
                 & \simeq  R\varprojlim_i \left(\sD^{(m)}_{Q_i}\ot_{\cO_{Q_i}}\omega^{-1}_{Q_i}\ot_{\cO_{Q_i}}
                 Rf_{i*}\omega_{P_i}(-a)\right).
\end{align*}
Again, by using the computation of ~\cite[Thm. III.5.1]{HartshorneA}, we see that 
$$Rf_{i*}\omega_{P_i}(-a)\simeq Rf_{i*}\cO_{P_i}(-a-M-1)$$ is a complex
 concentrated in only one degree and $$Rf_{i*}\omega_{P_i}(-a)\simeq \cO_{Q_i}\ot_{\cO_{Q}} Rf_{*}\omega_{P}(-a). $$
This finally shows that $\cB$ is isomorphic to $\cA$. This implies the lemma.\end{proof}

\vskip5pt

\subsection{Compatibility for intermediate extensions of constant coefficients}\label{sec_compatibillity}
We now come to the main application of our previous compatibility results. For this we place ourselves in the following axiomatic situation (S): 
\vskip10pt
\begin{enumerate} \item[(i)] $Y$ is an affine and smooth scheme over $\fro$.\\
                   \item[(ii)] there is an immersion $v: Y \hrig P$ into a smooth projective scheme $P$ over $\fro$. Let $X:=\overline{Y}$ be
                    the Zariski closure of $Y$ in $P$ and $Z:=X \backslash Y$.\\
                   \item[(iii)] There is a smooth and projective $\fro$-scheme $X'$, a surjective morphism 
                   $$b: X' \ra X$$ 
                     inducing an isomorphism $Y':=b^{-1}Y \simeq Y$, such that $Z'=X' \backslash Y'$ is a transversal
                     divisor as defined in ~\ref{deftrans} with normal crossings. We have the open immersion 
                     $j': Y\simeq b^{-1}Y\hra X'$.\\
\end{enumerate}

As usual $\frX,\frY,...$ denote the formal schemes obtained from these schemes by $p$-adic completion, and 
$X_s,Y_s,...$ denote their special fiber.  For simplicity, we also write $v$ for the morphism of frames 

$$ v: \bbY=(Y_s,X_s,\cP) \longrightarrow (P_s,P_s,\cP)=\bbP$$

induced by the immersion $v: Y \hrig P$. Let us introduce the 
composite morphism $$g: X' \sta{b}{\ra}X \hra P.$$ By $p$-adic completion 
we obtain a morphism $\hg $: 
$\frX' \ra \cP$, and a morphism of frames $$u=(Id_{Y_s},b_s,\hg): \bbY'=(Y_s,X'_s,\frX')\ra \bbY=(Y_s,X_s,\cP).$$ Denoting 
$G=(g_s,g_s,\hg)$ and $J'=(j'_s,{\rm id}_{X'_s},{\rm id}_{\frX'})$,
we then have the basic commutative diagram of frames:
$$\xymatrix{ \bbY'=(Y_s,X'_s,\frX') \ar@{->}[r]^{J'}\ar@{->}[d]^{u} & (X'_s,X'_s,\frX')=\bbX'\ar@{->}[d]^{G} \\
               \bbY=(Y_s,X_s,\cP) \ar@{->}[r]^{v} & (P_s,P_s,\cP)=\bbP.
}$$
The frame morphism $u$ is c-affine, and the first morphism of this frame is equal to the identity,
 so that by ~\cite[1.2.8]{AbeCaro}, we know that 
$u_!$ and $u_+$ are $t$-exact, and that we have $u_!=u_+$, as functors of abelian categories $F$-$\text{Ovhol}(\bbY')\ra
F$-$\text{Ovhol}(\bbY)$, and that these functors are in fact equal to ${\cH^0_t u_+=\cH^0_t u_!}$.
Let $$Q:=v\circ u=G\circ J',$$ which is a c-affine immersion, (in particular $Y_s \hra P_{s}$ is an immersion). 
Note that we have $$ J'_+ \cO_{\bbY'}= \cO_{\frX',\Q}({^\dagger Z'_s}),$$ and that in our case 
$J'_+$ is the forgetful functor $F$-$\text{Ovhol}(\bbY')\ra F$-$\text{Ovhol}(\bbX')$.
Let $v_{\bbQ}$ be the immersion $Y_{\bbQ} \hra P_{\bbQ}$.
We now fix once and for all the following notations: 
$$
\begin{array}{ccccc}
 \theta_{v_{\Q}} &  :=  & \theta_{v_{\Q},\cO_{Y_{\bbQ}}}: {v_{\bbQ!}\cO_{Y_{\bbQ}}}\ra 
{v_{\bbQ +}\cO_{Y_{\bbQ}}}   &\hskip20pt  \text{resp.} \hskip20pt  & \theta_{j'_{\Q}}:=\theta_{j'_{\Q},\cO_{Y_{\bbQ}}}  \\
  &   &   & & \\
 \theta_Q:=\theta^0_{Q,\cO_{\bbY'}} & =  &  \theta_{Q,\cO_{\bbY'}}: Q_!\cO_{\bbY'}\ra Q_+\cO_{\bbY'}  & \hskip20pt \text{resp.} \hskip20pt
 & \theta_{J'}:=\theta_{J',\cO_{\bbY'}}. \\
   &   &   & & 
\end{array}
$$
We also need the two morphisms
$$  \theta_v^{alg}   :=   \overline{\theta}_{v_{\Q}}  \hskip20pt \text{resp.} \hskip20pt   \theta_{j'}^{alg}:=\overline{\theta}_{j'_{\Q}}.$$

Our goal is to describe the relation between the classical intermediate extension 
$v_{\bbQ!+}\cO_{Y_{\bbQ}}$ on the generic fibre and the Abe-Caro intermediate extension $v_{!+}\cO_{\bbY}$ on the special fibre. 

We start with the following lemma. 

\begin{lemma} \label{thetacomp0} We have the following commutative diagram in 
$F$-$\text{\rm Ovhol}(\bbX')$
$$\xymatrix{\sD^{\dagger}_{\frX'}\ot_{{\osD_{X'_{\Q}}}} \overline{j'_{\Q!}\cO_{Y_{\Q}}}
\ar@{->}[r]^{}
\ar@{->}[d]^{}_{(1)}^{\simeq}&
\sD^{\dagger}_{\frX'}\ot_{{\osD_{X'_{\Q}}}}\overline{j'_{\Q+}\cO_{Y_{\Q}}}   \ar@{->}[d]^{\simeq}_{(3)}    \\
  J'_!\cO_{\bbY'} \ar@{->}[r]^{\theta_{J'}} &     J'_+\cO_{\bbY'}
 }$$
where all maps are canonical and where the upper horizontal arrow equals $\sD^{\dagger}_{\frX'}\ot\theta_{j'}^{alg}$.
\end{lemma}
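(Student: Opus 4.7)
Two of the three claims in the diagram are immediate consequences of earlier results. The right vertical arrow (3) is Proposition~\ref{rap_trento} applied to the relative normal crossings divisor $Z'\subset X'$ (which exists by hypothesis (iii)): it furnishes the isomorphism $J'_+\cO_{\bbY'}=\cO_{\frX',\Q}({^\dagger Z'_s})\simeq \sD^{\dagger}_{\frX'}\otimes_{\osD_{X'_{\Q}}}\overline{j'_{\Q+}\cO_{Y_{\Q}}}$. The left vertical arrow (1) is the canonical isomorphism of Proposition~\ref{Jexcl}(ii).

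For the commutativity, the strategy is to present both horizontal arrows as images of a common underlying construction under compatible functors. Both $\theta_{j'_{\Q}}$ and $\theta_{J'}$ can be obtained from the adjunction morphism $\mathrm{id}\to J'_+J'^{!}$ (resp.\ its algebraic analogue $\mathrm{id}\to j'_{\Q+}j'^{!}_{\Q}$) applied to the constant object, via biduality. Precisely, from $j'_{\Q!}=\bbD_{X'_{\Q}}\circ j'_{\Q+}\circ\bbD_{Y_{\Q}}$ and $\bbD_{Y_{\Q}}\cO_{Y_{\Q}}=\cO_{Y_{\Q}}$ (Proposition~\ref{Jexcl}(i)), the morphism $\theta_{j'_{\Q}}: j'_{\Q!}\cO_{Y_{\Q}}\to j'_{\Q+}\cO_{Y_{\Q}}$ is identified with $\bbD_{X'_{\Q}}$ applied to the adjunction unit $\cO_{X'_{\Q}}\to j'_{\Q+}j'^{!}_{\Q}\cO_{X'_{\Q}}$. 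The analogous identification holds in the overholonomic setting for $\theta_{J'}$ and the adjunction unit $\cO_{\bbX'}\to J'_+J'^{!}\cO_{\bbX'}$, by the construction of $\theta_u$ in \cite[1.3.4]{AbeCaro}.

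The commutativity of the diagram then decomposes into two compatibilities under the scalar extension functor $\sD^{\dagger}_{\frX'}\otimes_{\osD_{X'_{\Q}}}\overline{(-)}$: one for the adjunction unit, and one for biduality. The first is exactly the content of Proposition~\ref{adjoint} (applied with $Z=Z'$ and $\cE=\cO_{X'_{\Q}}$); the second is supplied by Proposition~\ref{compat_dual}, with its compatibility with the adjunction packaged in Corollary~\ref{bidual2}. Pasting the two resulting commutative squares together, and applying $\bbD$ on both sides (which commutes with scalar extension by Proposition~\ref{compat_dual}), yields the commutativity of the diagram of the lemma.

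The main obstacle is bookkeeping: one must pin down that the abstract description of $\theta_{J'}$ from \cite[1.3.4]{AbeCaro} really coincides with the \textquotedblleft biduality-of-adjunction-unit\textquotedblright\ formula used above, and that the isomorphisms of Propositions~\ref{rap_trento} and \ref{Jexcl}(ii) intertwine the two descriptions. Once this identification is recorded, the proof reduces to concatenating commutative squares already established earlier in Section~\ref{debcompat}.
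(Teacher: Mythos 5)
Your strategy is in the right general spirit as the paper's — insert an intermediate column and reduce to two smaller commutative squares, invoking Propositions~\ref{adjoint}, \ref{compat_dual}, and Corollary~\ref{bidual2} — but the specific decomposition you choose is different from the paper's, and your key identification of $\theta_{j'_{\Q}}$ is wrong as stated. You claim that $\theta_{j'_{\Q}}\colon j'_{\Q!}\cO_{Y_{\Q}}\to j'_{\Q+}\cO_{Y_{\Q}}$ \emph{equals} $\bbD_{X'_{\Q}}$ applied to the unit $\cO_{X'_{\Q}}\to j'_{\Q+}j'^{!}_{\Q}\cO_{X'_{\Q}}$. But applying $\bbD$ to that unit (using $j'_{\Q!}=\bbD j'_{\Q+}\bbD$ and $\bbD\cO_{Y_\Q}=\cO_{Y_\Q}$, $\bbD\cO_{X'_\Q}=\cO_{X'_\Q}$) yields a morphism $j'_{\Q!}\cO_{Y_{\Q}}\to\cO_{X'_{\Q}}$, i.e.\ the \emph{counit}, whose target is $\cO_{X'_{\Q}}$ and not $j'_{\Q+}\cO_{Y_{\Q}}$. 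What is true is that $\theta_{j'_{\Q}}$ \emph{factors} as $j'_{\Q!}\cO_{Y_{\Q}}\xrightarrow{\text{counit}}\cO_{X'_{\Q}}\xrightarrow{\text{unit}}j'_{\Q+}\cO_{Y_{\Q}}$; but this is a derived property (specific to the constant coefficient), not the definition, and you would in addition have to establish the analogous factorization of Abe-Caro's $\theta_{J'}$ — precisely the ``pin down'' point you flag at the end, which is the real content of your approach and is left unaddressed.

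The paper's proof chooses a different, more canonical factorization that sidesteps this issue entirely. It inserts the middle column $j'_{\Q+}j'^{!}_{\Q}j'_{\Q!}\cO_{Y_{\Q}}$ (resp.\ $J'_+J'^{!}J'_!\cO_{\bbY'}$), writing $\theta$ as the adjunction unit $j'_!\to j'_+j'^{!}j'_!$ followed by the canonical isomorphism $j'^{!}j'_!\simeq\mathrm{id}$. This is essentially the definition of Abe-Caro's $\theta$ for an affine open immersion, so no extra identification of $\theta_{J'}$ with an auxiliary formula is needed. With that middle column, the left square is Proposition~\ref{adjoint} applied with $\cE=j'_{\Q!}\cO_{Y_{\Q}}$ (not $\cE=\cO_{X'_{\Q}}$, as you propose), and the right square is exactly Corollary~\ref{bidual2} applied with $\cE=\cO_{Y_{\Q}}$. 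Proposition~\ref{Jexcl}(ii) then supplies the isomorphism $(1)$, so the outer square commutes with vertical isomorphisms. Your route could in principle be made to work, but it requires an extra lemma (the counit-then-unit description of $\theta_{J'}$ for the constant object in the overholonomic setting) that the paper's choice of intermediate column avoids, and as written it also misidentifies the morphism $\bbD(\text{unit})$.
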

\begin{proof} The diagram of the statement can be completed by the following diagram
$$\xymatrix{\sD^{\dagger}_{\frX'}\ot_{{\osD_{X'_{\Q}}}} \overline{j'_{\Q!}\cO_{Y_{\Q}}}
\ar@{->}[r]^>>>>>{c_\Q}\ar@{->}@/^2pc/[rr]^{\sD^{\dagger}_{\frX'}\ot\theta_{j'}^{alg}}
\ar@{->}[d]^{}_{(1)}&
\sD^{\dagger}_{\frX'}\ot_{{\osD_{X'_{\Q}}}}\overline{j'_{\Q+}{j'}_{\Q}^{!} j'_{\Q!}\cO_{Y_{\Q}}}\ar@{->}[r]^>>>>>{\simeq}
\ar@{->}[d]^{}_{(2)} &
\sD^{\dagger}_{\frX'}\ot_{{\osD_{X'_{\Q}}}}\overline{j'_{\Q+}\cO_{Y_{\Q}}}   \ar@{->}[d]^{\simeq}_{(3)}    \\
  J'_!\cO_{\bbY'} \ar@{->}[r]^{C} \ar@{->}@/_2pc/[rr]^{\theta_{J'}} & J'_+{J'}^! J'_! \cO_{\bbY'} \ar@{->}[r]^{\simeq}&
    J'_+\cO_{\bbY'}
 .}$$ Let us prove that both squares of this diagram are commutative.
The isomorphism $(3)$ is given by Berthelot's result~\ref{rap_trento}. The right square of
 this diagram is commutative by ~\ref{bidual2}, and the horizontal maps of this square are isomorphisms, so
that (2) is an isomorphism as well. The left square of this diagram is commutative by ~\ref{adjoint} applied to 
$\overline{j'_{\Q!}\cO_{Y_{\Q}}}$ and ~\ref{Jexcl}. Moreover (ii) of ~\ref{Jexcl} tells us that (1) is an isomorphism.
We conclude that the external square is commutative with vertical arrows being isomorphisms.
\end{proof}
Recall that $Q=G\circ J'=v\circ u$, $v_{\Q}=g_{\Q}\circ {j'}_{\Q}$, 
 $\theta_{v}^{alg}=\overline{\theta}_{v_{\Q},\cO_{Y_{\Q}}}$ and 
            $\theta_{Q}=\theta_{Q,\cO_{\bbY}}$.
\begin{cor} \label{thetacomp} There is a commutative diagram (with canonical vertical maps) in 
           $F$-$\text{\rm Ovhol}(\bbP)$
           $$\xymatrix{\sD^{\dagger}_{\cP}\ot_{{\osD_{P_{\Q}}}} \overline{{v_{\Q!}}\cO_{Y_{\Q}}}
            \ar@{->}[r]^{}
             \ar@{->}[d]^{}^{\simeq}&
          \sD^{\dagger}_{\cP}\ot_{{\osD_{P_{\Q}}}}\overline{v_{\Q+}\cO_{Y_{\Q}}}  
                \ar@{->}[d]^{\simeq}    \\
             Q_!\cO_{\bbY'} \ar@{->}[r]^{\theta_{Q}} &     Q_+\cO_{\bbY'}
                }$$
                where the upper horizontal arrow equals the map $\sD^{\dagger}_{\cP}\ot\theta_{v}^{alg}$.
\end{cor}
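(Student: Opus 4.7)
The plan is to deduce the corollary from Lemma \ref{thetacomp0} by pushing the diagram there forward along the proper direct image $\hg_+$ and reinterpreting the result via the factorizations $Q = G\circ J'$ on the special fibre and $v_{\Q}= g_{\Q}\circ j'_{\Q}$ on the generic fibre.

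First, I would apply $\hg_+$ to the commutative square of Lemma \ref{thetacomp0}. Since $g: X'\to P$ is a proper morphism between smooth projective $\fro$-schemes, Proposition \ref{compat_proj} furnishes a natural isomorphism
$$\hg_+\bigl(\sD^{\dagger}_{\frX'}\ot_{\osD_{X'_{\Q}}}\overline{\cE}\bigr)\;\simeq\;\sD^{\dagger}_{\cP}\ot_{\osD_{P_{\Q}}}\overline{g_{\Q +}\cE}$$
for every $\cE\in D^b_{coh}(\sD_{X'_{\Q}})$. Specializing to $\cE = j'_{\Q!}\cO_{Y_{\Q}}$ and $\cE = j'_{\Q+}\cO_{Y_{\Q}}$, and using $v_{\Q!} = g_{\Q +} j'_{\Q!}$ and $v_{\Q+} = g_{\Q+}j'_{\Q+}$ (which hold because $g_{\Q}$ is proper, hence $g_{\Q!}=g_{\Q+}$), the two objects on the top row of the transported diagram are rewritten as $\sD^{\dagger}_{\cP}\ot_{\osD_{P_{\Q}}}\overline{v_{\Q!}\cO_{Y_{\Q}}}$ and $\sD^{\dagger}_{\cP}\ot_{\osD_{P_{\Q}}}\overline{v_{\Q+}\cO_{Y_{\Q}}}$. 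Naturality of the isomorphism of Proposition \ref{compat_proj} identifies the induced horizontal map with $\sD^{\dagger}_{\cP}\ot \theta_{v}^{alg}$, where $\theta_{v_{\Q}} = g_{\Q+}(\theta_{j'_{\Q}})$ follows from the classical composition formula for generic-fibre intermediate extension maps combined with the properness of $g_{\Q}$.

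Second, for the bottom arrow I would apply the composition formula \eqref{compatib} to the decomposition $Q = G\circ J'$, obtaining
$$\theta_{Q,\cO_{\bbY'}} \;=\; \theta_{G,J'_+\cO_{\bbY'}}\circ G_!(\theta_{J',\cO_{\bbY'}}).$$
Because $G$ is complete (it arises from the proper morphism $g$), the natural transformation $\theta_G$ is a canonical isomorphism and $G_!=G_+=\hg_+$. Hence $\theta_Q$ is canonically identified with $\hg_+(\theta_{J'})$, which is exactly the image of the bottom arrow of Lemma \ref{thetacomp0} under $\hg_+$. The two vertical arrows of Lemma \ref{thetacomp0}, being isomorphisms, remain isomorphisms after applying $\hg_+$, and once composed with the isomorphism of Proposition \ref{compat_proj} they yield the two vertical isomorphisms displayed in the target diagram.

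The desired commutative square is then obtained from the square of Lemma \ref{thetacomp0} by applying the exact functor $\hg_+$ and pasting the natural isomorphisms furnished by Proposition \ref{compat_proj}. The main obstacle will be bookkeeping: one must verify that the composition formula \eqref{compatib}, the identification $G_!=G_+$ for the complete morphism $G$, and the naturality of the comparison isomorphism of Proposition \ref{compat_proj} all fit together compatibly. No genuinely new geometric input beyond these three ingredients is required.
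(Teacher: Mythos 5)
Your proposal follows essentially the same route as the paper's own proof: apply $\hg_+$ to the square of Lemma \ref{thetacomp0}, invoke the projective-morphism compatibility of Proposition \ref{compat_proj} to rewrite the resulting objects over $\cP$, and use the composition formula \eqref{compatib} together with the properness of $G$ (resp.\ $g_{\bbQ}$) to identify $\theta_Q$ with $\hg_+(\theta_{J'})$ (resp.\ $\theta_{v_{\bbQ}}$ with $g_{\bbQ+}(\theta_{j'_{\bbQ}})$). The extra bookkeeping you flag is precisely what the paper elides, so no gap.
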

\begin{proof} As $G$ is c-proper, $G_+=G_!$, and using~\ref{compatib}, we can see 
that $\theta_Q=G_+\circ\theta_{J'}$. Similarly, we have the equality $v_{\bbQ}=g_{\bbQ}\circ j'_{\bbQ}$, and as 
$g_{\bbQ}$ is proper, $\theta_{v_{\bbQ}}=g_{\bbQ +}\circ \theta_{j'_{\bbQ}}.$ We finally use the compatibility
 for projective morphisms ~\ref{compat_proj}, and we observe that we obtain the diagram of the corollary after 
applying $\hg_+$ to the previous diagram ~\ref{thetacomp0}.
\end{proof}

Remark: We have the identifications $\theta_{j'_{\bbQ}}({\cO_{Y_{\bbQ}}})\simeq \cO_{X'_{\bbQ}}$ and 
$\theta_{J'}\cO_{\frX',\Q}({^\dagger Z'_s})\simeq \cO_{\frX',\bbQ}$, and that 
$$  \sD^{\dagger}_{\frX'}\ot_{\osD_{X'_{\bbQ}}}\overline{\cO_{X'_{\bbQ}}}\simeq \cO_{\frX',\bbQ}.$$

\vskip5pt

We have the constant overholonomic modules on $\bbY$ resp. $\bbY'$
$$\cO_{\bbY}=R\Gamma_{\bbY}(\cO_{\cP,\Q})[d] \hskip20pt \text{resp.}\hskip20pt \cO_{\bbY'}=R\Gamma_{\bbY'}(\cO_{\frX',\Q})=\cO_{\frX',\Q}({^\dagger Z'_s})$$
as defined in \ref{def_constant}, where $d=\dim P_s-\dim Y_s$.

\begin{lemma} \label{uplus}There are canoncical isomorphisms \begin{enumerate}\item $u^! \cO_{\bbY}\simeq\cO_{\bbY'}$,
            \item $u_+\cO_{\bbY'}\simeq\cO_{\bbY}$.
             \end{enumerate}
\end{lemma}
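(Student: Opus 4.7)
I will verify both statements by using Lemma~\ref{LemFond} to reduce each to an assertion on the open complement of the relevant divisor, where the morphism $u$ effectively becomes an isomorphism (since $b$ restricts to an isomorphism $Y'\xrightarrow{\sim}Y$, so that the induced morphism of the ``open parts'' of the two frames is the identity).

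For (i), I first construct a natural comparison map $u^{!}\cO_{\bbY}\to\cO_{\bbY'}$, using the identification $\cO_{\bbY} = R\Gamma_{\bbY}(\cO_{\cP,\Q})[d]$ together with the functoriality of $R\Gamma$ and the composition law $u^{!}\circ v^{!}=Q^{!}$ for c-immersions. To show this map is an isomorphism in $D^{b}_{\mathrm{ovhol}}(\bbY'/K)$, Lemma~\ref{LemFond} reduces the problem to the open complement $\cU':=\frX'\setminus Z'_s$. There, $\cO_{\bbY'}|_{\cU'}\simeq\cO_{\cU',\Q}$ since the functor $(^{\dagger}Z'_s)$ acts trivially away from $Z'_s$. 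The morphism $\hg|_{\cU'}$ factors, via the isomorphism $b\colon Y'\xrightarrow{\sim}Y$, through the lift of $Y$ in $\cP$, and combining this with the description of $\cO_{\bbY}|_{\cU}$ from the proof of Proposition~\ref{constirred} (with $\cU:=\cP\setminus(X_s\setminus Y_s)$) one computes that $u^{!}\cO_{\bbY}|_{\cU'}$ is likewise the constant isocrystal $\cO_{\cU',\Q}$, and the comparison map is the identity there.

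For (ii), I will exploit the adjunction $(u_+,u^{!})$: part (i) provides via adjunction a canonical morphism $u_+\cO_{\bbY'}\to\cO_{\bbY}$. Since $u$ is c-affine with first component the identity, $u_+=u_!$ is $t$-exact by \cite[1.2.8]{AbeCaro}, as noted in the preamble to Section~\ref{sec_compatibillity}; in particular the source is concentrated in degree zero. Applying Lemma~\ref{LemFond} now to $\bbY$, it suffices to verify this map is an isomorphism after restriction to $\cU$. Over $\cU$, the compatibility statement Proposition~\ref{compat_proj} for the proper morphism $\hg$, together with the projection-formula behaviour of the direct image on the constant module (and, on the generic fibre side, the classical identity $g_{\Q +}\cO_{Y'_{\Q}}\simeq \cO_{Y_{\Q}}$ arising from the fact that $b$ is an isomorphism over $Y$), identifies $u_+\cO_{\bbY'}|_{\cU}$ with $\cO_{\bbY}|_{\cU}$.

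The main obstacle will be to cleanly translate the geometric statement ``$b$ is an isomorphism over $Y$'' into the desired identifications of $\sD^{\dagger}$-modules on the open complements $\cU'$ and $\cU$, in particular to track the roles of $Z$ and $Z'$ carefully in the frame-theoretic formalism. Once these local identifications are in place, the reduction via Lemma~\ref{LemFond} is essentially formal, and both statements follow.
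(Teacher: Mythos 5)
The paper's proof is shorter and cleaner than yours at two places, and your proposal contains two genuine gaps.

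First, for part (ii) the paper does not construct a comparison map at all: it simply observes that, because $u$ is $c$-affine with first component the identity, \cite[1.2.8]{AbeCaro} implies $u^{!}$ and $u_{+}$ are \emph{quasi-inverse equivalences} between $\mathrm{Ovhol}(\bbY)$ and $\mathrm{Ovhol}(\bbY')$, so (ii) is an immediate formal consequence of (i). Your route --- adjunction to produce a map $u_{+}\cO_{\bbY'}\to\cO_{\bbY}$, then check it on an open via Lemma~\ref{LemFond} --- is not wrong in spirit, but it throws away the key structural fact available here and replaces a one-line argument with a devissage that you have not actually carried out. Moreover, the tool you invoke to finish, Proposition~\ref{compat_proj}, is not the right one: that proposition compares algebraic direct images on $P_{\bbQ}$ with arithmetic direct images on $\cP$ after scalar extension $\osD_{P_\bbQ}\to\sD^\dagger_\cP$; it is not a base-change statement about restricting $\hg_{+}$ to the open $\cU\subset\cP$. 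What you would actually need there is commutation of the pushforward with open restriction, which is a different (though true) statement.

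Second, for part (i) the paper's proof is a direct calculation that unwinds the definitions of $u^{!}$ and $\cO_{\bbY}$ and then invokes Caro's theorem \cite[Th\'eor\`eme 2.2.18]{CaroSurcohFL} to commute $\hg^{!}$ past the local cohomology functors $R\Gamma^\dagger_{X_s}(^\dagger Z_s)$, followed by the idempotence $R\Gamma_{\bbY'}\circ R\Gamma_{\bbY'}=R\Gamma_{\bbY'}$. Your strategy --- produce a natural map $u^{!}\cO_{\bbY}\to\cO_{\bbY'}$ and test it on $\cU'=\frX'\setminus Z'_s$ via Lemma~\ref{LemFond} --- is a legitimate alternative in principle, but the construction of the comparison map is only gestured at. The phrase ``functoriality of $R\Gamma$ and the composition law $u^{!}\circ v^{!}=Q^{!}$'' does not by itself produce the arrow; you would still need to commute $\hg^{!}$ past $R\Gamma^\dagger_{\bbY}$, which is precisely the input from \cite{CaroSurcohFL} that the paper uses directly. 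So the apparent gain of ``avoiding the heavy commutation theorem by reducing to opens'' is illusory: you need it (or something equivalent) anyway to write down the map whose invertibility you then want to check. Once that is fixed, the local verification you describe (everything is the constant module on $\cU'$, with the shifts matching because $\dim X'=\dim Y$) is fine, and Lemma~\ref{LemFond} does finish the job; it is just that by then the paper's direct computation is already complete.
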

\begin{proof} 
By ~\cite[1.2.8]{AbeCaro}, $u^!$ and $u_+$ are exact functors of the categories Ovhol($\bbY$) and
Ovhol($\bbY'$), and quasi-inverse, so that (ii) is a direct consequence of (i). Recall also that, by 
~\cite[2.2.6.1,2.2.8,2.2.14]{CaroSurcohFL}, $R\Gamma_{\bbY'}\circ R\Gamma_{\bbY'}=R\Gamma_{\bbY'}$. 
We compute 
\begin{align*}
u^!\cO_{\bbY} &=R\Gamma_{\bbY'} \circ \hg^! (R\Gamma_{X_s}^{\dagger}(^\dagger Z_s)(\cO_{\cP,\Q})[d])\\
               &\simeq R\Gamma_{\bbY'}\circ R\Gamma_{X'_s}^{\dagger}\circ(^\dagger Z'_s)\hg^!\cO_{\cP,\Q}[d] \textrm{ \;
\cite[Théorème 2.2.18]{CaroSurcohFL}}\\
             &\simeq R\Gamma_{\bbY'}\circ R\Gamma_{\bbY'} \cO_{\frX',\Q} \\
               &\simeq R\Gamma_{\bbY'} \cO_{\frX',\Q}.
\end{align*}
\end{proof}

We come to the main result, which describes the relation between the classical intermediate extension 
$v_{\bbQ!+}\cO_{Y_{\bbQ}}$ on the generic fibre and the Abe-Caro intermediate extension $v_{!+}\cO_{\bbY}$ on the special fibre. 

\begin{thm}\label{mainappli} There is a canonical isomorphism 
$$  \sD^{\dagger}_{\cP}\ot_{\osD_{P_{\bbQ}}}\overline{v_{\bbQ!+}(\cO_{Y_{\bbQ}})}\simeq v_{!+}(\cO_{\bbY}).$$
\end{thm}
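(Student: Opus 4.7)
The plan is to reduce the statement to the compatibility diagram in Corollary \ref{thetacomp} via the auxiliary resolution $u: \bbY' \to \bbY$, and then to commute taking images with the (exact) base change functor.

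First, I would rewrite the intermediate extension on the special fibre in terms of $Q = v\circ u$ rather than $v$. By Lemma \ref{uplus}, $u_+\cO_{\bbY'} \simeq \cO_{\bbY}$, and since $u_! = u_+$ is $t$-exact (as $u$ is c-affine with first component the identity), the composition identities \eqref{compatib} give canonical isomorphisms
\[
v_!\cO_{\bbY} \simeq v_!u_!\cO_{\bbY'} = Q_!\cO_{\bbY'}, \qquad v_+\cO_{\bbY} \simeq v_+u_+\cO_{\bbY'} = Q_+\cO_{\bbY'},
\]
under which $\theta_v$ is identified with $\theta_Q$. Since $Q$ is a c-affine immersion, $Q_!$ and $Q_+$ are $t$-exact, and therefore
\[
v_{!+}(\cO_{\bbY}) \;=\; \operatorname{Im}\bigl(\theta_v : v_!\cO_{\bbY} \to v_+\cO_{\bbY}\bigr) \;\simeq\; \operatorname{Im}\bigl(\theta_Q : Q_!\cO_{\bbY'}\to Q_+\cO_{\bbY'}\bigr).
\]

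Next, I would invoke Corollary \ref{thetacomp}, which identifies the vertical morphism $\theta_Q$ (up to the canonical isomorphisms) with
\[
\sD^{\dagger}_{\cP}\otimes_{\osD_{P_{\bbQ}}} \overline{\theta_{v_{\bbQ}}} \;:\; \sD^{\dagger}_{\cP}\otimes_{\osD_{P_{\bbQ}}}\overline{v_{\bbQ !}\cO_{Y_{\bbQ}}} \;\longrightarrow\; \sD^{\dagger}_{\cP}\otimes_{\osD_{P_{\bbQ}}}\overline{v_{\bbQ +}\cO_{Y_{\bbQ}}}.
\]
So the task reduces to commuting the formation of the image with the functor $\sD^{\dagger}_{\cP}\otimes_{\osD_{P_{\bbQ}}}\overline{(-)}$.

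This last step is where one must be careful, but it is exactly what the preparatory results provide: the functor $\cE \mapsto \overline{\cE}$ on quasi-coherent $\cO_{P_{\bbQ}}$-modules is exact, and $\sD^{\dagger}_{\cP}$ is flat over $\osD_{P_{\bbQ}}$ by Lemma \ref{platitude}. Hence $\sD^{\dagger}_{\cP}\otimes_{\osD_{P_{\bbQ}}}\overline{(-)}$ is exact on (complexes of) quasi-coherent $\sD_{P_{\bbQ}}$-modules and commutes with images. Applying it to the defining short exact sequence
\[
0 \longrightarrow \ker(\theta_{v_{\bbQ}}) \longrightarrow v_{\bbQ !}\cO_{Y_{\bbQ}} \longrightarrow v_{\bbQ !+}\cO_{Y_{\bbQ}} \longrightarrow 0
\]
(and the analogous extraction of the image in $v_{\bbQ +}\cO_{Y_{\bbQ}}$) yields
\[
\sD^{\dagger}_{\cP}\otimes_{\osD_{P_{\bbQ}}}\overline{v_{\bbQ !+}\cO_{Y_{\bbQ}}} \;\simeq\; \operatorname{Im}\bigl(\sD^{\dagger}_{\cP}\otimes_{\osD_{P_{\bbQ}}}\overline{\theta_{v_{\bbQ}}}\bigr) \;\simeq\; \operatorname{Im}(\theta_Q) \;\simeq\; v_{!+}(\cO_{\bbY}),
\]
the middle isomorphism being Corollary \ref{thetacomp}.

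The main obstacle is the second step: ensuring that the rewriting $v_{!+}(\cO_{\bbY}) = \operatorname{Im}(\theta_Q)$ is legitimate, i.e., that one may replace the immersion $v$ by the c-affine immersion $Q$ without changing the image. This relies crucially on $u_+ = u_!$ being a $t$-exact equivalence identifying $\cO_{\bbY'}$ with $\cO_{\bbY}$ (Lemma \ref{uplus}) and on the functoriality of $\theta$ in \eqref{compatib}; the whole strategy of introducing the resolution $b: X' \to X$ with a normal crossings complement $Z'$ is precisely what allows the compatibility result \ref{thetacomp0} (and hence \ref{thetacomp}) to apply in the first place.
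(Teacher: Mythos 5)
Your argument is correct and follows essentially the same route as the paper's proof: replace $v$ by the c-affine immersion $Q = v\circ u$ via $u_+ = u_!$ and Lemma~\ref{uplus} to identify $\theta_v$ with $\theta_Q$, invoke Corollary~\ref{thetacomp} to identify $\theta_Q$ with the scalar extension of $\theta_{v_\bbQ}$, and use exactness/flatness of $\sD^{\dagger}_{\cP}\otimes_{\osD_{P_{\bbQ}}}\overline{(\,\cdot\,)}$ to pass to images. Your write-up is if anything slightly more explicit than the paper's about the final flatness step that lets one commute image formation with the base-change functor.
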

\begin{proof}  Again, by \cite[1.2.8]{AbeCaro}, $u_+=u_!$, and $\theta_Q=\theta_v\circ u_+$. By previous
lemma~\ref{uplus}, $u_+\cO_{\bbY'}\simeq\cO_{\bbY}$ and we have a commutative diagram 
$$ \xymatrix{ v_+\cO_{\bbY} \ar@{->}[d]^{\simeq}\ar@{->}[r]^{\theta_{v}}& v_!\cO_{\bbY} \ar@{->}[d]^{\simeq}\\
     Q_+\cO_{\bbY'}\ar@{->}[r]^{\theta_{Q}} & Q_!\cO_{\bbY'}
.}$$
Now we have 
\begin{align*} v_{!+}(\cO_{\bbY}) &= \im(\theta_{v}) \\ 
                                  &\simeq \im(\theta_{Q}) \\
                      &\simeq \sD^{\dagger}_{\cP}\ot_{\osD_{P_{\bbQ}}}\im(\theta_{v}^{alg})
\textrm{ by ~\ref{thetacomp}},\\
                              & \simeq \sD^{\dagger}_{\cP}\ot_{\osD_{P_{\bbQ}}}\overline{v_{\bbQ!+}(\cO_{Y_{\bbQ}})}.
\end{align*}
\end{proof}

\section{Localization theory on the flag variety}\label{sec_localizationflagvariety}

We specialize the above theory to the case where $\cP$ is the (formal) flag variety of a connected split reductive group $G$ over $\fro$. 
Such a space is coherently $\sD^{\dagger}$-affine and its algebra of global differential operators $H^0(\cP, \sD_{\cP}^{\dagger})$ identifies with (a central reduction of) the crystalline distribution algebra of $G$. Truly in the spirit of classical localization theory \cite{BB81}, this allows us to analyze geometrically the module theory of the distribution algebra.

\subsection{Crystalline distribution algebras}
Let $G$ be a connected split reductive group scheme over $ \fro$.
Let $I$ be the kernel of the morphism $\fro$-algebras $\varep_G: \fro[G]\rig \fro$ which represents $1\in G$.
Then $I/I^2$ is a free $\fro=\fro[G]/I$-module of finite rank. Let $t_1,\ldots,t_N\in I$ whose classes
modulo $I^2$ form a base of $I/I^2$. The $m$-PD-envelope of $I$ is denoted by $P_{(m)}(G)$. This algebra is a free $\fro$-module
with basis $$\ut^{\{\uk\}}=t_1^{\{k_1\}}\cdots t_N^{\{k_N\}},$$
where $q_i!t_i^{\{k_i\}}=t_i^{k_i}$ with $i=p^mq_i+r$ et $r<p^m$ \cite[1.5]{BerthelotDI}.
The algebra $P_{(m)}(G)$ has a descending filtration by the ideals

\begin{gather} \label{PDfilt}
 I^{\{n\}}=\bigoplus_{|\uk|\geq n} \fro \cdot \ut^{\{\uk\}}.
\end{gather}
The quotients $P^n_{(m)}(G):= P_{(m)}(G)/I^{\{n+1\}}$ are generated, as $\fro$-module, by the elements $\ut^{\{k\}}$ where
$|\uk|\leq n$ and there is an isomorphism $P^n_{(m)}(G)\simeq \bigoplus_{|\uk|\leq n} \fro \ut^{\{\uk\}}$ as $\fro$-modules.
There are canonical surjections $pr^{n+1,n}: P^{n+1}_{(m)}(G)\trig P^n_{(m)}(G)$.

\vskip5pt 

We note $$\Lie(G):=\Hom_\fro(I/I^2,\fro).$$ The Lie-algebra $\Lie(G)$ is a free $\fro$-module with
basis $\xi_1,\ldots,\xi_N$ dual to $t_1,\ldots,t_N.$ For $m'\geq m$, the universal property of divided power algebras gives
homomorphismes of filtered algebras
$\psi_{m,m'}\,\colon \, P_{(m')}(G)\rig P_{(m)}(G)$
which induce on quotients homomorphismes of algebras
$\psi_{m,m'}^n \,\colon \,P^n_{(m')}(G)\rig P^n_{(m)}(G).$
 {\it The module of distributions of level $m$ and order $n$} is
$D_n^{(m)}(G):=\Hom_{\fro}(P^n_{(m)}(G),\fro)$
{\it The algebra of distributions of level $m$} is defined to be
$$D^{(m)}(G):=\varinjlim_n D_n^{(m)}(G)$$ where the limit is taken with respect to the maps $\Hom_{\fro}(pr^{n+1,n},\fro)$.

\vskip5pt

For $m'\geq m$, the algebra homomorphisms $\psi_{m,m'}^n$ give dually linear maps $\Phi_{m,m'}^n$ : $D^{(m)}_n(G)\rig D^{(m')}_n(G)$ and finally a morphism of filtered algebras
$\Phi_{m,m'}: D^{(m)}(G)\rig D^{(m')}(G).$ The direct limit 

$${\rm Dist}(G)=\varinjlim_m D^{(m)}(G)$$

equals the classical distribution algebra of the group scheme $G$ \cite[II.\S 4.6.1]{DemazureGabriel}.

\vskip5pt

Let now $\GG$ be the completion of $G$ along its special fibre.
 We write $G_i=\Spec\; \fro [G]/\pi^{i+1}$. The morphism $G_{i+1}\hrig G_i$ induces
$D^{(m)}(G_{i+1})\rightarrow D^{(m)}(G_i)$. We put
$$\widehat{D}^{(m)}(\GG):=\varprojlim_{i}D^{(m)}(G_i).$$ If $m'\geq m$, one has the morphisms $\hat{\Phi}_{m,m'}: \widehat{D}^{(m)}(\GG)\rig
\widehat{D}^{(m')}(\GG)$ and the {\it crystalline distribution algebra} is defined to be 
$$
D^{\dagger}(\GG):= \varinjlim_{m}\widehat{D}^{(m)}(\GG) \otimes\Q.$$

Note, as for differential operators, that this dagger-algebra appears with coefficients tensored by $\Q$. 
For more details on the basic theory of the algebra $D^{\dagger}(\GG)$ we refer to \cite{HS1,HS2}. 

\vskip5pt

For a character $\theta: Z(\frg)\rightarrow K$ of the center $Z(\frg)$ of the universal enveloping algebra of the $K$-Lie algebra
$\frg=\Lie(G)\otimes\Q,$ we will always denote by
 $$D^{\dagger}(\GG)_{\theta}:= D^{\dagger}(\GG)_{\theta}\otimes_{Z(\frg),\theta}K $$ the corresponding central reduction of $D^{\dagger}(\GG)$. 
 The {\it trivial} character is the character $\theta_0$ with $\ker \theta_0 = Z(\frg) \cap (U(\frg)\frg)$.

\vskip5pt

\subsection{The localization theorem and holonomicity}

 Let in the following $\theta=\theta_0$ be the trivial character. 
Our goal is to analyze the {\it central block} of the category of $D^{\dagger}(\GG)$-modules, i.e. the category of
$D^{\dagger}(\GG)_{\theta_0}$-modules. We keep the notation from the preceding section. 

\vskip5pt 

We let $B\subset G$ be a Borel subgroup containing a maximal split torus $T$, with unipotent radical $N$. Denote by
$$P:=G/B$$ the flag scheme. It is a smooth and projective scheme over $\fro$. We denote by $\cP$ its formal completion. The $G$-action on $P$ by translations endowes $\cP$ with a $\cG$-action. We recall the localization theorem for arithmetic $\sD$-modules on the flag variety. 

\begin{thm} \label{localizationthm}
{\rm (a)} The global section functor induces an equivalence of categories between coherent $\sD^{\dagger}_{\cP}$-modules and coherent $H^0(\cP,\sD^{\dagger}_{\cP})$-modules. A quasi-inverse is given by the functor 
$${\mathscr Loc}(M)= \sD^{\dagger}_{\cP}\otimes_{H^0(\cP,\sD^{\dagger}_{\cP})}M.$$

\vskip5pt

{\rm (b)} The $\cG$-action on $\cP$ induces an algebra isomorphism
$$D^{\dagger}(\GG)_{\theta_0}\car H^0(\cP, \sD^{\dagger}_{\cP}).$$

\end{thm}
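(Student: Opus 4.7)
The plan is to reduce both statements, via the characteristic-zero twist, to statements about the finite-level sheaves $\hsD^{(m)}_{\cP,\Q}$ and then pass to the direct limit in $m$.

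For part (a), the standard pattern of Beilinson--Bernstein has to be installed one level at a time. First I would establish, for each $m\geq 0$, the cohomological vanishing
$H^i(\cP,\hsD^{(m)}_{\cP,\Q})=0$ for all $i\geq 1$, together with generation by global sections for every coherent $\hsD^{(m)}_{\cP,\Q}$-module. Both assertions are reduced by the natural order filtration on $\hsD^{(m)}_{\cP,\Q}$ to the corresponding statements about the graded pieces, which are built out of symmetric powers of the (divided power, level $m$) tangent sheaf $\cT_{\cP}^{(m)}$. Because $P=G/B$ is the flag variety, $\cT_P$ is generated by global sections (it is a quotient of $\frg\otimes\cO_P$), hence sufficiently ample for Serre-type vanishing after twisting by a large power of $p^m$ and inverting $p$; combining with a Koszul-type resolution one obtains the vanishing and generation at finite level. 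Passing to the inductive limit $m\to\infty$ is exact on coherent $D^{\dagger}(\GG)_{\theta_0}$-modules and on coherent $\sD^{\dagger}_{\cP}$-modules (coherence in the weakly complete/dagger setting controls the limits), which gives both the acyclicity of $\sD^{\dagger}_{\cP}$ for $\Gamma(\cP,-)$ and the fact that every coherent $\sD^{\dagger}_{\cP}$-module is generated by its global sections. From these two facts the equivalence of categories with quasi-inverse ${\mathscr Loc}(M)=\sD^{\dagger}_{\cP}\otimes_{H^0(\cP,\sD^{\dagger}_{\cP})}M$ is formal: the unit and counit of the adjunction $({\mathscr Loc},\Gamma(\cP,-))$ are then isomorphisms on coherent objects.

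For part (b), the $\cG$-action on $\cP$ provides an algebra homomorphism
$$\Phi: D^{\dagger}(\GG)\longrightarrow H^0(\cP,\sD^{\dagger}_{\cP})$$
by differentiating the action, using the compatibility of distributions with the action of $\cG$ on the $p$-adic jets of $\cO_{\cP}$ at each level $m$; this gives the map at finite level $\hD^{(m)}(\GG)\to H^0(\cP,\hsD^{(m)}_{\cP,\Q})$ compatibly in $m$. One then checks that central elements $z-\theta_0(z)$ (for $z\in Z(\frg)$) are in the kernel by reducing, after inverting $p$ and taking generic fibre, to the classical Beilinson--Bernstein identification $U(\frg)/(U(\frg)\cdot\ker\theta_0) \car H^0(P_K,\sD_{P_K})$, which is the Harish-Chandra description of global algebraic differential operators on the flag variety. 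So $\Phi$ factors through $D^{\dagger}(\GG)_{\theta_0}$. To establish bijectivity, I would argue at each level: the classical identification, coupled with the already-established cohomological vanishing from part (a), shows that the induced map at level $m$ becomes an isomorphism after inverting $p$, and taking the limit in $m$ yields the desired isomorphism $D^{\dagger}(\GG)_{\theta_0}\car H^0(\cP,\sD^{\dagger}_{\cP})$.

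The technical heart of the argument — and the step I expect to be the main obstacle — is the finite-level cohomological vanishing, because the level $m$ PD-tangent sheaf is not literally the classical tangent sheaf; one must control how the divided power structure interacts with the ampleness properties on $P$, and in particular show that after tensoring with $\Q$ the Frobenius descent phenomena at level $m$ do not spoil vanishing. Once this vanishing is secured uniformly in $m$ with constants controlled so that the colimit is exact on coherents, both statements of the theorem fall out of standard adjunction arguments combined with the classical Harish-Chandra isomorphism on the generic fibre.
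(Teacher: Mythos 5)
The paper does not prove this theorem itself; its ``proof'' is a one-line citation to \cite{HS2} and \cite{NootHuyghe09}, so there is no in-paper argument to compare your sketch against directly. Your outline does capture the high-level strategy of those references for part (a): prove, at each finite level $m$, the cohomological vanishing $H^i(\cP,\hsD^{(m)}_{\cP,\Q})=0$ for $i\geq 1$ together with generation by global sections, and then pass to the inductive limit over $m$; and for part (b), produce the map by differentiating the $\cG$-action level by level and identify the kernel and image via the classical Beilinson--Bernstein picture on the generic fibre.

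However, the sketch has genuine gaps at exactly the places where the real work is done. You correctly flag the finite-level cohomological vanishing as the crux, but the route you propose for it --- reducing by the order filtration to ``symmetric powers of the level-$m$ PD tangent sheaf'' and then invoking Serre-type vanishing ``after twisting by a large power of $p^m$'' plus a Koszul resolution --- is not the right picture and would not, as stated, yield the result. The associated graded of $\hsD^{(m)}_{\cP}$ is a $p$-adically completed divided-power symmetric algebra, and the $p$-adic completion is precisely what ruins a naive appeal to ampleness and Serre vanishing; the argument in \cite{NootHuyghe09} requires controlled \v{C}ech computations and explicit norm estimates to pass vanishing and generation through the completion, not a one-shot Koszul/Serre argument. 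Similarly, for part (b), your claim that ``the map at level $m$ becomes an isomorphism after inverting $p$'' by reduction to the classical Harish-Chandra identification on the generic fibre is not automatic: the generic-fibre statement concerns the uncompleted algebra $U(\frg)_{\theta_0}$ and sheaf $\sD_{P_K}$, and one must separately establish that $\widehat{D}^{(m)}(\GG)\otimes\Q \to H^0(\cP,\hsD^{(m)}_{\cP,\Q})$ is an isomorphism, which is a comparison of completions rather than a corollary of the algebraic statement. So the architecture you lay out is consistent with the cited works, but the technical heart remains unproved and several intermediate moves are incorrect as written.
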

\begin{proof} This summarizes the main results of \cite{HS2} and \cite{NootHuyghe09}. 
\end{proof}

Remark: Sarrazola-Alzate has extended the above theorem to the case of an arbitrary central character $\theta$ using a twisted version of the sheaf 
$\sD^{\dagger}_{\cP}$, as in the classical setting, cf. \cite{SA1}.

\vskip5pt 

We specialize the classification result \ref{classify} to the case of the flag variety $\cP$.
First of all, $\cP$ is quasi-projective (in fact projective) over $\fro$. 
According to the main result of \cite{CaroStab} the notions {\it overholonomic} and {\it holonomic}
coincide for $F$-$\sD^{\dagger}_{\cP}$-modules. Hence we have the equality  

$$F\text{-Ovhol}(\bbP/K)= \{ \text{ holonomic $F$-}\sD^{\dagger}_{\cP}\text{-modules }\}$$

inside the category of coherent $\sD^{\dagger}_{\cP}$-modules. This motivates the following definition. 

\begin{dfn} A $D^{\dagger}(\GG)_{\theta_0}$-module $M$ is called 
{\it geometrically $F$-holonomic} if ${\mathscr Loc}(M)\in F\text{-Ovhol}(\bbP/K)$.
\end{dfn}

\vskip5pt

Recall from \ref{subsec_ovholext} the set of equivalence classes of pairs $(Y,\cE)$ where $Y\subset \cP_s$ 
is a connected smooth locally closed subvariety and $\cE$ is an irreducible overconvergent $F$-isocrystal on $\bbY=(Y,X)$. We put $\cL(Y,\cE):= v_{!+}(\cE)\in F\text{-Ovhol}(\bbP)$ where $v: \bbY\rightarrow\bbP$ is the immersion of couples associated with $Y$.

\begin{thm}\label{classRep}
 The correspondence $(Y,\cE)\mapsto H^0(\cP,\cL(Y,\cE))$ induces a bijection 
$$\{\text{equivalence classes of pairs $(Y,\cE)$}\}\car \{\text{irreducible $F$-holonomic $D^{\dagger}(\GG)_{\theta_0}$-modules} \}/ {\simeq} $$
 
\end{thm}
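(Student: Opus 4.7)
The plan is to deduce the theorem by combining Theorem~\ref{classification}, which classifies irreducible overholonomic $F$-$\sD^{\dagger}_{\cP}$-modules, with the localization equivalence of Theorem~\ref{localizationthm}, transporting the classification to the module side.

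First, I note that since $\cP$ is projective over $\fro$ (being the flag variety), the result of Caro \cite{CaroStab} cited in the excerpt gives the equality
$$F\text{-Ovhol}(\bbP/K)= \{ \text{holonomic $F$-}\sD^{\dagger}_{\cP}\text{-modules}\},$$
so by the definition of geometrically $F$-holonomic $D^{\dagger}(\GG)_{\theta_0}$-modules, the localization functor ${\mathscr Loc}$ restricts to an equivalence between the full subcategory of $F$-holonomic $D^{\dagger}(\GG)_{\theta_0}$-modules and the category $F\text{-Ovhol}(\bbP/K)$, with quasi-inverse $H^0(\cP,-)$.

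Next, I observe that any equivalence of abelian categories preserves and reflects irreducibility. Hence $H^0(\cP,-)$ induces a bijection between isomorphism classes of irreducible objects in $F\text{-Ovhol}(\bbP/K)$ and isomorphism classes of irreducible $F$-holonomic $D^{\dagger}(\GG)_{\theta_0}$-modules. Combining this with Theorem~\ref{classification}, the composed map
$$(Y,\cE) \longmapsto \cL(Y,\cE) \longmapsto H^0(\cP,\cL(Y,\cE))$$
gives the desired bijection: injectivity is the combination of the injectivity of $(Y,\cE)\mapsto \cL(Y,\cE)$ and the fact that $H^0(\cP,-)$ reflects isomorphisms on coherent $\sD^{\dagger}_{\cP}$-modules, while surjectivity follows because any irreducible $F$-holonomic $D^{\dagger}(\GG)_{\theta_0}$-module $M$ has ${\mathscr Loc}(M)$ an irreducible object in $F\text{-Ovhol}(\bbP/K)$, hence of the form $\cL(Y,\cE)$ for some pair.

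There is no real obstacle here; the statement is essentially a formal consequence of the two equivalences. The only small point worth checking is that Theorem~\ref{localizationthm} is indeed compatible with the $F$-structure (i.e.\ that passing via ${\mathscr Loc}$ and $H^0(\cP,-)$ matches up the notion of an irreducible $F$-holonomic module on either side), but this is guaranteed by the very definition of geometrically $F$-holonomic adopted just before the theorem.
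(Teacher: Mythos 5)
Your proof is correct and takes exactly the same route as the paper, whose own proof is simply the one-line citation of Theorem~\ref{classification} combined with Theorem~\ref{localizationthm}; you have merely written out the standard bookkeeping (that an equivalence of abelian categories preserves and reflects irreducibility, and that the Frobenius-holonomic condition on the module side was defined precisely so that ${\mathscr Loc}$ and $H^0(\cP,-)$ match the two sides).
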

\begin{proof} 
This follows from the classification theorem \ref{classification} together with \ref{localizationthm}.
\end{proof}

We point out a related interesting property of the category of holonomic $F$-$\sD^{\dagger}_{\cP}$-modules.

\vskip5pt

It is conjectured by de Jong that, if $X$ is a connected smooth projective variety over an algebraically closed field of characteristic $p>0$ 
with trivial \'etale fundamental group, then any isocrystal on $X$ is constant. 
This conjecture is proved under certain additional assumptions by Esnault-Shiho in \cite{EsSh}. 
In our case, the fibration $G\rightarrow G/B=P$ is a separable proper morphism with geometrically connected fibre between locally noetherian connected schemes.
To compute the fundamental group of $\cP_s$, we may pass to a simply connected cover of the semisimple derived group of $\cG_s$. The homotopy exact sequence \cite[Exp. 10 Cor. 1.4]{SGA1} implies then that \'etale fundamental group of $\cP_s$ is trivial. Here is a short representation-theoretic proof of de Jong's conjecture for the flag variety $\cP_s$.\footnote{The homotopy exact sequence implies in the same manner 
that the generic fibre $P_K$ has trivial \'etale fundamental group. By Chern-Weil theory and Grothendieck's theorem on formal functions, the de Rham Chern classes on $P_K$ become trivial after tensoring with $\bbQ$. But these classes correspond to the rational crystalline classes on $\cP_s$ via the comparison theorem between de Rham and crystalline cohomology, from which one may deduce the conjecture. 
We thank H. Esnault for explaining this general argument to us. }

\begin{prop} \label{deJong} Any convergent isocrystal on $\cP_s$ is constant. 
\end{prop}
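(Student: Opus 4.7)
The plan is to combine Berthelot's equivalence between convergent isocrystals and $\cO_{\cP,\Q}$-coherent $\sD^\dagger_\cP$-modules with the arithmetic Beilinson--Bernstein localization theorem \ref{localizationthm} and elementary finite-dimensional representation theory of $\frg$. Any convergent isocrystal on $\cP_s$ corresponds, via \cite[4.1.4]{BerthelotDI} recalled in the proof of \ref{constirred}, to a coherent $\sD^\dagger_\cP$-module $\cE$ which is moreover coherent as an $\cO_{\cP,\Q}$-module. Since $\cP$ is smooth and proper over $\fro$, the global sections $M := H^0(\cP, \cE)$ form a finite-dimensional $K$-vector space, and by Theorem \ref{localizationthm} one has $\cE \simeq {\mathscr Loc}(M)$ for this finite-dimensional $D^{\dagger}(\GG)_{\theta_0}$-module $M$.

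Next, I would restrict the action along the canonical map $U(\frg) \to D^{\dagger}(\GG)$, turning $M$ into a finite-dimensional $\frg$-module on which $Z(\frg)$ acts through $\theta_0$. Since $\theta_0$ restricts to the augmentation on the central factor $U(\frz) \subset Z(\frg)$, the center $\frz$ of $\frg$ acts by zero on $M$, so $M$ is a finite-dimensional module over the semisimple Lie algebra $[\frg,\frg]$ with trivial infinitesimal character. By Weyl's complete reducibility theorem $M$ is semisimple, and among finite-dimensional simple $\frg$-modules only the trivial one has infinitesimal character $\theta_0$: indeed, a dominant integral highest weight $\lambda$ with $\chi_\lambda = \chi_0$ must satisfy $\lambda + \rho \in W\cdot \rho$, and since $\lambda + \rho$ is dominant and $\rho$ is strictly dominant this forces $\lambda + \rho = \rho$, i.e.\ $\lambda = 0$. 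Hence $M \simeq K^n$ as a $\frg$-module, and the full $D^{\dagger}(\GG)_{\theta_0}$-action is forced to factor through the augmentation $D^{\dagger}(\GG)_{\theta_0} \twoheadrightarrow K$.

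Applying ${\mathscr Loc}$ and using the identification ${\mathscr Loc}(K) \simeq \cO_{\cP,\Q}$, we obtain $\cE \simeq \cO_{\cP,\Q}^n$, which is the constant convergent isocrystal of rank $n$ on $\cP_s$. The one small point to check is this last identification ${\mathscr Loc}(K) \simeq \cO_{\cP,\Q}$: it follows from the localization equivalence of Theorem \ref{localizationthm} once one observes that $H^0(\cP, \cO_{\cP,\Q}) = K$ (since $\cP$ is smooth, proper, and geometrically connected over $\fro$) and that the $\GG$-action on $\cO_\cP$, being induced by the action by translations on $\cP$, fixes constants and hence produces the trivial $\frg$-action on this one-dimensional space. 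This mild unwinding of the $\GG$-action on $\cO_\cP$ is the only obstacle I foresee in making the argument completely rigorous; all other steps are formal consequences of \ref{localizationthm} and classical representation theory.
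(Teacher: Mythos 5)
Your proposal is correct and follows essentially the same route as the paper's own proof: view the isocrystal as an $\cO_{\cP,\Q}$-coherent $\sD^\dagger_\cP$-module via \cite[4.1.4]{BerthelotDI}, observe that $H^0(\cP,\cE)$ is a finite-dimensional $\frg$-representation with central character $\theta_0$ and hence a direct sum of copies of the trivial representation, and conclude by applying $\mathscr{Loc}$. The extra details you supply (the dominance argument forcing $\lambda=0$, the step from triviality of the $\frg$-action to triviality of the full $D^\dagger(\GG)_{\theta_0}$-action, and the check that $\mathscr{Loc}(K)\simeq\cO_{\cP,\Q}$) are welcome clarifications of points the paper leaves implicit, not departures from its argument.
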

\begin{proof}
Any convergent isocrystal $\cE$ may be viewed as a coherent $\sD^{\dagger}_{\cP}$-module which is coherent over $\cO_{\cP,\bbQ}$
\cite[Prop. (4.1.4)]{BerthelotDI}. Then $H^0(\cP,\cE)$ is a finite dimensional representation of the reductive $K$-Lie algebra $\frg$ and 
hence completely reducible (semisimple). In addition, it has central character $\theta_0$. But the trivial one dimensional representation is the only irreducible $\frg$-representation 
of finite dimension and with central character $\theta_0$. Since the trivial representation localizes to the trivial connection $\cO_{\cP,\bbQ}$ and since localization commutes 
with direct sums, the isocrystal $\cE$ must be constant.
\end{proof}

\section{Highest weight representations and the rank one case}\label{secfinal}

We keep the notation from the preceding section. We assume from now on that the field $K$ is locally compact. 

\subsection{Highest weight representations}

We establish a crystalline version of the central block of the classical BGG category $\cO$ and show
that its objects are geometrically $F$-holonomic. We then compute their associated parameters $(Y,\cE)$ in the geometric classification \ref{classRep}. 

\vskip5pt

Let $\Delta$ be the set of
simple roots in $\Phi^{+}$. We fix a (Chevalley) basis for $\Lie(G)$ compatible with its root space decomposition. In particular, we obtain a 
$\fro$-basis $t_1,...,t_n$ of $\Lie(T)$ which is made up from a $K$-basis of the center of $\frg$ and 
finitely many elements $t_{\alpha}$, indexed by $\alpha\in \Delta$, such that $\beta(t_{\alpha})\in \bbZ$ for all $\beta\in\Phi$. 
Let $\Gamma:= \bbZ_{\geq
0}\Phi^+\subset \bbQ \Phi=:\Lambda_r\subseteq\Lambda$
where $\Lambda_r$ and $\Lambda$ are the root lattice and the integral weight lattice respectively. 

\vskip5pt

For $w\in W$ we let $\lambda_w = -w(\rho) -\rho$. These are $|W|$ pairwise different elements of $\Lambda_r$.

\vskip5pt

Let $\cO_0$ be the central block of the classical BGG category, e.g. \cite{H1}. This is a full abelian subcategory 
of finitely generated $U(\frg)_{\theta_0}$-modules which is noetherian and artinian. Its irreducible objects are given by the unique irreducible quotients $M(\lambda_w)\rightarrow L(\lambda_w)$ where $$M(\lambda_w):=U(\frg)\otimes_{U({\frt}),\lambda_w} K$$
is the Verma module with highest weight $\lambda_w$ for $w\in W$.

\vskip5pt

To define a crystalline variant of the category $\cO_0$ we follow the constructions given in \cite{SchmidtBGG} in the case of the Arens-Michael envelope of $U(\frg)$. In order to do so, we need the field $K$ to be locally compact.  

\vskip5pt

By the discussion in \cite[5.3]{HS1} the algebra $D^{\dagger}(\GG)=\varinjlim_{m}\widehat{D}^{(m)}(\GG) \otimes\Q$ is an inductive limit of Hausdorff locally convex $K$-vector spaces with injective and compact transition maps. According to \cite[7.19/16.9/16.10]{NFA} it is therefore Hausdorff, complete and barrelled. 

\vskip5pt 

The framework of diagonalisable modules over suitable commutative topological $K$-algebras as described in \cite[sec. 2]{SchmidtBGG} applies therefore to the $K$-algebra $D^{\dagger}(\TT)$. Note that it contains the universal enveloping algebra 
$U(\frt)$ as a dense subalgebra. A $K$-{\it valued weight} $\lambda$ of $D^{\dagger}(\TT)$ is a 
$K$-algebra homomorphism $D^{\dagger}(\TT)\rightarrow K$. A set of weights
$Y$ is called {\it relatively compact} if its image under the injective map
$\lambda\mapsto
(\lambda(t_1),...,\lambda(t_n))$ has a compact closure 
in $K^n$. Let $\lambda$ be weight and $M$ some topological $D^{\dagger}(\TT)$-module.
A nonzero $m\in M$
is called a {\it$\lambda$-weight vector} if $h.m=\lambda(h).m$ for
all $h\in D^{\dagger}(\TT)$. In this case $\lambda$ is called a {\it weight of
$M$}. The closure $M_\lambda$ in $M$ of the $K$-vector space
generated by all $\lambda$-weight vectors is called the
$\lambda$-{\it weight space} of $M$. The module $M$ is called
$D^{\dagger}(\TT)$-{\it diagonalisable} if there is a set of weights
$\Pi(M)$ with the property: to every $m\in M$ there
exists a family $\{m_\lambda\in M_\lambda\}_{\lambda\in\Pi(M)}$
converging cofinitely against zero in $M$ and satisfying
$$m=\sum_{\lambda\in\Pi(M)}m_\lambda.$$
Given a diagonalisable module $M$ we may form $M^{ss}=\oplus_{\lambda\in\Pi(M)}M_\lambda$
(depending on the choice of $\Pi(M)$).

\vskip5pt 

\begin{dfn}\label{def} The category $\cO_0^{\dagger}$ equals the
full subcategory of $D^{\dagger}(\GG)_{\theta_0}$-modules $M$ satisfying:
\begin{itemize}
 \item[(1)] $M$ is a coherent $D^{\dagger}(\GG)_{\theta_0}$-module
    \item[(2)] $M$ is $D^{\dagger}(\TT)$-diagonalisable with $\Pi(M)$ contained in the union of the cosets $\lambda_w-\Gamma $
    \item[(3)] All weight spaces $M_\lambda, \lambda\in\Pi(M)$, are finite
    dimensional over $K$.
\end{itemize}
\end{dfn}

By definition, given $M\in\cO_0^{\dagger}$, then
any finitely generated $U(\frg)$-submodule of $M^{ss}$ lies in $\cO_0$. In particular, $M^{ss}$ contains a {\it maximal vector}, i.e. a nonzero 
$m\in M_{\lambda}$ (of some weight $\lambda$) such that $\frn.m=0$. We will make precise the relation between 
the two categories $\cO_0$ and $\cO_0^{\dagger}$ below. 

\vskip5pt 

We list some basic properties of the category $\cO_0^{\dagger}$.

\begin{prop}\label{propO}
\begin{itemize}
    \item[(i)] The direct sum of two modules of $\cO_0^{\dagger}$ is in $\cO_0^{\dagger}$
    \item [(ii)] the (co)kernel and (co)image of an arbitrary $D^{\dagger}(\GG)_{\theta_0}$-linear
    map between objects in $\cO_0^{\dagger}$ is in $\cO_0^{\dagger}$
    \item[(iii)] the sum of two coherent submodules of an object in $\cO_0^{\dagger}$ is in $\cO_0^{\dagger}$
    \item[(iv)] any finitely generated submodule of an object in
    $\cO_0^{\dagger}$ is in $\cO_0^{\dagger}$
    \item[(v)] $\cO_0^{\dagger}$ is an abelian category.

     \end{itemize}
\end{prop}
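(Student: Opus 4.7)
The strategy is to verify that each of the three defining conditions for $\cO_0^\dagger$---coherence over $D^\dagger(\GG)_{\theta_0}$, $D^\dagger(\TT)$-diagonalisability with weight set contained in $\bigcup_{w\in W}(\lambda_w-\Gamma)$, and finite-dimensionality of weight spaces---is preserved under the operations in (i)--(v). Much of the argument should parallel the treatment of the analogous category over the Arens--Michael envelope in \cite{SchmidtBGG}.

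Item (i) is essentially formal: coherent modules are closed under finite direct sums, one has $(M\oplus N)_\lambda=M_\lambda\oplus N_\lambda$ which stays finite-dimensional, and $\Pi(M\oplus N)=\Pi(M)\cup\Pi(N)$ remains inside $\bigcup_w(\lambda_w-\Gamma)$.

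The main work lies in (ii). The first key step is to show that any $D^\dagger(\GG)_{\theta_0}$-linear map $f:M\to N$ automatically respects the weight decompositions, i.e.\ $f(M_\lambda)\subseteq N_\lambda$. Since $f$ is $U(\frg)$-linear, it is in particular $U(\frt)$-linear, so it sends weight vectors of weight $\lambda$ to $U(\frt)$-eigenvectors with the same eigenvalue $\lambda$. Decomposing $f(m_\lambda)=\sum_\mu n_\mu$ with $n_\mu\in N_\mu$ and using that finitely many elements of $U(\frt)$ suffice to separate any finite set of weights in the discrete locus $\bigcup_w(\lambda_w-\Gamma)$ (Vandermonde-style eigenvector separation), combined with the density of $U(\frt)$ in $D^\dagger(\TT)$, one forces $n_\mu=0$ for $\mu\ne\lambda$ and hence $f(m_\lambda)\in N_\lambda$. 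The second key step is that coherence of $\ker f$, $\im f$, and $\coker f$ follows from coherence of the algebra $D^\dagger(\GG)_{\theta_0}$ established in \cite{HS1}. The induced weight decompositions on $\im f$ and $\coker f$ are then immediate from the first step; for $\ker f$ one writes $m=\sum_\lambda m_\lambda\in\ker f$, applies $f$ continuously (using automatic continuity of maps between coherent modules), and invokes uniqueness of the weight decomposition in $N$ to conclude that each $f(m_\lambda)=0$, so that each $m_\lambda\in\ker f$. Finite-dimensionality of weight spaces in the subquotients is then automatic.

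Items (iii)--(v) reduce to the previous steps: for (iii), the sum $M_1+M_2$ is the image of $M_1\oplus M_2\to M$, so one invokes (i) and (ii); for (iv), a finitely generated submodule of a coherent module over the coherent ring $D^\dagger(\GG)_{\theta_0}$ is itself coherent, and the diagonalisability and weight-space conditions are inherited from the ambient module; for (v) one combines (i) and (ii) with the trivial zero object. The main obstacle is the topological step in (ii) ensuring that a convergent cofinite sum $m=\sum_\lambda m_\lambda$ lying in $\ker f$ truly decomposes with each summand in $\ker f$; this rests on automatic continuity of module maps together with uniqueness of the weight decomposition in $N$, itself a consequence of the discreteness of the allowed weight locus and the finite-dimensionality (hence closedness) of each $N_\mu$.
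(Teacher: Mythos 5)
The paper's own proof is essentially a pointer to \cite[Prop.~3.6.3]{SchmidtBGG}, together with a single verification: that any admissible weight set $\Pi(M)\subseteq\bigcup_w(\lambda_w-\Gamma)$ is \emph{relatively compact} in $K^n$, since $\Gamma=\bbZ_{\geq0}\Phi^+$ has closure inside the compact set $\bbZ_p^{|\Delta|}$. This relative compactness is the hypothesis under which the diagonalisability framework of \cite[sec.~2]{SchmidtBGG} produces uniqueness of the cofinite weight decomposition, continuity of the $D^\dagger(\TT)$-action on weight spaces, and hence the abelian-category properties you are after. Your proposal is in the same spirit — check that the three defining conditions are preserved, adapting the Arens--Michael argument — but it misstates the crucial topological point.

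Concretely: you invoke ``discreteness of the allowed weight locus'' (twice) as the reason uniqueness of the weight decomposition holds and as the basis for the Vandermonde separation step. In the $p$-adic topology this is simply wrong, and in fact opposite to what happens. The weights in $\lambda_w-\Gamma$ are integral, and integers form a \emph{bounded, non-discrete} subset of $K$ (e.g.\ $p^k\to 0$), so the image of the weight locus in $K^n$ is precompact, not discrete. It is precisely this precompactness that the framework of \cite{SchmidtBGG} requires, and it is the only nontrivial thing the paper's proof adds to the citation. Replacing discreteness by relative compactness is not a cosmetic change: uniqueness of a convergent cofinite expansion $m=\sum_\lambda m_\lambda$, and the resulting fact that a $D^\dagger(\GG)_{\theta_0}$-linear map preserves weight spaces, comes out of the compactness argument in \cite[Prop.~2.0.2, Lem.~3.6.1]{SchmidtBGG}, not out of any separation/discreteness in the weight set. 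The rest of your sketch (coherence of $\ker$, $\coker$ over the coherent ring $D^\dagger(\GG)_{\theta_0}$, reduction of (iii)--(v) to (i)--(ii)) is fine and agrees with what the cited proof does.
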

\begin{proof} This can be proved using a variant of the proof of \cite[Prop. 3.6.3]{SchmidtBGG}. Note that any $\Pi(M)$ which is contained in the union of the cosets $\lambda_w-\Gamma$ is relatively compact. Indeed, $\Gamma$ is relatively compact its closure 
being contained in the compact subset $\bbZ_p^{|\Delta|}$ of $K^n$, cf. \cite[Lem. 3.6.1]{SchmidtBGG}. 
\end{proof}


We exhibit Verma type modules in $\cO_0^{\dagger}$. The main difference between the case of the crystalline distribution algebra and the case 
of the Arens-Michael envelope treated in \cite{SchmidtBGG} is that {\it not} every weight $\frt\rightarrow K$ extends to a weight of $D^{\dagger}(\TT)$. The following lemma is sufficient for our purposes. 

\begin{lemma} 
Any linear form $\lambda: \Lie(T)\rightarrow \fro$ such that $\lambda(h_i)\in\bbZ_p$ for all $i=1,...,n$ extends canonically to a 
$K$-algebra homomorphism $D^{\dagger}(\TT) \rightarrow K$.
\end{lemma}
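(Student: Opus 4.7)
The plan is to reduce to the rank-one case and then construct the extension via Emerton's analytic comparison theorem, using the integrality of $p$-adic binomial coefficients as input.

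Since $T$ is a split torus of rank $n$, fix an isomorphism $T\simeq\bbG_m^n$ under which the basis $h_1,\ldots,h_n$ of $\Lie(T)$ appearing in the statement corresponds to the canonical invariant vector fields $t_i\partial_{t_i}|_{t_i=1}$ attached to the multiplicative coordinates. Since $T$ is abelian, $D^\dagger(\TT)$ decomposes as a completed tensor product over the $n$ rank-one factors $D^\dagger(\widehat{\bbG_m})$. It therefore suffices to produce, for each $x\in\bbZ_p$, a canonical $K$-algebra character $\lambda_x: D^\dagger(\widehat{\bbG_m})\to K$ sending the canonical generator $h$ of $\Lie(\widehat{\bbG_m})$ to $x$. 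Assembling the $\lambda_{\lambda(h_i)}$ across the $n$ factors then yields the required extension of $\lambda$ to $D^\dagger(\TT)\to K$, and canonicity follows from the rigidity of locally analytic characters of compact abelian $p$-adic Lie groups.

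For the rank-one construction, I use the canonical isomorphism from \cite{HS1} identifying $D^\dagger(\widehat{\bbG_m})$ with Emerton's locally analytic distribution algebra $D^{\rm an}(\widehat{\bbG_m}^\circ)$, where $\widehat{\bbG_m}^\circ=\{t\in K^\times:|t-1|<1\}$ is the rigid analytic open unit disc viewed as a compact abelian $p$-adic Lie group under multiplication. Under this identification, continuous $K$-algebra characters correspond bijectively to locally analytic group characters $\widehat{\bbG_m}^\circ\to K^\times$. For $x\in\bbZ_p$ the integrality $\binom{x}{k}\in\bbZ_p$ implies that
\begin{equation*}
\chi_x(t) := t^x = \sum_{k\geq 0}\binom{x}{k}(t-1)^k
\end{equation*}
converges on the whole open unit disc and defines a locally analytic character. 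Differentiating $\chi_x$ at the identity produces a Lie algebra character sending $h$ to $x$, which provides the desired $\lambda_x$.

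The main obstacle is the careful invocation of Emerton's comparison theorem and the verification that the tensor product construction is independent of the chosen splitting of $T$. A purely algebraic alternative would operate directly on the divided power basis $h^{\{k\}}$ of $\widehat{D}^{(m)}(\widehat{\bbG_m})$, normalized by $q!\cdot h^{\{k\}} = h^k$ for $k = p^m q + r$ with $0\leq r<p^m$: one sets $\lambda_x(h^{\{k\}}) := x^k/q!$ and must verify continuous extension to $\widehat{D}^{(m)}(\widehat{\bbG_m})\otimes\bbQ$ together with compatibility with the transition maps between different levels $m$. This again reduces to the integrality of binomial coefficients on $\bbZ_p$, once the divided powers $h^{\{k\}}$ are re-expressed in the integer-valued polynomial basis $\binom{h}{k}$, on which evaluation at $x\in\bbZ_p$ is manifestly $p$-integral.
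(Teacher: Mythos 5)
Your proposal actually contains two arguments, and it is the \emph{alternative} at the end, not the main one, that coincides with the paper's route. The paper works integrally: it recalls that $\mathrm{Dist}(\bbG_m)$ has $\fro$-module basis $\binom{\delta_1}{k}$, uses the Chevalley splitting $T\simeq\prod_i\bbG_m$ to reduce to the product case, observes that $\binom{\lambda(h_i)}{k}\in\bbZ_p$ forces the algebra character $\lambda:U(\frt_K)\to K$ to restrict to an $\fro$-algebra homomorphism $\mathrm{Dist}(T)\to\fro$, and then completes level by level (via $\mathrm{Dist}(T)=\varinjlim_m D^{(m)}(T)\to\widehat D^{(m)}(\TT)\to D^\dagger(\TT)$) to get the desired extension. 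Your ``purely algebraic alternative,'' re-expressing divided powers in the integer-valued binomial basis $\binom{h}{k}$ and evaluating at $x\in\bbZ_p$, is this same argument, though you should beware that the elements $h^{\{k\}}$ with $q!\,h^{\{k\}}=h^k$ live naturally in $D^{(m)}$ (not in $P_{(m)}$, which is where the paper's $\ut^{\{\uk\}}$ sit), so a small care with the duality between $P^n_{(m)}$ and $D^{(m)}_n$ is needed to make this precise.

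Your main argument via Emerton's comparison $D^\dagger(\widehat{\bbG_m})\simeq D^{\mathrm{an}}(\widehat{\bbG_m}^\circ)$ is a genuinely different route and is also sound: for $x\in\bbZ_p$ the binomial series $\chi_x(t)=\sum_k\binom{x}{k}(t-1)^k$ does converge on the whole rigid open unit disc around $1$ because $\binom{x}{k}\in\bbZ_p$, giving a rigid-analytic character $\widehat{\bbG_m}^\circ\to\bbG_m^{\mathrm{rig}}$ whose derivative at $1$ is $x$; dualizing yields the desired $K$-algebra character of $D^{\mathrm{an}}$. This buys a conceptually pleasant interpretation (the character is literally ``evaluate at $t\mapsto t^x$'') at the cost of importing more machinery: the completed tensor product decomposition $D^\dagger(\TT)\simeq\widehat\bigotimes_i D^\dagger(\widehat{\bbG_m})$, the identification of continuous $K$-algebra characters of $D^{\mathrm{an}}(G^\circ)$ with rigid-analytic group characters, and the isomorphism with Emerton's algebra from \cite{HS1} — none of which the paper's direct integral argument needs. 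Also, the canonicity (independence of the chosen splitting of $T$) is best argued not by ``rigidity of locally analytic characters'' but simply by density of $U(\frt_K)$ in $D^\dagger(\TT)$, which forces any continuous algebra character to be determined by its restriction to $\frt$.
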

\begin{proof}
Recall that the distribution algebra
${\rm Dist}(\bbG_m)$ of the $\fro$-group scheme $\bbG_m$ is generated as an $\fro$-module by the elements $\binom{\delta_1}{k}$ for $k\in\bbN$
where $\delta_1$ is a generator of $\Lie(\bbG_m)$, cf. \cite[Part I.7.8]{Jantzen}. Our choice of Chevalley basis implies an isomorphism of group schemes
$T\simeq \prod_{i=1,...,n} \bbG_m$ such that the basis element $h_i$ becomes the generator of the $i$-th copy $\Lie(\bbG_m)$.
Since $\binom{\lambda(h_i)}{k}\in\bbZ_p$, the associated $K$-algebra homomorphism $\lambda: U(\frt)\rightarrow K$ restricts to an $\fro$-algebra homomorphism ${\rm Dist}(T)\rightarrow\fro$. Since 
${\rm Dist}(T)=\varinjlim_m D^{(m)}(T)$, this extends then to a $K$-algebra homomorphism $D^{\dagger}(\TT) \rightarrow K$, 
\end{proof}

We may apply the lemma to any weight $\lambda_w$ and hence consider the $D^{\dagger}(\GG)$-module 

$$M^{\dagger}(\lambda_w):=D^{\dagger}(\GG) \otimes_{D^{\dagger}(\TT),\lambda_w} K.$$

\begin{prop}\label{verma} The module $M^{\dagger}(\lambda_w)$ lies in $\cO_0^{\dagger}$. We have 
$$M^{\dagger}(\lambda_w)^{ss}=M(\lambda_w)\hskip10pt \text{and} \hskip10pt M^{\dagger}(\lambda_w)=D^{\dagger}(\GG) \otimes_{U(\frg)} M(\lambda_w).$$

 There is a canonical inclusion
preserving bijection between subobjects of $M^{\dagger}(\lambda_w)$ and abstract
$U(\frg)$-submodules of $M(\lambda_w)$. In particular, $M^{\dagger}(\lambda_w)$ admits a unique maximal subobject and hence a unique irreducible quotient 
$L^{\dagger}(\lambda_w)$. The latter satisfies $L^{\dagger}(\lambda_w)^{ss}=L(\lambda_w)$. 

\end{prop}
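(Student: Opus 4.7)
The backbone of the argument is a \emph{crystalline PBW decomposition}
$$D^{\dagger}(\bar{\NN}) \otimes_K D^{\dagger}(\TT) \otimes_K D^{\dagger}(\NN) \xrightarrow{\simeq} D^{\dagger}(\GG),$$
obtained by starting from the open immersion $\bar N \times T \times N \hookrightarrow G$, passing to distributions of level $m$ at each finite level, $p$-adically completing, and taking the limit over $m$. This identifies $D^{\dagger}(\GG)$ as a free right $D^{\dagger}(\NN) \otimes_K D^{\dagger}(\TT)$-module over $D^{\dagger}(\bar{\NN})$, and is compatible with the classical PBW identification $U(\bar\frn) \otimes U(\frt) \otimes U(\frn) \xrightarrow{\simeq} U(\frg)$. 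In particular, $D^{\dagger}(\GG)$ becomes a flat right $U(\frg)$-module.

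From this, the identification $M^{\dagger}(\lambda_w) \simeq D^{\dagger}(\GG) \otimes_{U(\frg)} M(\lambda_w)$ is immediate: applying the crystalline PBW decomposition on one side and the classical PBW decomposition of $M(\lambda_w)$ on the other, both sides are identified with $D^{\dagger}(\bar{\NN})$ as left $D^{\dagger}(\bar{\NN})$-modules, each arising by killing the ``Borel part'' through the character $\lambda_w$. In particular, one obtains a canonical $U(\frg)$-linear embedding $M(\lambda_w) \hookrightarrow M^{\dagger}(\lambda_w)$ whose image consists of weight vectors with weights in $\lambda_w - \Gamma$ and finite-dimensional weight spaces. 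Arguing as in the proof of \cite[Prop.\,3.6.3]{SchmidtBGG}, one deduces that $M^{\dagger}(\lambda_w)$ is coherent, $D^{\dagger}(\TT)$-diagonalisable with $\Pi(M^{\dagger}(\lambda_w)) \subset \lambda_w - \Gamma$, has finite-dimensional weight spaces, and has semisimple part $M^{\dagger}(\lambda_w)^{ss} = M(\lambda_w)$.

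The heart of the proposition is the inclusion-preserving bijection between subobjects of $M^{\dagger}(\lambda_w)$ and $U(\frg)$-submodules of $M(\lambda_w)$. In one direction, given $N \subset M^{\dagger}(\lambda_w)$ in $\cO_0^{\dagger}$, the semisimple part $N^{ss}$ embeds into $M^{\dagger}(\lambda_w)^{ss} = M(\lambda_w)$ and is stable under the dense subalgebra $U(\frg) \subset D^{\dagger}(\GG)$, hence is a $U(\frg)$-submodule. In the reverse direction, an abstract $U(\frg)$-submodule $N' \subset M(\lambda_w)$ is sent to the coherent subobject $D^{\dagger}(\GG) \cdot N' \subset M^{\dagger}(\lambda_w)$, which lies in $\cO_0^{\dagger}$ by Proposition~\ref{propO}. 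The key lemma is that these assignments are mutually inverse; concretely, that any coherent $N \in \cO_0^{\dagger}$ with $N \subset M^{\dagger}(\lambda_w)$ coincides with $D^{\dagger}(\GG) \cdot N^{ss}$. Using the identification from (c) and flatness of $D^{\dagger}(\GG)$ over $U(\frg)$, this reduces to recovering $N$ as the $D^{\dagger}(\GG)$-submodule generated by its weight vectors, which one obtains by combining $D^{\dagger}(\TT)$-diagonalisability, finite-dimensionality of weight spaces, and coherence.

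The final assertions follow formally: the unique maximal proper $U(\frg)$-submodule of $M(\lambda_w)$ corresponds under the bijection to a unique maximal proper subobject of $M^{\dagger}(\lambda_w)$; its quotient $L^{\dagger}(\lambda_w)$ is therefore the unique irreducible quotient, and the exactness of $(-)^{ss}$ on $\cO_0^{\dagger}$ yields $L^{\dagger}(\lambda_w)^{ss} = L(\lambda_w)$. I expect the main obstacle to lie in establishing the mutual inverse property in the bijection: controlling the interplay between coherence over the noncommutative topological ring $D^{\dagger}(\GG)$ and the weight decomposition inherited from the dense subalgebra $U(\frg)$ requires the crystalline PBW flatness together with the finite-dimensionality of weight spaces, and is the technical core of the proof.
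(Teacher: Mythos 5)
Your proof is correct and takes essentially the same route as the paper: both hinge on the triangular (crystalline PBW) decomposition $D^{(m)}(G)=D^{(m)}(N^{-})\otimes_\fro D^{(m)}(T)\otimes_\fro D^{(m)}(N)$ to identify $M^{\dagger}(\lambda_w)\simeq D^{\dagger}(\NN^{-})$ as a left $D^{\dagger}(\NN^{-})$-module, from which $M^{\dagger}(\lambda_w)^{ss}=M(\lambda_w)$ and $M^{\dagger}(\lambda_w)=D^{\dagger}(\GG)\otimes_{U(\frg)}M(\lambda_w)$ follow, and both deduce $L^{\dagger}(\lambda_w)^{ss}=L(\lambda_w)$ from exactness and faithfulness of $(-)^{ss}$. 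The paper's own proof is much terser, outsourcing the subobject bijection to \cite[Prop.~3.7.1]{SchmidtBGG}, which you spell out; your only slight imprecision is presenting flatness of $U(\frg)\to D^{\dagger}(\GG)$ as a consequence of the PBW decomposition, whereas the paper invokes it as a separate lemma \cite[Lem.~4.1]{HS2}.
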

\begin{proof}
This can be proved as in \cite[Prop. 3.7.1]{SchmidtBGG}. Note that the triangular decomposition
$D^{(m)}(G)=D^{(m)}(N^{-})\otimes_\fro D^{(m)}(T)\otimes_\fro D^{(m)}(N)$, cf. \cite[2.2]{HS2},
implies that $M^{\dagger}(w)\simeq D^{\dagger}(\NN^{-})$ as a left $D^{\dagger}(\NN^{-})$-module. 
This implies the first displayed identity. 
Moreover, $M^{\dagger}(\lambda_w)$ equals the quotient of $D^{\dagger}(\GG)$ by the left ideal generated by $\ker(\lambda_w)$,
which implies the second displayed identity. Note also that the nonzero quotient morphism $M^{\dagger}(\lambda_w)\rightarrow L^{\dagger}(\lambda_w)$
yields a nonzero quotient morphism $M^{\dagger}(\lambda_w)^{ss}\rightarrow L^{\dagger}(\lambda_w)^{ss}$ since $(-)^{ss}$ is faithful and exact 
\cite[Prop. 2.0.2]{SchmidtBGG}. Hence $M^{\dagger}(\lambda_w)^{ss}=M(\lambda_w)$ implies $L^{\dagger}(\lambda_w)^{ss}=L(\lambda_w)$.
\end{proof}

\begin{cor} The modules $L^{\dagger}(\lambda_w)$ exhaust, up to isomorphism, all the irreducible objects in $\cO_0^{\dagger}$. 
\end{cor}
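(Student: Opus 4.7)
The plan is to take an arbitrary irreducible $M\in\cO_0^{\dagger}$ and exhibit it as the irreducible quotient of some $M^{\dagger}(\lambda_w)$, thereby forcing $M\simeq L^{\dagger}(\lambda_w)$ by the uniqueness in Proposition \ref{verma}. The two main inputs are (a) the existence of a maximal vector in $M^{ss}$ whose weight is necessarily some $\lambda_w$, and (b) the universal property of $M^{\dagger}(\lambda_w)$ that turns a $U(\frg)$-linear map into a $D^{\dagger}(\GG)$-linear map.

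First I would produce a maximal vector. By definition of $\cO_0^{\dagger}$ the set $\Pi(M)$ is contained in $\bigcup_{w\in W}(\lambda_w-\Gamma)$, and each weight space is finite dimensional. Pick any $\mu_0\in\Pi(M)$; the set of weights $\lambda\in\Pi(M)$ with $\lambda\geq\mu_0$ in the partial order defined by $\Gamma$ is contained in the finite set $\bigcup_w ((\lambda_w-\Gamma)\cap(\mu_0+\Gamma))$, so it has a maximal element $\lambda$. Any nonzero $m\in M_\lambda$ is then a maximal vector: for each positive root $\alpha$ the element $e_\alpha m$ is either zero or lies in $M_{\lambda+\alpha}$, but $\lambda+\alpha\notin\Pi(M)$ by maximality, hence $e_\alpha m=0$.

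Next I identify the weight $\lambda$. The $U(\frg)$-submodule $U(\frg).m\subset M^{ss}$ is a nonzero quotient of the classical Verma module $M(\lambda)$, so it has infinitesimal character $\theta_\lambda$. On the other hand it is a $U(\frg)$-submodule of the $D^{\dagger}(\GG)_{\theta_0}$-module $M$, hence $\theta_\lambda=\theta_0$. By Harish-Chandra's theorem, $\lambda+\rho$ lies in the Weyl orbit of $-\rho$, so $\lambda=\lambda_w$ for some $w\in W$.

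Finally I globalise. The $U(\frg)$-linear surjection $M(\lambda_w)\twoheadrightarrow U(\frg).m$ composed with the inclusion $M^{ss}\hookrightarrow M$ gives a nonzero $U(\frg)$-linear map $M(\lambda_w)\to M$. By Proposition \ref{verma}, $M^{\dagger}(\lambda_w)\simeq D^{\dagger}(\GG)\otimes_{U(\frg)}M(\lambda_w)$, so adjunction promotes this to a nonzero $D^{\dagger}(\GG)$-linear map $M^{\dagger}(\lambda_w)\to M$. Since $M$ is irreducible in $\cO_0^{\dagger}$ the image is all of $M$, so $M$ is a nonzero quotient of $M^{\dagger}(\lambda_w)$; by Proposition \ref{verma} any such irreducible quotient is isomorphic to $L^{\dagger}(\lambda_w)$. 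The main technical point is ensuring step one: that the upper-boundedness of $\Pi(M)$ inside a finite union of $\Gamma$-cosets combined with finite dimensionality of weight spaces suffices to extract a genuine maximal vector. All other steps are formal consequences of the results already established.
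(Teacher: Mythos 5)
Your argument is correct and follows the paper's own proof: find a maximal vector in $M^{ss}$, identify its weight as some $\lambda_w$ via the central character, and promote the resulting $U(\frg)$-linear map to a surjection $M^{\dagger}(\lambda_w)\to M$. You merely make explicit two points the paper leaves implicit, namely the finiteness argument guaranteeing that a maximal vector exists (the paper records this immediately after Definition \ref{def}) and the adjunction producing the $D^{\dagger}(\GG)$-linear map from the $U(\frg)$-linear one via $M^{\dagger}(\lambda_w)\simeq D^{\dagger}(\GG)\otimes_{U(\frg)}M(\lambda_w)$.
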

\begin{proof} Let $L$ be an irreducible object in $\cO_0^{\dagger}$. Take a maximal vector $m\in L^{ss}$ of some weight $\lambda$.
 Then $U(\frg)m$ is a highest weight module in $\cO$ of weight $\lambda$, cf. \cite[1.2]{H1}. Hence $Z(\frg)$ acts on the maximal vector $m$ via the central character $\theta_{\lambda}$ associated to $\lambda$ via the Harish-Chandra homomorphism \cite[1.7]{H1}. But $U(\frg)m\subset L$ whence $\theta_{\lambda}=\theta_0$ and so $\lambda=\lambda_w$ for some $w\in W$.
 We obtain a nonzero $D^{\dagger}(\GG)$-linear map $M^{\dagger}(\lambda_w)\rightarrow L, 1\otimes 1\mapsto m$. 
 So $L$ is an irreducible quotient of $M^{\dagger}(\lambda_w)$, i.e. $L\simeq L^{\dagger}(\lambda_w)$.
\end{proof}

\begin{cor} The category $\cO_0^{\dagger}$ is artinian and noetherian. 
\end{cor}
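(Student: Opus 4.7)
The plan is to show that every $M\in\cO_0^{\dagger}$ has finite length, which then yields both ACC and DCC at once.

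The argument rests on three ingredients. First, the ring $D^{\dagger}(\GG)_{\theta_0}$ is (left) noetherian, e.g.\ via theorem \ref{localizationthm} together with the noetherianity of $\Coh(\sD^{\dagger}_{\cP})$ on the quasi-compact space $\cP$ (cf.\ \cite{HS1,HS2}). Combined with proposition \ref{propO}(iv), this gives ACC for subobjects in $\cO_0^{\dagger}$. Second, each Verma module $M^{\dagger}(\lambda_w)$ has finite length in $\cO_0^{\dagger}$: by proposition \ref{verma} its subobjects are in inclusion-preserving bijection with $U(\frg)$-submodules of the classical Verma module $M(\lambda_w)$, and $M(\lambda_w)$ is of finite length in $\cO_0$ by standard BGG theory.

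The third and main ingredient is that any nonzero $M\in\cO_0^{\dagger}$ contains a nonzero highest weight submodule. Indeed, $\Pi(M)$ is a nonempty subset of $\bigcup_{w\in W}(\lambda_w-\Gamma)$; each coset $\lambda_w-\Gamma$ is bounded above by $\lambda_w$ under the partial order induced by $\Gamma=\bbZ_{\geq 0}\Phi^+$, and there are only finitely many cosets, so $\Pi(M)$ admits maximal elements. Fix such a maximal $\mu$ and a nonzero $v\in M_\mu$. For each positive root $\alpha$ and root vector $x_\alpha\in\frg_\alpha$, one has $x_\alpha.v\in M_{\mu+\alpha}$; since $\mu+\alpha\notin\Pi(M)$ the ambient weight space $M_{\mu+\alpha}$ vanishes, forcing $x_\alpha.v=0$, so $v$ is a maximal vector. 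The central character of $M$ being $\theta_0$ then forces $\theta_\mu=\theta_0$, whence $\mu=\lambda_w$ for some $w\in W$, and $D^{\dagger}(\GG)_{\theta_0}\cdot v$ is a nonzero quotient of $M^{\dagger}(\lambda_w)$, hence of finite length by the second ingredient.

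To conclude, I would iterate as follows. Starting from $N_0:=0$, having constructed $N_i\subsetneq M$ in $\cO_0^{\dagger}$ with $M/N_i\neq 0$, apply the third ingredient to $M/N_i$ (which lies in $\cO_0^{\dagger}$ by proposition \ref{propO}(ii)) to obtain a nonzero highest weight submodule, which pulls back to $N_{i+1}\supsetneq N_i$ with $N_{i+1}/N_i$ of finite length. The resulting strictly ascending chain $N_0\subsetneq N_1\subsetneq\cdots\subset M$ must stabilize by ACC, and by construction stabilization at step $k$ is only possible if $M=N_k$. Hence $M$ admits a finite filtration with finite length successive quotients, so $M$ itself has finite length, giving both noetherianity and artinianity. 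The main obstacle is the existence-of-maximal-vector step: one must argue that the purely topological weight-space decomposition of $M$ still produces an honest $\frn$-invariant vector. This works because $\frn$ acts by elements of pure root-space weight raising weights by positive roots, so maximality of $\mu$ forces the entire ambient weight spaces $M_{\mu+\alpha}$ to vanish, not merely the formal components of $v$ therein.
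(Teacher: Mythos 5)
The overall strategy---show that every object of $\cO_0^{\dagger}$ has finite length via maximal vectors and highest weight submodules---is the right one and matches the spirit of the paper's proof, and your second and third ingredients are sound. However, your first ingredient contains a genuine gap: you assert that $D^{\dagger}(\GG)_{\theta_0}$ is left noetherian, citing the localization theorem and ``the noetherianity of $\Coh(\sD^{\dagger}_{\cP})$''. Neither half of this is available. The sheaf $\sD^{\dagger}_{\cP}$ (and likewise $D^{\dagger}(\GG)$) is a filtered \emph{colimit} of $p$-adically completed rings of level $m$ differential operators; Berthelot's theory yields that it is a \emph{coherent} ring, and coherence of a ring does not give the ascending chain condition on submodules of a coherent module. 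There is no theorem asserting that $\Coh(\sD^{\dagger}_{\cP})$ is a noetherian abelian category, and theorem \ref{localizationthm} only transports the equivalence of coherent module categories; it does not upgrade coherence to noetherianity. Consequently the termination of your chain $N_0\subsetneq N_1\subsetneq\cdots$ is not justified by ACC.

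The paper sidesteps this entirely with a quantitative bound. Since $\Pi(M)$ is contained in a finite union of cosets $\lambda_w-\Gamma$ and each weight space is finite-dimensional, the space $V:=\sum_{w\in W}M_{\lambda_w}$ is a finite-dimensional $K$-vector space. For any strict inclusion $N'\subsetneq N$ of subobjects, the quotient $N/N'$ is a nonzero object of $\cO_0^{\dagger}$, hence admits a maximal vector of some weight, which by the central character argument must be some $\lambda_w$; lifting a weight vector of weight $\lambda_w$ to $N\setminus N'$ produces an element of $(N\cap V)\setminus(N'\cap V)$, so $\dim_K(N\cap V)>\dim_K(N'\cap V)$. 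Any chain of subobjects of $M$ therefore has length at most $\dim_K V<\infty$, giving finite length without any noetherianity hypothesis on the ring. Your own iteration can be rescued the same way: each passage from $N_i$ to $N_{i+1}$ introduces a new $\lambda_w$-weight vector not in $N_i$, so $\dim_K(N_i\cap V)$ strictly increases and the chain has length at most $\dim_K V$. With that replacement the proof becomes correct and essentially coincides with the paper's.
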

\begin{proof}
This can be deduced similarly to \cite[Prop.4.2.2]{SchmidtBGG}.
In fact, let $M\in\cO_0^{\dagger}$ and consider the finite-dimensional $K$-vector space
$V:=\sum_{w} M_{\lambda_w}$. Suppose $N'\varsubsetneq
N\subseteq M$ are two subobjects. Let $m\in N\setminus N'$ be a maximal
vector of some weight $\lambda$. As in the preceding proof we deduce from the action of $Z(\frg)$ on $m$ 
that $\lambda=\lambda_w$ for some $w\in W$. So $m\in N\cap V$ whence $\dim_K N\cap V>\dim_K N'\cap V$.
This implies that $M$ has finite length.
\end{proof} 

Given a module $M\in\cO_0$, we can define the coherent $D^{\dagger}(\GG)_{\theta_0}$-module 

$$M^{\dagger} := D^{\dagger}(\GG) \otimes_{U(\frg)} M.$$

 \begin{thm}\label{ffembedding}
The functor $F: M\rightsquigarrow M^\dagger$ is exact and induces an equivalence of abelian categories
$$ \cO_0 \car \cO_0^{\dagger}.$$ A quasi-inverse is given by the functor $(-)^{ss}$.
\end{thm}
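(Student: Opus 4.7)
I would establish that $F$ and $(-)^{ss}$ are mutually inverse equivalences by exhibiting a natural adjunction $F\dashv (-)^{ss}$ and showing that both the unit and counit are isomorphisms; exactness of both functors is then automatic because they are equivalences of abelian categories.

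First, I check the functors have the claimed targets. For $M\in\cO_0$, a finite presentation $\bigoplus_j M(\mu_j)\to \bigoplus_i M(\lambda_i)\to M\to 0$ by Verma modules, together with the right exactness of $F$ and Proposition \ref{verma}, exhibits $M^\dagger$ as a quotient of $\bigoplus_i M^\dagger(\lambda_i)\in \cO_0^\dagger$, so $M^\dagger\in \cO_0^\dagger$ by Proposition \ref{propO}. For $(-)^{ss}$, every $N\in \cO_0^\dagger$ has finite length with composition factors $L^\dagger(\lambda_w)$, whose semisimplifications $L(\lambda_w)$ lie in $\cO_0$; exactness of $(-)^{ss}$ together with closure of $\cO_0$ under extensions gives $N^{ss}\in \cO_0$. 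The adjunction itself comes from base change along $U(\frg)\hookrightarrow D^\dagger(\GG)$, combined with the observation that any $U(\frg)$-linear map from a weight module with weights in $\bigcup_w(\lambda_w-\Gamma)$ into $N\in\cO_0^\dagger$ lands in $N^{ss}$, because the canonical extension of an integral weight from $U(\frt)$ to $D^\dagger(\TT)$ (cf.\ the lemma preceding Proposition \ref{verma}) is unique.

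To see that the unit $\eta_M\colon M\to (M^\dagger)^{ss}$ is an isomorphism, I observe that on Verma modules it is precisely the identification $M^\dagger(\lambda_w)^{ss}=M(\lambda_w)$ of Proposition \ref{verma}. For general $M$, the composite $(-)^{ss}\circ F$ is right exact, so the five-lemma applied to the Verma presentation of $M$ (with the two outer columns isomorphisms by the Verma case) forces $\eta_M$ to be an isomorphism. Hence $F$ is fully faithful. For the counit $\epsilon_N\colon (N^{ss})^\dagger\to N$, the triangle identity $(\epsilon_N)^{ss}\circ \eta_{N^{ss}}=\mathrm{id}_{N^{ss}}$ makes $(\epsilon_N)^{ss}$ an isomorphism. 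Since $(-)^{ss}$ is exact and faithful on $\cO_0^\dagger$---faithfulness because a nonzero $N\in\cO_0^\dagger$ has an irreducible subquotient $L^\dagger(\lambda_w)$, so that $N^{ss}$ contains the nonzero $L(\lambda_w)$ as a subquotient---it reflects isomorphisms, and $\epsilon_N$ is an isomorphism as well.

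The chief technical hurdle is the adjunction itself: one must verify that the $U(\frt)$-weight decomposition of a coherent $D^\dagger(\GG)_{\theta_0}$-module agrees with the $D^\dagger(\TT)$-weight decomposition on integral weights, so that $U(\frg)$-linear maps out of objects of $\cO_0$ really do factor through $N^{ss}$ and $1\otimes m$ genuinely lies in $(M^\dagger)^{ss}$. Once this compatibility is established, everything else is formal dévissage from Proposition \ref{verma}.
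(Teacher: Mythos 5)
Your skeleton (reduce everything to Proposition \ref{verma}, then d\'evissage) is the paper's, but one of your key steps fails as stated. There is in general \emph{no} finite presentation $\bigoplus_j M(\mu_j)\to\bigoplus_i M(\lambda_i)\to M\to 0$ of an object of $\cO_0$ by direct sums of Verma modules: any quotient of a direct sum of Vermas is generated by its maximal ($\frn$-invariant) weight vectors, and already for $\frg={\mathfrak s}{\mathfrak l}_2$ the big projective $P(-2\rho)$, sitting in $0\to M(0)\to P(-2\rho)\to M(-2\rho)\to 0$, has all of its maximal vectors inside the submodule $M(0)$ (a weight $0$ maximal vector $u_0$ gives $efu_0=hu_0=0$, while a weight $-2\rho$ vector lifting the generator of $M(-2\rho)$ cannot be killed by $e$, else $P(-2\rho)$ would split). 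So not even a surjection from a direct sum of Vermas exists, and both places where you use the presentation break down: the claim $M^\dagger\in\cO_0^\dagger$ and the five-lemma proof that the unit is an isomorphism. For the unit and counit the paper argues differently: it checks the maps on the irreducibles via \ref{verma} ($L^{\dagger}(\lambda_w)^{ss}=L(\lambda_w)$) and d\'evisses along composition series, using exactness of $F$ (flatness of $U(\frg)\to D^{\dagger}(\GG)$, \cite[Lem. 4.1]{HS2}) and of $(-)^{ss}$; if you repair your argument this way you can no longer defer exactness of $F$ to ``automatic once it is an equivalence'', since it is needed to prove the equivalence. (Be aware that the membership step $M^\dagger\in\cO_0^\dagger$ is genuinely delicate: the closure properties of \ref{propO} do not include stability under extensions, which is why a surjection from Vermas is tempting; the paper's own argument for this step invokes such a surjection and needs the same repair.)

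The second gap is the adjunction itself. Your counit argument is purely formal from the triangle identity, hence presupposes $\Hom_{D^{\dagger}(\GG)_{\theta_0}}(F(M),N)\simeq\Hom_{U(\frg)}(M,N^{ss})$, and the compatibility you yourself flag as the ``chief technical hurdle'' --- that a $U(\frt)$-weight vector of one of the relevant integral weights inside an object of $\cO_0^\dagger$ is automatically a $D^{\dagger}(\TT)$-weight vector for the canonical extension of the lemma preceding \ref{verma} --- is exactly the missing content; it requires density of $U(\frt)$ in $D^{\dagger}(\TT)$ together with continuity of the action on coherent modules, in the style of \cite{SchmidtBGG}, and is not proved in your text. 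The paper avoids constructing the full adjunction: it writes down the unit $M\to F(M)^{ss}$, $m\mapsto 1\otimes m$, and the counit $F(M^{ss})\to M$ directly, verifies both on irreducibles by \ref{verma}, and concludes by d\'evissage, with faithfulness/exactness of $(-)^{ss}$ quoted from \cite[Prop. 2.0.2]{SchmidtBGG}. With these two repairs --- composition-series d\'evissage in place of Verma presentations, and either proving the weight compatibility or working with the explicit unit/counit as the paper does --- the remaining formal parts of your argument (conservativity of $(-)^{ss}$, $N^{ss}\in\cO_0$ by induction on length, exactness a posteriori) are fine.
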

\begin{proof} The ring extension $U(\frg)\rightarrow D^{\dagger}(\GG)$ is flat \cite[Lem. 4.1]{HS2}.
We already now that $F(M(\lambda_w))=M^{\dagger}(\lambda_w)$.
Since any object $M\in\cO_0$ admits a finite composition series 
with irreducible constituents of the form $L(w)$, there is a surjection $\oplus_w M(\lambda_w) \rightarrow M$. 
Since $F$ commutes with direct sums, we see that $F(M)$ equals the quotient of  $\oplus_w M^{\dagger}(\lambda_w)$ modulo a finitely generated submodule and so lies in $\cO_0^{\dagger}$, according to parts (iii)-(v) of \ref{propO}. We therefore have an exact functor $F:\cO_0 \rightarrow \cO_0^{\dagger}.$
Given $M\in\cO_0^{\dagger}$ we have a functorial morphism $M\rightarrow F(M)^{ss}, m\mapsto 1\otimes m$ which is bijective for irreducible $M$ according to \ref{verma}. By d\'evissage, we obtain $M\simeq F(M)^{ss}$ in general. Let $M\in \cO_0^{\dagger}$. To obtain $M^{ss}\in\cO_0$ we use induction on the length of $M$ and suppose that $N\subset M$ is a maximal submodule, i.e. $M/N \simeq L^{\dagger}(\lambda_w)$ for some $w$, such that $N^{ss}\in\cO_0$.
Exactness of $(-)^{ss}$ and $L^{\dagger}(\lambda_w)^{ss}=L(\lambda_w)$ implies that $M^{ss}$ is an extension of two finitely generated $U(\frg)$-modules and hence itself 
finitely generated.  So $M^{ss}\in\cO_0$. We may now deduce that $(-)^{ss}$ is also a 
right quasi-inverse to $F$. Indeed, for any $M\in\cO_0^{\dagger} $, there is a natural morphism $F(M^{ss})\rightarrow M$ in $\cO_0^{\dagger}$ which is bijective for irreducible $M$ according to \ref{verma}. By d\'evissage, we obtain $F(M^{ss})\simeq M$ in general.
\end{proof}

To finish this section, we will show that the irreducible modules $L^{\dagger}(\lambda_w)$ are all geometrically $F$-holonomic. 

\vskip5pt 

To do this, fix $w\in W$ and let $$Y_w:=BwB/B \subset P=G/B$$ be the Bruhat cell in $P$ associated with $w\in W$ . Let  $v: Y_w\hookrightarrow P$ be the corresponding immersion over $\fro$ and let 
 $v_{\Q}: Y_{w\Q}\hookrightarrow P_{\Q}$ be the corresponding immersion on the level of $K$-algebraic varieties. It is well-known 
(e.g. \cite[Prop. 12.3.2]{Hotta}) that there is a canonical isomorphism of 
 $\sD_{P_{\bbQ}}$-modules  $${\rm Loc}(L(\lambda_w)):=\sD_{P_{\bbQ}}\otimes_{U(\frg)} L(\lambda_w) \simeq v_{\bbQ!+}(\cO_{Y_{w\bbQ}}).$$

Let  $X_w \subset P$ be the Zariski closure of the Bruhat cell $Y_w$ in $P$, a Schubert scheme. We let $$X'_w\longrightarrow X_w$$ be its {\it Demazure desingularization}, which is defined at the level of $\fro$-schemes ~\cite[II, 13.6]{Jantzen}. We are then in the axiomatic situation (S), the point of departure for subsection \ref{sec_compatibillity}, so that all the results of this subsection apply. In particular, we have the frame  $\bbY_w=(Y_{w,s},X_{w,s},\cP)$ together with its $c$-locally closed immersion $$v: \bbY_w\longrightarrow \bbP$$ and the constant overholonomic module $\cO_{\bbY_w}$ on $\bbY_w$.
Its intermediate extension $v_{!+}(\cO_{\bbY_w})$ is an irreducible holonomic  $F$-$\sD^{\dagger}_{\cP}$-module.
In this situation, the main theorem \ref{mainappli} implies directly the following result. 

\begin{prop}\label{app_flag} There is a canonical isomorphism of $\sD^{\dagger}_{\cP}$-modules   $$ \sD^{\dagger}_{\cP}\ot_{\osD_{P_{\bbQ}}}
\overline{v_{\bbQ!+}(\cO_{Y_{w\bbQ}})}\simeq v_{!+}(\cO_{\bbY_w}) .$$
\end{prop}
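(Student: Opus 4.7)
The plan is to verify that the data $(Y_w, P, X'_w)$ fits exactly into the axiomatic situation (S) of Subsection \ref{sec_compatibillity}, and then to invoke Theorem \ref{mainappli} directly, as anticipated in the text preceding the statement.

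First, I would check condition (i) of (S). The Bruhat cell $Y_w=BwB/B$ is, by the Bruhat decomposition applied to the split reductive $\fro$-group $G$ (cf.\ \cite[II, 13.3-13.6]{Jantzen}), isomorphic over $\fro$ to the affine space $\bbA^{\ell(w)}$ of dimension equal to the length of $w$. In particular $Y_w$ is affine and smooth over $\fro$. Next, condition (ii) is immediate: the composite $v: Y_w\hookrightarrow P=G/B$ is a locally closed immersion into the smooth projective $\fro$-scheme $P$, and by definition $X_w=\overline{Y_w}$ is the (reduced) Schubert scheme, with boundary $Z_w=X_w\setminus Y_w$ a union of lower-dimensional Schubert subvarieties.

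The most delicate point, and the one I expect to require the most care, is condition (iii): the existence of a smooth and projective $\fro$-scheme $X'_w$ with a surjective morphism $b: X'_w\to X_w$ restricting to an isomorphism above $Y_w$ and such that $Z'_w:=X'_w\setminus Y_w$ is a transversal normal crossings divisor. For this I would use the Bott--Samelson--Demazure resolution attached to a reduced expression $w=s_{i_1}\cdots s_{i_{\ell(w)}}$, constructed at the level of $\fro$-schemes in \cite[II, 13.6]{Jantzen}. This furnishes a smooth projective $\fro$-scheme $X'_w$ (an iterated $\bbP^1$-bundle over $\Spec\fro$) together with a birational morphism $b: X'_w\to X_w$ which is an isomorphism above the open Bruhat cell $Y_w$. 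The complement $Z'_w=b^{-1}(Z_w)$ is the union of the $\ell(w)$ smooth subvarieties obtained by dropping one of the factors $s_{i_j}$ from the product; these are smooth divisors meeting transversally both on the generic and on the special fibre, so $Z'_w$ is a relative normal crossings divisor on $X'_w$. In particular $Z'_w$ is transversal in the sense of Definition \ref{deftrans}.

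Once (i)--(iii) are verified, the pair $(Y_w,P,X'_w)$ lies in the axiomatic situation (S), and the formalism developed in Section \ref{sec_compatibillity} (in particular the commutative diagram of frames preceding Theorem \ref{mainappli}) applies verbatim. Theorem \ref{mainappli} then yields the desired canonical isomorphism
$$\sD^{\dagger}_{\cP}\ot_{\osD_{P_{\bbQ}}}\overline{v_{\bbQ!+}(\cO_{Y_{w\bbQ}})}\simeq v_{!+}(\cO_{\bbY_w}),$$
which completes the proof. No further ingredients are needed beyond checking the axioms, since the compatibility between algebraic and arithmetic intermediate extensions is already encoded in Theorem \ref{mainappli}.
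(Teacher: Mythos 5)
Your proposal follows essentially the same route as the paper: the paper's own "proof" of Prop.~\ref{app_flag} is precisely the remark, immediately preceding the statement, that the Demazure desingularization $X'_w\to X_w$ of the Schubert scheme places the triple $(Y_w,P,X'_w)$ in the axiomatic situation~(S), after which Theorem~\ref{mainappli} applies verbatim. You have simply unpacked that remark by verifying conditions (i)--(iii) of~(S) explicitly (Bruhat decomposition for affineness and smoothness of $Y_w\simeq\bbA^{\ell(w)}$, the immersion into $P$, and the Bott--Samelson construction with its boundary as a relative normal crossings divisor), which is exactly the content the paper delegates to the citation of \cite[II,~13.6]{Jantzen}.
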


Now consider the localization

$$ {\mathscr Loc} (L^{\dagger}(\lambda_w)) = \sD^{\dagger}_{\cP}\otimes_{D^{\dagger}(\GG)}  L(\lambda_w)^{\dagger}.$$
\begin{thm}\label{thm_final} One has a canonical isomorphism of $\sD^{\dagger}_{\cP}$-modules
$$ {\mathscr Loc} (L^{\dagger}(\lambda_w)) \simeq v_{!+}(\cO_{\bbY_w}).$$
The modules $L^{\dagger}(\lambda_w)$ are geometrically $F$-holonomic for all $w\in W$. 
\end{thm}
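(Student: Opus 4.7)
The plan is to deduce the desired isomorphism by combining the classical Beilinson--Bernstein localization on the generic fibre with Proposition~\ref{app_flag}, bridging the two intermediate extensions through the exact functor $\sD^{\dagger}_{\cP}\ot_{\osD_{P_{\bbQ}}}\overline{(-)}$ studied in Section~3. First I would recall the classical identification of $\sD_{P_{\bbQ}}$-modules
$${\rm Loc}(L(\lambda_w))=\sD_{P_{\bbQ}}\ot_{U(\frg)_{\theta_0}} L(\lambda_w)\;\simeq\; v_{\bbQ!+}(\cO_{Y_{w,\bbQ}}),$$
as recalled just before Proposition~\ref{app_flag}. Applying to both sides the composite of the exact bar functor $\overline{(-)}$ with the extension of scalars $\sD^{\dagger}_{\cP}\ot_{\osD_{P_{\bbQ}}}(-)$ (exact by flatness, Lemma~\ref{platitude}), Proposition~\ref{app_flag} identifies the right hand side with $v_{!+}(\cO_{\bbY_w})$. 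It then remains to establish a natural isomorphism
$$\sD^{\dagger}_{\cP}\ot_{\osD_{P_{\bbQ}}}\overline{\sD_{P_{\bbQ}}\ot_{U(\frg)_{\theta_0}} L(\lambda_w)}\;\simeq\;{\mathscr Loc}(L^{\dagger}(\lambda_w)).$$

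For the target, since $L(\lambda_w)$ has infinitesimal character $\theta_0$, one has $L^{\dagger}(\lambda_w)\simeq D^{\dagger}(\GG)_{\theta_0}\ot_{U(\frg)_{\theta_0}} L(\lambda_w)$, so associativity of the tensor product gives
$${\mathscr Loc}(L^{\dagger}(\lambda_w))=\sD^{\dagger}_{\cP}\ot_{D^{\dagger}(\GG)_{\theta_0}} L^{\dagger}(\lambda_w)\;\simeq\;\sD^{\dagger}_{\cP}\ot_{U(\frg)_{\theta_0}} L(\lambda_w).$$
Next I would show that the bar functor commutes with tensoring by a constant sheaf of modules over a constant sheaf of rings:
$$\overline{\sD_{P_{\bbQ}}\ot_{U(\frg)_{\theta_0}} L(\lambda_w)}\;\simeq\;\osD_{P_{\bbQ}}\ot_{U(\frg)_{\theta_0}} L(\lambda_w).$$
Together with the identification $\overline{\sD_{P_{\bbQ}}}=\osD_{P_{\bbQ}}$ and associativity of the tensor product over $\osD_{P_{\bbQ}}$, this finishes the required identification. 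The hard part will be this last commutation: I would unwind $\overline{(-)}=\alpha^{-1}j_{*}$ and exploit that $j$ is affine, so $j_{*}$ is exact on quasi-coherent sheaves and commutes with filtered colimits; writing $L(\lambda_w)=\varinjlim_i V_i$ as the filtered union of its finite-dimensional $K$-subspaces then reduces the commutation to the essentially tautological case of $V_i$ finite-dimensional.

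Finally, the geometrical $F$-holonomicity of $L^{\dagger}(\lambda_w)$ is an immediate consequence: the Bruhat cell $Y_w\simeq\bbA^{\ell(w)}_{\fro}$ is affine and smooth, its formal completion $\cY_w$ has connected (rigid-analytic) generic fibre (an open unit polydisk), and so by Proposition~\ref{constirred} the constant module $\cO_{\bbY_w}$ lies in $F\text{-}{\rm Isoc}^{\dagger\dagger}(\bbY_w/K)$. Its intermediate extension $v_{!+}(\cO_{\bbY_w})$ therefore belongs to $F\text{-Ovhol}(\bbP/K)$, which by Caro's stability theorem (cited in Section~\ref{sec_localizationflagvariety}) coincides with the category of holonomic $F$-$\sD^{\dagger}_{\cP}$-modules. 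Hence ${\mathscr Loc}(L^{\dagger}(\lambda_w))$ is holonomic with Frobenius structure, i.e.\ $L^{\dagger}(\lambda_w)$ is geometrically $F$-holonomic by definition.
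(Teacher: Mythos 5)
Your proposal follows essentially the same route as the paper: reduce by associativity of tensor products to the extension-of-scalars functor $\sD^{\dagger}_{\cP}\ot_{\osD_{P_{\bbQ}}}\overline{(-)}$, combine the classical Beilinson--Bernstein identification ${\rm Loc}(L(\lambda_w))\simeq v_{\bbQ!+}(\cO_{Y_{w,\bbQ}})$ with Proposition~\ref{app_flag}, and conclude holonomicity because $v_{!+}(\cO_{\bbY_w})$ lands in $F\text{-Ovhol}(\bbP/K)$. You merely flesh out two points the paper treats implicitly -- the commutation $\overline{\sD_{P_{\bbQ}}\ot_{U(\frg)}L}\simeq\osD_{P_{\bbQ}}\ot_{U(\frg)}L$ (via $j$ affine and filtered colimits) and the justification that $\cO_{\bbY_w}$ lies in $F\text{-}{\rm Isoc}^{\dagger\dagger}(\bbY_w/K)$ -- so the argument is sound and matches the paper's.
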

\begin{proof} We write $L(w)$ resp. $L^{\dagger}(w)$ for $L(\lambda_w)$ resp. $L^{\dagger}(\lambda_w)$.
Since $L^{\dagger}(w)= D^{\dagger}(\GG) \otimes_{U(\frg)} L(w)$, associativity of tensor products yields a canonical isomorphism 

$$\sD^{\dagger}_{\cP}\otimes_{D^{\dagger}(\GG)}  L(w)^{\dagger}\simeq \sD^{\dagger}_{\cP}\otimes_{U(\frg)}  L(w)
\simeq \sD^{\dagger}_{\cP} \otimes_{\osD_{P_\bbQ}} (\osD_{P_\bbQ}\otimes_{U(\frg)}  L(w)) \simeq \sD^{\dagger}_{\cP} \otimes_{\osD_{P_\bbQ}} \overline{{\rm Loc}(L(w))}.$$
Since ${\rm Loc}(L(w))\simeq v_{\bbQ!+}(\cO_{Y_{w\bbQ}})$, the asserted isomorphism follows now in combination with \ref{app_flag}.
Since $v_{!+}(\cO_{\bbY_w})$ is a holonomic  $F$-$\sD^{\dagger}_{\cP}$-module, the module $L^{\dagger}(w)$ is seen to be geometrically $F$-holonomic.
\end{proof}

\subsection{The ${\rm SL}_2$-case}

We suppose $G={\rm SL}_2$. We let $B$ be the subgroup of upper triangular matrices and $T\subset B$ be the subgroup of diagonal matrices. We identify 
$\Lambda=\bbZ$ so that $\Delta=\{\alpha\}$ with $\alpha=2$. We identify $$P=G/B=\bbP_{\fro}^1$$ with the projective line $\bbP_{\fro}^1$ over $\fro$. We choose an affine coordinate $t$ around zero. The group $G$ acts by fractional transformations 

$$\left(\begin{array}{rr}
a & b  \\
c & d \\
\end{array}\right).\; (t)= \left(\frac{at+b}{ct+d}\right)$$

\vskip5pt 
in the usual way. The stabiliser of the point $\infty\in\bbP_{\fro}^1$ is $B$.

\vskip5pt

We note that {\it any} irreducible $\sD^{\dagger}_{\cP}$-module is holonomic, since $\cP$ has dimension one. 
 In this setting, the theorem \ref{classRep}
amounts to a classification of {\it all} irreducible $F-D^{\dagger}(\GG)_{\theta_0}$-modules in terms of irreducible 
overconvergent $F$-isocrystals $\cE$ on couples $\bbY=(Y,X)$ where $Y$ is either :

\vskip5pt 

(1) a closed point of $\bbP_{k}^1$ or\\

 (2) an open complement of finitely many closed points $Z=\{y_1,...,y_n\}$ of $\bbP_{k}^1$.\\

 In case (1), the point is a complete invariant, since we have necessarily $\cE=\cO_{\bbY}$ in this case. Suppose that the point is $k$-rational.
 Since the (finitely many) $k$-rational points $\bbP^1(k)$ of $\bbP_{k}^1$ form a single orbit under the natural action of the (finite) group $G(k)$ of $k$-rational points of $G$, it suffices to consider the point 
 $$\{\infty\}=Y_{1,s}=X_{1,s}$$ in $\bbP_{k}^1$. According to \ref{thm_final}, the global sections of $v_{!+}(\cO_{\bbY})$ are 
 equal to the $D^{\dagger}(\GG)_{\theta_0}$-module $L^{\dagger}(-2)$, the crystalline version of the classical anti-dominant Verma module $M(-2)=L(-2)$.
 
Suppose now that the the point is $k'$-rational for a finite extension field $k'/k$. Let $M=H^0(\cP, v_{!+}(\cO_{\bbY}))$.
Let $\fro'$ be a finite extension of $\fro$ with residue field $k'$ and quotient field $K'$. The base change $M_{K'}=M \otimes_K K'$ has the same geometric parameter, but now considered a rational point of the special fibre of $\cP \times_{\fro} \fro'$. This means that $M$ is a twisted form of the module 
$L^{\dagger}(-2)$, with respect to the field extension $K'/K$. 

\vskip5pt

We come to case (2). For $Z=\varnothing$ and hence $Y=\bbP_{k}^1$ we obtain the trivial representation, i.e. the augmentation character $D^{\dagger}(\GG)\rightarrow K$. Indeed, there are no convergent $F$-isocrystals on $\cP$ besides the constant one, cf. prop. \ref{deJong}. Let $n>0$. Modulo the appearance of twisted forms (see the above argument), we may assume that all points $y_1,...,y_n$ are $k$-rational and $y_1=\infty$. 
There are then two extreme cases $$ Y=\bbA^1_k \hskip10pt \text{resp.} \hskip10pt Y= \bbP_{k}^1\setminus \bbP^1(k),$$
the affine line and so-called Drinfeld's upper half plane, respectively. 

\vskip5pt

We discuss an interesting example in the case $Y=\bbA^1_k$. For this, we assume that $K$ contains the $p$-th roots of unity $\mu_p$ and we choose an element $\pi\in\fro$ with ${\ord}_p (\pi) = 1/(p-1)$. We have the affine coordinate $t$ on $\bbA^1_{\fro}$ and we let $\partial=d/dt$.
We let ${\mathscr L}_{\varpi}$ be the coherent $\sD^{\dagger}_{\cP}$-module defined by the {\it Dwork overconvergent $F$-isocrystal} $L_{\pi}$ on $\bbY$ associated with $\pi$, i.e.  ${\mathscr L}_{\pi}=v_{!+}L_{\pi}$ where $v: \bbY\rightarrow\bbP$. Recall that the underlying $\cO_{\cP,\bbQ}$-module of ${\mathscr L}_{\pi}$ is $\cO_{\cP,\bbQ}(\infty)$, endowed with a compatible $\sD^{\dagger}_{\cP}$-module structure for which $\partial (1) = - \pi$,
 \cite[4.5.5]{BerthelotIntro}.
 
 \vskip5pt
 
 Write $ \frn = K.e$ with $e=\bigl( \begin{smallmatrix}
  0&1\\ 0&0
\end{smallmatrix} \bigr)$. Let $\eta: \frn \rightarrow K$ be a nonzero character and consider Kostant's {\it standard Whittaker module} 
 
 $$W_{\theta_0, \eta}:= U(\frg) \otimes_{ Z(\frg) \otimes U(\frn) } K_{\theta_0,\eta} $$

with character $\eta$ and central character $\theta_0$ 
\cite[(3.6.1)]{KostantWhittaker}. It is an irreducible $U(\frg)$-module \cite[Thm. 3.6.1]{KostantWhittaker}, but does not lie in $\cO_0$. 
In fact, the restriction of the $\sD_{\bbP_{K}^1}$-module ${\rm Loc}(W_{\theta_0,\eta})$ to $\bbA^1_K$ has an irregular singularity 
at $\infty$ \cite[4.4]{MilicicSoergelWhittaker}.

\vskip5pt
 
Let  $W^{\dagger}_{\theta_0,\eta}:=D^{\dagger}(\GG)\otimes_{U(\frg) } W_{\theta_0,\eta}$.

\begin{thm} \label{Dwork} Consider the character $\eta$ defined by $\eta(e):=\pi$. 
There is a canonical isomorphism $${\mathscr Loc} (W^{\dagger}_{\theta_0,\eta} ) \car {\mathscr L}_{\pi} $$
as left $\sD^{\dagger}_{\cP}$-modules. In particular, $W^{\dagger}_{\theta_0,\eta}$ is geometrically $F$-holonomic.
\end{thm}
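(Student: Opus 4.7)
The plan is as follows. By associativity of tensor product together with the definition $W^{\dagger}_{\theta_0,\eta} = D^{\dagger}(\GG)\otimes_{U(\frg)}W_{\theta_0,\eta}$, one has
$${\mathscr Loc}(W^{\dagger}_{\theta_0,\eta})\simeq \sD^{\dagger}_{\cP}\otimes_{U(\frg)}W_{\theta_0,\eta}.$$
Using Kostant's presentation $W_{\theta_0,\eta} = U(\frg)/I_\eta$, where $I_\eta$ is the left ideal generated by $\ker\theta_0\cap Z(\frg)$ and by $e-\pi$, combined with the fact from Theorem \ref{localizationthm} that $Z(\frg)$ acts on $\sD^{\dagger}_{\cP}$ through $\theta_0$, this simplifies to the explicit presentation
$${\mathscr Loc}(W^{\dagger}_{\theta_0,\eta})\simeq \sD^{\dagger}_{\cP}/\sD^{\dagger}_{\cP}(e-\pi).$$

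Next, I compute the image of $e\in\frg$ in global differential operators on $\cP$. Differentiating the $\cG$-action by M\"obius transformations at the identity, one finds that in the chosen affine coordinate $t$ on $\cU = \cP\setminus\{\infty\}$, the element $e$ acts as the vector field $-\partial/\partial t$. Consequently, on $\cU$ the defining relation $(e-\pi)\cdot 1 = 0$ becomes $(\partial+\pi)\cdot 1 = 0$, which is precisely the defining relation of the Dwork isocrystal $L_{\pi}|_\cU$. The canonical generator $\omega$ of ${\mathscr L}_\pi$, viewed via the inclusion ${\mathscr L}_\pi = v_{!+}L_\pi\hookrightarrow v_+L_\pi$, therefore satisfies $(e-\pi)\omega=0$ and, since $H^0(\cP,\sD^{\dagger}_{\cP})=D^{\dagger}(\GG)_{\theta_0}$ by \ref{localizationthm}, also $(z-\theta_0(z))\omega=0$ for every $z\in Z(\frg)$. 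The assignment $1\mapsto\omega$ thus produces a $D^{\dagger}(\GG)_{\theta_0}$-linear map $W^{\dagger}_{\theta_0,\eta}\to H^0(\cP,{\mathscr L}_\pi)$, which by Beilinson--Bernstein localization corresponds to a $\sD^{\dagger}_{\cP}$-linear morphism $\phi\colon {\mathscr Loc}(W^{\dagger}_{\theta_0,\eta})\to {\mathscr L}_\pi$, whose restriction to $\cU$ is the tautological identification $\sD^{\dagger}_\cU/\sD^{\dagger}_\cU(\partial+\pi)\simeq L_\pi|_\cU$.

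The remaining and hardest step is to globalize this local isomorphism. The target ${\mathscr L}_\pi = v_{!+}L_\pi$ is irreducible by the classification Theorem \ref{classification}, since $L_\pi$ is irreducible as an overconvergent $F$-isocrystal on $\bbY=(\bbA^1_k,\bbP^1_k)$; combined with $\phi(1)=\omega\neq 0$, this shows $\phi$ is surjective. For injectivity the plan is to show that ${\mathscr Loc}(W^{\dagger}_{\theta_0,\eta})$ already belongs to $F\text{-}{\rm Ovhol}(\bbY/K)$, i.e. that it carries a natural $\sD^{\dagger}_{\cP}(^{\dagger}\infty)$-structure. The reason to expect this is that in the coordinate $s=1/t$ at infinity the defining relation reads $s^2\partial_s - \pi = 0$, and since $\pi\in K^\times$ this forces the identity $[1] = s\cdot[\pi^{-1}s\partial_s]$ in the quotient, exhibiting $s$ as invertible on the cyclic generator. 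A conceptually cleaner and more robust route, which would also produce the isomorphism directly, is to establish a Dwork-coefficient variant of the scalar extension compatibility Theorem \ref{mainappli}: combined with the classical identification of the algebraic localization ${\rm Loc}(W_{\theta_0,\eta})$ on $P_{\bbQ}$ with the intermediate extension $j_{\bbQ!+}L_{\pi,\bbQ}$---a $\sD_{P_{\bbQ}}(*\infty)$-module since the Dwork connection has irregular singularity at infinity, whence $j_{\bbQ+}=j_{\bbQ!+}$---it would yield
$${\mathscr Loc}(W^{\dagger}_{\theta_0,\eta})\simeq \sD^{\dagger}_{\cP}\otimes_{\overline{\sD}_{P_{\bbQ}}}\overline{{\rm Loc}(W_{\theta_0,\eta})}\simeq v_{!+}L_\pi = {\mathscr L}_\pi.$$
Once the overconvergence at infinity is established, Lemma \ref{LemFond} applied to the frame $\bbY$ promotes $\phi|_\cU$ to a global isomorphism, and geometric $F$-holonomicity of $W^{\dagger}_{\theta_0,\eta}$ follows immediately since ${\mathscr L}_\pi\in F\text{-}{\rm Ovhol}(\bbP/K)$ by construction.
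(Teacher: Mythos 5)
Your first half is correct and matches the paper: by flatness/associativity, ${\mathscr Loc}(W^{\dagger}_{\theta_0,\eta})\simeq \sD^{\dagger}_{\cP}\otimes_{U(\frg)}W_{\theta_0,\eta}$, and since $\sD^{\dagger}_{\cP}$ already has central character $\theta_0$ while $e\mapsto -\partial$ under $U(\frg)\to\Gamma(\cP,\sD^{\dagger}_{\cP})$, you get the cyclic presentation $\sD^{\dagger}_{\cP}/\sD^{\dagger}_{\cP}(\partial+\pi)$. (You are in fact a bit more careful than the paper about the role of $\ker\theta_0$.)

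Where you go astray is the last step. The paper does not construct a morphism to ${\mathscr L}_\pi$ and then labor to promote it to an isomorphism; it simply invokes Berthelot's known \emph{cyclic presentation} of the Dwork module ${\mathscr L}_\pi\simeq\sD^{\dagger}_{\cP}/\sD^{\dagger}_{\cP}(\partial+\pi)$ (\cite[Prop.\ 5.2.3]{Berthelot_Trento}), so the two sides are literally identified and nothing more is needed. You missed that this presentation is already available, and instead attempt to reprove it. That route has genuine gaps: your ``ad hoc'' argument for overconvergence at $\infty$ only shows that $[1]$ lies in the image of multiplication by $s$ (hence that $s$ acts surjectively on the cyclic generator), which is far short of exhibiting a $\sD^{\dagger}_{\cP}(^{\dagger}\infty)$-module structure; and your ``cleaner'' alternative requires a Dwork-coefficient extension of Theorem \ref{mainappli} that is not proved in the paper and would need a nontrivial argument of its own, since \ref{mainappli} is only established for the constant coefficient $\cO_{Y_\bbQ}$. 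In short: the structure of your argument is sound up to the presentation, but the closing identification should be replaced by the citation to Berthelot's Prop.\ 5.2.3, which removes the entire problematic last stage.
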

\begin{proof}
The standard Whittaker module $W_{\theta_0, \eta}$ admits the presentation 
$$W_{\theta_0, \eta}= U(\frg)/ U(\frg) (e - \eta(e))= U(\frg)/ U(\frg) (e - \pi)$$ whence
$W^{\dagger}_{\theta_0,\eta}= D^{\dagger}(\GG) / D^{\dagger}(\GG) (e-\pi).$ The canonical morphism $U(\frg)\rightarrow \sD_{\bbP_{K}^1}$ maps $e$ to $-\partial$, cf. \cite[11.2.1]{Hotta}, and the 
isomorphism of part (b) in theorem \ref{localizationthm} is compatible with this morphism. We obtain

$${\mathscr Loc} (W^{\dagger}_{\theta_0,\eta}) =\sD^{\dagger}_{\cP}  / \sD^{\dagger}_{\cP} (\partial+\pi)$$

which coincides with the standard presentation of the $\sD^{\dagger}_{\cP}$-module ${\mathscr L}_{\pi}$ 
\cite[Prop. 5.2.3]{Berthelot_Trento}.
\end{proof} 

Remark: It is interesting to note that the Dwork isocrystal ${\mathscr L}_{\pi}$ is {\it algebraic} in the sense that it comes from 
an algebraic $\sD_{\bbP^1_K}$-module, namely ${\rm Loc}(W_{\theta_0,\eta})$, by extension of scalars 
$\overline{\sD}_{\bbP^1_K}\rightarrow \sD^{\dagger}_{\cP}$. 

\vskip5pt 

We discuss an example in the second case, where $Y= \bbP_k^1 \setminus \bbP^1(k)$. We identify $k=\bbF_q$.
We assume that $K$ contains the cyclic group $\mu_{q+1}$ of $(q+1)$-th roots of unity.
We consider the so-called {\it Drinfeld curve}

$$Y'=\Big\{(x,y) \in \bbA_k^2 \midc xy^q-x^qy=1\Big\}.$$

It is an affine smooth irreducible curve and the map  $(x,y)\mapsto [x:y]$ is an unramified Galois covering 

 $$u: Y' \longrightarrow Y$$
 
 with Galois group $\mu_{q+1}$. The group $\mu_{q+1}$ acts by homotheties $\zeta. (x,y)=(\zeta.x,\zeta.y)$. 
 We have a smooth projective compactification 

$$\overline{Y'}=\Big\{[x:y:z] \in \bbP_k^2 \midc xy^q-x^qy=z^{q+1}\Big\}$$ and the covering extends to a smooth (and tamely ramified) morphism 

$$u: \overline{Y'} \longrightarrow \bbP_k^1, $$

given by $[x:y:z]\mapsto [x:y].$ The boundary $Z'=\overline{Y'}\setminus Y'$ is mapped bijectively to $Z=\bbP^1(k)$ and the ramification index at each point in $Z$ is $q+1$. For more details the reader may consult \cite[chap. 2]{Bonnafe}. We denote by $u: \bbY'\rightarrow\bbY$ the morphism of couples induced by $u$.  We let $\cE=\bbR^{*}u_{\rm rig,*}\cO_{\bbY'}$ be the relative rigid cohomology sheaf which, in our situation, is just the direct image of $\cO_{\bbY}$ 
under the morphism $u$ endowed with the Gauss-Manin connection. 

\begin{prop} The relative rigid cohomology sheaf, as an overconvergent $F$-isocrystal on $\bbY$,
admits a decomposition $\cE=\oplus_{j=0,...,q} \cE(j)$, where $\cE(j)$ is the isotypic subspace (of rank one) on which 
$\mu_{q+1}$ acts by the character $\zeta\mapsto\zeta^{j}$. In particular, each pair $(Y,\cE(j))$ corresponds to an irreducible geometrically $F$-holonomic $D^{\dagger}(\GG)_{\theta_0}$-module $H^0(\cP, v_{!+}\cE(j))$.
\end{prop}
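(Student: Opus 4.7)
The starting point is the finite \'etale Galois covering $u\colon Y'\to Y$ with group $\mu_{q+1}$, which extends to a finite, flat, tamely ramified morphism $u\colon \overline{Y'}\to \bbP^1_k$. I would first focus on the decomposition at the level of isocrystals on $\bbY$. Since $u$ is finite \'etale on $Y$, the higher rigid direct images vanish and $\cE=u_{\mathrm{rig},*}\cO_{\bbY'}$ is locally free of rank $q+1$ as an $\cO_{Y,\bbQ}$-module (equal to the degree of $u$). The group $\mu_{q+1}$ acts on $\cE$ by functoriality, and because $K$ contains $\mu_{q+1}$ and $q+1$ is invertible in $K$, the orthogonal idempotents $e_{j}=\tfrac{1}{q+1}\sum_{\zeta\in\mu_{q+1}}\zeta^{-j}[\zeta]\in K[\mu_{q+1}]$ produce a direct sum decomposition $\cE=\oplus_{j=0}^{q}\cE(j)$, where $\cE(j)$ is the isotypic piece for the character $\zeta\mapsto\zeta^{j}$. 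Since the $\mu_{q+1}$-action is free on $Y'$, each isotypic piece $\cE(j)$ is locally free of rank one as an $\cO_{Y,\bbQ}$-module.

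The next, and main, step is to upgrade this decomposition from isocrystals on $Y$ to \emph{overconvergent} $F$-isocrystals on the couple $\bbY=(Y,\bbP^1_k)$. This is exactly the point at which the tame ramification of the compactified covering $u\colon \overline{Y'}\to\bbP^1_k$ along $Z'=\overline{Y'}\setminus Y'$ (with ramification index $q+1$) has to be handled. I would invoke the results of Grosse-Kl\"onne \cite{GK_DL}, where the overconvergence and Frobenius structure of $\cE$, as well as its decomposition under the cyclic action $\mu_{q+1}$, are established in precisely this Drinfeld curve setting. The projectors $e_j$, being $K$-linear combinations of automorphisms of the couple $\bbY'$ over $\bbY$, commute with the Frobenius structure and respect overconvergence, so the decomposition $\cE=\oplus_{j}\cE(j)$ takes place inside $F\text{-}\mathrm{Isoc}^{\dagger\dagger}(\bbY/K)$. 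The hard part is exactly this compatibility check: one must verify that the tame descent along the boundary preserves overconvergence after passing to isotypic summands, and for this one really uses the explicit geometry of the Drinfeld curve compactification as in \cite{GK_DL}.

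Once the $\cE(j)$ are known to be rank-one overconvergent $F$-isocrystals on the connected smooth variety $Y$, their irreducibility is automatic: any nonzero sub-$F$-isocrystal of a rank-one object on a connected base is the whole object, because its generic rank is either $0$ or $1$. Hence each pair $(Y,\cE(j))$ falls into the parameter set of Theorem~\ref{classRep}. Applying that theorem yields that $H^{0}(\cP,\cL(Y,\cE(j)))=H^{0}(\cP,v_{!+}\cE(j))$ is an irreducible geometrically $F$-holonomic $D^{\dagger}(\GG)_{\theta_0}$-module, giving the claim. Finally, the pairs $(Y,\cE(j))$ for $j=0,\ldots,q$ are pairwise inequivalent (they have the same support $Y$ but pairwise non-isomorphic restrictions to any open dense subset, since the $\mu_{q+1}$-characters are distinct), so the $q+1$ resulting modules are pairwise non-isomorphic.
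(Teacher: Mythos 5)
Your proposal is correct and follows the same route as the paper: both arguments ultimately delegate the substantive content — overconvergence, Frobenius structure, and the $\mu_{q+1}$-equivariant decomposition of the direct image along the tamely ramified compactified covering — to Grosse-Kl\"onne's construction in \cite[sec.\ 2, sec.\ 4]{GK_DL}, after which the application of Theorem \ref{classRep} is formal. Your added detail on the idempotent decomposition over $K[\mu_{q+1}]$, the rank-one count via freeness of the covering, and the pairwise inequivalence of the $(Y,\cE(j))$ is accurate and usefully fleshes out what the paper leaves implicit.
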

\begin{proof}
The cover $u: Y' \rightarrow Y$ is an abelian prime-to-$p$ Galois covering as considered in \cite{GK_DL}. 
The relative rigid cohomology, as an overconvergent $F$-isocrystal on the base $Y$ (denoted there by $E^{\dagger}$) together with its decomposition 
$E^{\dagger}=\oplus_j E^{\dagger}(j)$ is constructed in \cite[sec. 2]{GK_DL}. Note that $u: Y' \rightarrow Y$ is even equal to (one of the $q-1$ connected components of) the Deligne-Lusztig torsor for the nonsplit torus $\mu_{q+1}$ in the finite group $G(\bbF_q)$, a special situation considered in \cite[sec. 4]{GK_DL}.
\end{proof} 
 
Are the modules $H^0(\cP, v_{!+}\cE(j))$ {\it algebraic} in the sense that they arise from irreducible $U(\frg)$-modules, by extension of scalars 
$U(\frg)\rightarrow D^{\dagger}(\GG)$? Let us remark that the {\it th\'eor\`eme d'alg\'ebrisation} of Christol-Mebkhout \cite[thm. 5.0-10]{CtMeIV} implies that any overconvergent $F$-isocrystal on the open $Y$ is algebraic, i.e. comes from an algebraic connection on a characteristic zero lift of $Y$.
However, this does not imply (at least a priori) that the intermediate extensions preserve this algebraicity. To our knowledge, the most general result in this direction at the moment is our theorem \ref{mainappli} above. 
\vskip5pt 
If the modules $H^0(\cP, v_{!+}\cE(j))$ are algebraic, to which class do they belong? We recall that irreducible $U(\frg)$-modules fall into three classes: highest weight modules, Whittaker modules and a third class whose objects (with a fixed central character) are in bijective correspondence with similarity classes of irreducible elements of a certain localization of the first
Weyl algebra \cite{Block}. We plan to come back to these question in future work. 

\vskip5pt

We finish this paper with the remark, still in the case (2), that if we concentrate on the subcategory of 
overconvergent $F$-isocrystals on $Y=\bbP_1^k \setminus Z$ 
which are {\it unit-root}, then work of Tsuzuki \cite[Thm. 7.2.3]{TsuzukiFinite} shows that 
this category is equivalent to the category of $p$-adic representations of the \'etale fundamental group $\pi^{et}_1(Y)$ with finite monodromy
(i.e. representations such that for each  $y\in Z$ the inertia subgroup at $y$ acts through a finite quotient). Of course, 
the trivial representation corresponds to the constant isocrystal $\cO_{\bbY}$. 

\bibliographystyle{plain}
\bibliography{mybib}

\end{document}